\DeclareMathOperator{\rng}{\mathrm{rng}}
\DeclareMathOperator{\supp}{\mathrm{supp}}
\newcommand{\MAlg}{\mathrm{MAlg}}
\newcommand{\Aut}{\mathrm{Aut}}
\newcommand{\ERG}{\mathrm{ERG}}
\newcommand{\WMIX}{\mathrm{WMIX}}
\newcommand{\APER}{\mathrm{APER}}
\newcommand{\R}{\mathbb R}
\newcommand{\N}{\mathbb N}
\newcommand{\Q}{\mathbb Q}
\newcommand{\Z}{\mathbb Z}
\newcommand{\C}{\mathbb C}
\newcommand{\LL}{\mathrm L}
\newcommand{\abs}[1]{\left\lvert #1\right\rvert}
\newcommand{\dom}{\mathrm{dom}}
\newcommand{\inv}{^{-1}}
\renewcommand{\leq}{\leqslant}
\renewcommand{\geq}{\geqslant}
\renewcommand{\d}{\normalfont\textsf{d}}
\newcommand{\into}{\hookrightarrow}
\newtheorem{thm}{Theorem}[section]
\newtheorem{cor}[thm]{Corollary}
\newtheorem{lem}[thm]{Lemma}
\newtheorem{prop}[thm]{Proposition}
\theoremstyle{definition}
\newtheorem{claim}{Claim}
\newtheorem*{claim*}{Claim}
\newenvironment{cproof}{\begin{proof}[Proof of the 
		claim]}{\end{proof}}
\newtheorem{qu}[thm]{Question}
\newtheorem{df}[thm]{Definition}
\newtheorem{rmq}[thm]{Remark}
\newtheorem{example}[thm]{Example}
\newtheorem*{convention}{Convention}
\newcommand{\ph}{\varphi}
\newcommand{\eps}{\varepsilon}
\renewcommand{\d}{\boldsymbol{d}}
\newcommand{\id}{\mathrm{id}}
\newcommand{\defin}[1]{\textbf{\textit{#1}}}
\title{Belinskaya's theorem is optimal}
\author{Alessandro Carderi, Matthieu Joseph, François Le Maître, Romain Tessera}
\begin{document}
	
	\newcommand{\Addresses}{{
			\bigskip
			\footnotesize
			
			\noindent A.~Carderi, \textsc{Institut für Algebra und Geometrie, KIT, 76128 Karlsruhe, GERMANY}\par\nopagebreak\noindent
			\textit{E-mail address: }\texttt{alessandro.carderi@kit.edu}
			
			\medskip
			
			\noindent M.~Joseph, \textsc{Université de Lyon, ENS de Lyon, Unité de Mathématiques Pures et Appliquées, 46,
				allée d’Italie 69364 Lyon Cedex 07, FRANCE}\par\nopagebreak\noindent
			\textit{E-mail address: }\texttt{matthieu.joseph@ens-lyon.fr}
			
			\medskip
			
			\noindent F.~Le Maître, \textsc{Établissement public expérimental – Décret N°2019-209 du 20 mars 2019, Sorbonne Université, CNRS, Institut de Mathématiques
				de Jussieu-Paris Rive Gauche, F-75013 Paris, FRANCE}\par\nopagebreak\noindent
			\textit{E-mail address: }\texttt{francois.le-maitre@imj-prg.fr}
			
			\medskip
			
			\noindent R.~Tessera, \textsc{CNRS, Établissement public expérimental – Décret N°2019-209 du 20 mars 2019, Sorbonne Université,  Institut de Mathématiques
				de Jussieu-Paris Rive Gauche, F-75013 Paris, FRANCE}\par\nopagebreak\noindent
			\textit{E-mail address: }\texttt{romatessera@gmail.com}

			\medskip

	}}

	\maketitle
	\abstract{
	Belinskaya's theorem states that given an ergodic measure-preserving transformation, any other transformation with the same orbits and an $\mathrm{L}^1$ cocycle must be flip-conjugate to it. Our main result shows that this theorem is optimal: for all $p<1$ the integrability condition on the cocycle cannot be relaxed to being in $\mathrm{L}^p$.
	This also allows us to answer a question of Kerr and Li: for ergodic measure-preserving transformations, Shannon orbit equivalence  doesn't boil down to flip-conjugacy.}

	\tableofcontents
	
	\listoffixmes
	\section{Introduction}
	
	Given two ergodic measure-preserving (invertible) transformations $T_1,T_2$ of a standard probability space $(X,\mu)$, the \emph{conjugacy problem} asks whether there is a third measure-preserving invertible transformation $S$ such that $ST_1=T_2S$. 
	Although the conjugacy problem is intractable in full generality, various invariants have been devised over the years. Two of the most important ones are the \emph{spectrum} and the \emph{dynamical entropy}. 
	The first completely classifies compact transformations \cite{halmosOperatorMethodsClassical1942}, while the second completely classifies Bernoulli shifts \cite{sinaiNotionEntropyDynamical1959,ornsteinBernoulliShiftsSame1970}.
	
	In this paper, we are interested in natural weakenings of the conjugacy problem obtained through the notion of \emph{orbit equivalence}.
	Two measure-preserving transformations $T_1$, $T_2$ are \defin{orbit equivalent} if there is a measure-preserving transformation $S$ such that $ST_1S\inv$ and $T_2$ have the same orbits (such an $S$ is called an orbit equivalence between $T_1$ and $T_2$).
	A stunning theorem of Dye states that all ergodic measure-preserving transformations of a standard probability space are orbit equivalent \cite{dyeGroupsMeasurePreserving1959}, so  orbit equivalence for measure-preserving ergodic transformations is a weakening of conjugacy which turns out to be the trivial relation. 
	
	In order to circumvent this indistinguishability, we will compare orbit equivalences between measure-preserving transformations in a quantitative way. This fits into the emerging field of \textit{quantitative orbit equivalence} for group actions. One of its tacit aims is to capture meaningful geometric invariants, such as F\o{}lner functions \cite{delabieQuantitativeMeasureEquivalence2020}, growth rates \cite{austinIntegrableMeasureEquivalence2016}, etc., or ergodic theoretic invariants, such as dynamical entropy \cite{austinBehaviourEntropyBounded2016}. 

	In our setup of measure-preserving transformations, quantifications will be imposed on \emph{orbit equivalence cocycles}. Given an orbit equivalence $S$ between two ergodic measure-preserving transformations $T_1$ and $T_2$, the orbit equivalence cocycles $c_1,c_2:X\to\Z$ are the maps uniquely defined by the following equation: for all $x\in X$
	
	\begin{equation}\label{eq: c1 et c2}
		ST_1(x)=T_2^{c_2(x)}S(x)\text{ and }T_2S(x)=ST_1^{c_1(x)}(x).
	\end{equation}
	
	Belinskaya's theorem is probably the first result on quantitative orbit equivalence.  
	In the litterature, it is often stated as a symmetric result on integrable orbit equivalence of ergodic measure-preserving transformations. 
	However, her result is asymmetric and can be stated as follows.
	
	\begin{thm}[{Belinskaya \cite{belinskayaPartitionsLebesguespace1968}}]\label{thmIntro:BelinskayaAssym}
		Let $T_1$ and $T_2$ be two ergodic measure-preserving transformations, let $S$ be an orbit equivalence between them and suppose that the previously defined cocycle $c_1$ is integrable, 
		i.e.\ \[\int_X\abs{c_1(x)}d\mu<+\infty.\] Then $T_1$ and $T_2$ are flip-conjugate: either $T_1$ is conjugate to $T_2$ or $T_1\inv$ is conjugate to $T_2$.
	\end{thm}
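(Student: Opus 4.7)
The plan is to reduce to the case where $T_1$ and $T_2$ share all orbits, and then analyze the integer-valued cocycle $c_1$ via Birkhoff's ergodic theorem together with a combinatorial construction of a transfer function. Replacing $T_2$ by its conjugate $T := S^{-1} T_2 S$ (which leaves flip-conjugacy unchanged), we may assume that $T$ and $T_1$ have the same orbits and $T(x) = T_1^{c(x)}(x)$, where $c := c_1 \in \LL^1(X,\mu)$. Iterating yields $T^n(x) = T_1^{c_n(x)}(x)$ for $c_n$ the Birkhoff sum of $c$ along $T$; and since $T$ and $T_1$ act freely a.e.\ on the same orbits, for a.e.\ $x$ the map $k \mapsto c_k(x)$ is a bijection $\Z \to \Z$.

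Next, Birkhoff's theorem applied to the ergodic transformations $T$ and $T^{-1}$ gives $c_k(x)/k \to \alpha := \int_X c\, d\mu$ as $k \to \pm\infty$ for a.e.\ $x$. Combining this with the bijection property forces $|\alpha| = 1$ by a pigeonhole: the $2n+1$ distinct values $c_{-n}(x),\dots,c_n(x)$ lie asymptotically in $[-|\alpha|n,|\alpha|n]$ (forcing $|\alpha| \geq 1$), while the preimages under $k \mapsto c_k(x)$ of $[-n,n]$ lie asymptotically in $[-n/|\alpha|, n/|\alpha|]$ (forcing $|\alpha| \leq 1$). Up to replacing $T$ by $T^{-1}$---which accounts for the flip in the conclusion---we assume $\alpha = 1$, so it suffices to prove that $T$ and $T_1$ are conjugate.

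The heart of the proof is the construction of a measurable transfer function $g : X \to \Z$ satisfying the coboundary equation $g \circ T - g = 1 - c$. A natural candidate is
\[
g(x) := \#\{k \leq -1 : c_k(x) \geq 1\} \;-\; \#\{k \geq 1 : c_k(x) \leq -1\}.
\]
Both counts are a.e.\ finite because the ergodic limit $\alpha = 1$ forces $c_k(x) \to \pm\infty$ as $k \to \pm\infty$. The coboundary identity is then checked by a direct counting argument: using the cocycle relation $c_k(Tx) = c_{k+1}(x) - c(x)$ and the bijectivity of $k \mapsto c_k(x)$ (which fixes $0$ and sends $1$ to $c(x)$), the difference $g(Tx) - g(x)$ reduces to a signed cardinality of values attained by the bijection on a finite range of integers determined by $c(x)$, and bijectivity forces this signed count to equal exactly $1 - c(x)$.

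Finally, set $\phi(x) := T_1^{g(x)}(x)$. The coboundary relation yields $\phi \circ T = T_1 \circ \phi$, and on each orbit $\phi$ is a bijection since it intertwines two free $\Z$-actions sharing the same orbit. Aperiodicity of $T_1$ ensures that the sets $T_1^n(\{g = n\})$ are pairwise disjoint a.e., whence $\phi_*\mu = \mu$; so $\phi$ is a measure-preserving conjugacy from $T$ to $T_1$, yielding the flip-conjugacy of $T_1$ and $T_2$. The main obstacle is the combinatorial verification of the coboundary identity in the third step; this is precisely where $\LL^1$-integrability enters (via Birkhoff), anticipating the paper's main result that this hypothesis cannot be relaxed to $\LL^p$ for any $p < 1$.
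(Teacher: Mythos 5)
Your proof is correct, and it is worth saying precisely where it coincides with and where it departs from the paper's argument (given in the appendix, following Katznelson). The conjugating map is ultimately the same object: the paper proves that if the positive $T_1$- and $T$-orbits of a.e.\ point have finite symmetric difference then the transformations are conjugate via $x\mapsto T_1^{j(x)}(x)$, where $j(x)$ is the unique integer balancing $\lvert T_1^{\N+j}(x)\setminus T^{\N}(x)\rvert$ against $\lvert T^{\N}(x)\setminus T_1^{\N+j}(x)\rvert$; unwinding this through the bijection $k\mapsto c_k(x)$ shows that $j$ is exactly your transfer function $g$, and the paper's ``remove one point from each set without disturbing the balance'' step is a compressed form of your coboundary identity $g\circ T-g=1-c$ (your verification is only sketched, but the case analysis over $c(x)>0$ and $c(x)<0$ does close up, with $c(x)=0$ occurring only on a null set by aperiodicity). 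The genuine divergence is in how integrability of $c$ is spent to prove that $c_k(x)\to\pm\infty$ with coherent signs: the paper applies the mass-transport principle (a Tonelli computation over the orbit equivalence relation) to the indicator of ``$y$ lies between $x$ and $T(x)$ in the $T_1$-order'', then uses ergodicity to fix the sign, whereas you invoke Birkhoff's pointwise ergodic theorem to get the drift $\alpha=\int_X c\,d\mu$ and the bijectivity of $k\mapsto c_k(x)$ to pin down $\lvert\alpha\rvert=1$. Your route uses a heavier theorem but yields more, namely the explicit value of the index $\int_X c\,d\mu=\pm1$, which the paper only records elsewhere as a feature of $\LL^1$ full groups; note that for constructing $g$ you only need $\alpha\neq0$, so the upper bound $\lvert\alpha\rvert\leq 1$ is a bonus rather than a prerequisite. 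Two small points of hygiene: the pairwise disjointness of the sets $T_1^n(\{g=n\}\cap B)$ used for $\phi_*\mu=\mu$ follows from the injectivity of $\phi$ that you already established orbitwise, not from aperiodicity of $T_1$ alone; and when $\alpha=-1$ you should make explicit that the whole construction is rerun for $T^{-1}$ (whose cocycle $k\mapsto c_{-k}(x)$ has drift $+1$), producing a conjugacy of $T_2$ with $T_1^{-1}$, which is exactly the flip case of the statement.
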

	
	It is natural to wonder whether Belinskaya's theorem remains valid if one weakens the integrability assumptions. For example, one could ask that one of the orbit equivalence cocycle belongs to $\LL^p(X,\mu)$ for some $p\in (0,1)$.
	
    We will consider more general integrability assumptions. Given a function $\ph\colon\R_+\to\R_+$, we say that a measurable integer-valued function $f$ is $\ph$-\defin{integrable} if \[\int_X\varphi(\abs{f(x)})d\mu<+\infty.\]
	Our first main result concerns orbit equivalence of measure-preserving transformations for which one of the orbit equivalence cocycles is $\ph$-integrable, for some \defin{sublinear} function $\ph$, that is satisfying $\lim_{t\to+\infty} \varphi(t)/t=0$. This is for example the case for $\ph(t)=t^p$ where $p\in (0,1)$. With this integrability condition, the conclusion of Belinskaya's theorem does not hold.
	
	\begin{thm}[{see Theorem \ref{thm:genericcounterexamples}}]\label{thmIntro:belinskaya opti asym}
		Let $\varphi:\R_+\to \R_+$ be a sublinear function and $T_1$ be an ergodic measure-preserving transformation. Then there is an ergodic measure-preserving transformation $T_2$ and an orbit equivalence $S$ between $T_1$ and $T_2$ such that the associated cocycle $c_1$ is $\varphi$-integrable but the transformations $T_1$ and $T_2$ are not flip-conjugate.
	\end{thm}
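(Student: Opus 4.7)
My plan is to produce $T_2$ by a Baire-category argument inside a Polish group of $\varphi$-integrable perturbations of $T_1$. Let $\mathcal R = \mathcal R_{T_1}$ be the orbit equivalence relation of $T_1$ and $[\mathcal R]$ its full group. For every aperiodic $T \in [\mathcal R]$ let $c_T \colon X \to \Z$ be defined by $T(x) = T_1^{c_T(x)}(x)$, and set
\[
	[\mathcal R]_\varphi := \bigl\{ T \in [\mathcal R] : c_T \text{ is } \varphi\text{-integrable} \bigr\}.
\]
The first step is to equip this subgroup with a Polish topology built from a metric of the form $d_\varphi(T,T') = \int \varphi(|c_T - c_{T'}|)\, d\mu$, appropriately truncated and combined with the uniform metric on $\Aut(X,\mu)$. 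Standard Halmos-style arguments then show that ergodicity is a $G_\delta$ condition in this topology.

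The crucial step is to prove that ergodic transformations are dense in $[\mathcal R]_\varphi$. Given $T$ in this space and $\eps > 0$, I would apply Rokhlin's lemma to $T_1$ to produce a tall tower of height $h$ with small error, and modify $T$ by a local perturbation supported on a set of measure $O(1/h)$ which inserts a long jump (of $|c|$ of order $h$) tying together different levels of the tower. The $d_\varphi$-cost of this perturbation is of order $\varphi(h)/h$, and the sublinearity $\varphi(t)/t \to 0$ makes this arbitrarily small for $h$ large. A dovetailing of such moves, each progressively mixing more of $X$, converges in $d_\varphi$ to an ergodic element arbitrarily close to $T$.

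Once density is established, the final step is to show that the flip-conjugacy class of $T_1$ is meager in $[\mathcal R]_\varphi$. Since $[\mathcal R]_\varphi$ acts by conjugation with dense orbits on its ergodic part, a zero–one dichotomy in the spirit of the Halmos–Rokhlin conjugacy lemma implies that no flip-conjugacy class can be non-meager unless it is comeager; comeagerness is then ruled out by producing, via the density step, two $\varphi$-integrable ergodic elements of $[\mathcal R]_\varphi$ with distinct dynamical entropies, hence in different flip-conjugacy classes. Baire category supplies an ergodic $T_2 \in [\mathcal R]_\varphi$ outside the flip-conjugacy class of $T_1$, and taking $S = \id$ yields an orbit equivalence whose cocycle $c_1 = c_{T_2}$ is $\varphi$-integrable.

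The main obstacle is the density step: producing $\varphi$-integrable perturbations that simultaneously enforce ergodicity and have vanishing $d_\varphi$-cost requires careful Rokhlin-tower bookkeeping, and it is precisely the hypothesis $\varphi(t)/t \to 0$ that enables the trade-off between the size of the jumps and the measure of their support. This trade-off collapses in the linear regime of Belinskaya's theorem, which is exactly why the sublinear condition is sharp.
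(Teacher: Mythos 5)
Your overall architecture (Polish group $[\mathcal R]_\varphi$, ergodicity is $G_\delta$, density via perturbations of cost $\varphi(h)\cdot O(1/h)\to 0$) matches the paper's, and the trade-off you isolate between jump size and support measure is exactly the mechanism behind the paper's Lemma \ref{lem.distanceinduiteaid} and Corollary \ref{cor.distanceinduiteaid}. But your final step --- showing the flip-conjugacy class of $T_1$ is meager --- has two genuine gaps. First, the zero--one dichotomy you invoke requires the conjugation action of $[\mathcal R]_\varphi$ on its ergodic part to be topologically transitive for the $\d_{\varphi,T_1}$-topology. This is a Halmos/Dye-type statement \emph{inside} a $\varphi$-integrable full group, which is not known, is much stronger than anything the paper proves, and is nowhere justified in your sketch. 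Second, your way of ruling out comeagerness --- exhibiting two ergodic elements of $[\mathcal R]_\varphi$ with distinct dynamical entropies --- does not follow from the density step, and in fact fails outright in some cases: if $\log(1+t)=O(\varphi(t))$ (e.g.\ $\varphi(t)=\log(1+t)$) and $T_1$ has infinite entropy, then by Theorem \ref{thm.intOEimpliesShannonOE} every element of $[T_1]_\varphi$ has a cocycle of finite Shannon entropy, and Proposition \ref{prop.finiteness entropy preserved} forces \emph{every} such element to have infinite entropy, so entropy cannot separate anything. The paper avoids both problems by using weak mixing as the separating invariant: a Baire argument (resting on Conze's theorem about first return maps and the sublinear ergodic theorem) shows that weakly mixing elements with the same orbits as $T_1$ are a dense $G_\delta$ in $\APER\cap[T_1]_\varphi$; this settles the case where $T_1$ is \emph{not} weakly mixing, and the complementary case (where all powers of $T_1$ are ergodic) is handled by the explicit $n$-cycle construction of Section \ref{sec.belinskaya} via Corollary \ref{cor.belinskayaoptimal}. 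Your proposal has no analogue of this case split and no workable substitute for the separating invariant.

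A smaller but real gap: taking $S=\id$ requires $T_2$ to have the \emph{same orbits} as $T_1$, not merely to be an ergodic element of $[T_1]_\varphi$. These are not the same condition --- for instance $T_1^2$ lies in $[T_1]_\varphi$ and is ergodic whenever $T_1^2$ is ergodic, yet its orbits are strictly smaller. The paper handles this with Proposition \ref{prop.memeorbitesGdeltadense}, showing that elements with the same orbits as $T_1$ form a dense $G_\delta$ (this is where Proposition \ref{prop. cheap return}, hence the sublinear ergodic theorem, enters); you would need to add this set to your intersection of comeager conditions.
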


	The fact that the hypotheses on $\ph$ are fairly weak gives us much freedom. For example, the above theorem even implies that Belinskaya's theorem does not hold if we assume that one of the two orbit equivalence cocycles belongs to $\LL^p(X,\mu)$ for \emph{all} $p\in (0,1)$. Indeed if we consider for instance the sublinear function $\varphi(t)=t/\ln(t+1)$, then $\varphi$-integrability implies being in $\LL^p(X,\mu)$ for all $p<1$.

	A symmetric way to strengthen Theorem \ref{thmIntro:belinskaya opti asym} involves the concept of $\ph$-integrable orbit equivalence. We say that two measure-preserving transformations are \defin{$\varphi$-integrably orbit equivalent} if there is an orbit equivalence $S$ such that \emph{both} orbit equivalence cocycles $c_1$ and $c_2$ are $\varphi$-integrable. In this context, we obtain a similar conclusion to Theorem \ref{thmIntro:belinskaya opti asym}, but we have to make one additional assumption on $T_1$.

	\begin{thm}[{see Corollary \ref{cor.belinskayaoptimal}}]\label{thmIntro:belinskaya opti sym}
		Let $\ph :\R_+\to\R_+$ be a sublinear function. Let $T_1$ be an ergodic measure-preserving transformation and assume that $(T_1)^n$ is ergodic for some $n\geq 2$. Then there is another ergodic measure-preserving transformation $T_2$ such that $T_1$ and $T_2$ are $\varphi$-integrably orbit equivalent but not flip-conjugate.
	\end{thm}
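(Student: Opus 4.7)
The plan is to derive this symmetric corollary from the asymmetric Theorem~\ref{thmIntro:belinskaya opti asym} via a lifting construction that exploits the ergodicity of $T_1^n$. I would first apply the asymmetric theorem to the ergodic transformation $T_1^n$ with a rescaled sublinear function $\psi(t):=\varphi(Cnt)$ (where $C$ is an absolute constant, so $\psi$ remains sublinear): this yields an ergodic transformation $R$ on $(X,\mu)$, orbit-equivalent to $T_1^n$ with forward cocycle $c_1^R$ being $\psi$-integrable and $R$ not flip-conjugate to $T_1^n$. After conjugating away the orbit equivalence, I may assume that $R\in[\mathcal R_{T_1^n}]\subseteq [\mathcal R_{T_1}]$, i.e.\ $R(x)$ lies in the $T_1^n$-orbit of $x$ for almost every $x$.

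Next, I would lift $R$ to a transformation $T_2$ orbit-equivalent to $T_1$ via a Rokhlin-tower construction for $T_1^n$ (approximate towers of arbitrarily large height exist by ergodicity of $T_1^n$): inside each column of such a tower, $T_2$ agrees with $T_1$ on all but the top level, and at the top level it ``wraps around'' via $R$. Assembling over a nested sequence of finer towers exhausting $X$ gives a globally defined $T_2$ whose $n$-th iterate essentially recovers $R$ and which has the same orbits as $T_1$. The forward cocycle $c_1$ of $(T_1,T_2)$ then satisfies $\abs{c_1}\le n\abs{c_1^R}+O(n)$ on a set of small measure (the wrapping part) and $\abs{c_1}=1$ elsewhere, so $c_1$ is $\varphi$-integrable. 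Non-flip-conjugacy of $T_1$ and $T_2$ follows because any flip-conjugacy would induce one between $T_1^n$ and $T_2^n$, contradicting the choice of $R$.

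The main obstacle is controlling the reverse cocycle $c_2$ of $(T_1,T_2)$: the analogous estimate yields $\abs{c_2}\le n\abs{c_2^R}+O(n)$, which requires $c_2^R$ to be $\psi$-integrable as well---something the asymmetric theorem does not directly guarantee. To close this gap, I expect one needs a symmetric enhancement of Theorem~\ref{thmIntro:belinskaya opti asym}: under the hypothesis $T_1^n$ ergodic, the set of transformations $R\in[\mathcal R_{T_1^n}]$ for which \emph{both} cocycles $c_1^R, c_2^R$ are $\psi$-integrable and $R$ is not flip-conjugate to $T_1^n$ should be generic, in a Baire-category sense, within a suitable Polish subspace of $[\mathcal R_{T_1^n}]$ (this is presumably the content of Theorem~\ref{thm:genericcounterexamples}). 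Establishing this genericity---by approximating arbitrary elements of that Polish space by transformations whose cocycles are controlled in both directions, using the extra room provided by ergodicity of $T_1^n$---is the technical heart of the argument; once in place, the lifting construction above delivers the desired $T_2$.
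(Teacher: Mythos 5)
Your proposal has a genuine gap, and it also inverts the paper's logical structure. In the paper, Theorem \ref{thmIntro:belinskaya opti asym} is \emph{deduced from} the present statement (the case where some power of $T_1$ is ergodic is handled by this theorem, and only the remaining case uses Baire category), so deriving the present statement from Theorem \ref{thmIntro:belinskaya opti asym} would be circular. More importantly, the obstacle you yourself flag --- controlling the reverse cocycle --- is the actual heart of the matter, and the ``symmetric enhancement'' you hope for does not exist in the paper: Theorem \ref{thm:genericcounterexamples} \emph{is} the asymmetric statement, and the genericity results of Section \ref{sub.aper} (Theorem \ref{thm.gdelta dense weakly mixing}) only control the cocycle of $T_2$ over $T_1$, never both at once. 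Remark \ref{rmk.not eq rel} even cautions that $\varphi$-integrability of cocycles is not expected to behave symmetrically in general, so this is not a detail one can wave away.

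The paper's actual route is constructive and does not pass through the asymmetric theorem at all. After reducing to metric-compatible $\varphi$ (Lemma \ref{lem.free metric-compatible}), one builds an $n$-cycle $P\in[T_1]_\varphi$ with full support (Theorem \ref{thm.existencecycle}): Rokhlin's lemma produces pre-cycles induced by $T_1$ on sets of rapidly decreasing measure, and sublinearity together with Kac's lemma (Lemma \ref{lem.distanceinduiteaid}) makes the sum of their $\d_{\varphi,T_1}$-costs converge. Setting $T_2:=(T_1)_D P$ for a fundamental domain $D$ of $P$, Lemma \ref{lem.sameorbits} gives that $T_1$ and $T_2$ have the same orbits and that $(T_2)^n$ preserves $D$, hence is not ergodic --- which already rules out flip-conjugacy with $T_1$, since $(T_1)^n$ is ergodic; there is no need to compare $(T_2)^n$ with a prescribed model $R$, and your argument that a flip-conjugacy of $T_1$ with $T_2$ would contradict ``the choice of $R$'' is unjustified as stated, because $(T_2)^n$ is not $R$. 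Finally, the $\varphi$-integrability of the reverse cocycle is obtained by a direct pointwise comparison (the Claim in the proof of Theorem \ref{thm.phifullgroup}): the $T_2$-cocycle of $T_1$ at $x$ is bounded by $n$ times the $T_1$-cocycle of a suitable element of $[T_1]_\varphi$ plus $n$. This explicit two-sided estimate is precisely the ingredient your approach is missing.
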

	Concrete examples of transformations to which this theorem applies are Bernoulli shifts, irrational rotations on the circle and the $m$-odometer for any integer $m$.
	One can show that the only ergodic measure-preserving transformations that are \emph{not} covered by this theorem are the ones that factor onto the universal odometer, that is, the transformation $t\mapsto t+1$ on the profinite completion $\widehat{\Z}$.
	
	Let us point out that the proof of Theorem \ref{thmIntro:belinskaya opti asym} uses Theorem \ref{thmIntro:belinskaya opti sym}, so the two results are not independent. As we will explain later, Theorem \ref{thmIntro:belinskaya opti asym} also depends on the Baire category theorem. 
	
	However, Theorem \ref{thmIntro:belinskaya opti sym} is somewhat more explicit. 
The starting point of the proof is the following simple construction, which was already used in \cite[Thm.~4.8]{lemaitremeasurableanaloguesmall2018}. 
Let us fix an ergodic transformation $T_1$ with $(T_1)^n$ ergodic. Suppose we have a periodic transformation $P$  all of whose orbits have cardinality $n$ and are contained in those of $T_1$.
Consider the transformation $T_2$ obtained by composing $P$ with the transformation induced by $T_1$ on a fundamental domain of $P$. Then $T_1$ and $T_2$ have the same orbits. However, $(T_2)^n$ is not ergodic and thus $T_1$ and $T_2$ are not flip-conjugate.  The heart of our proof is therefore to construct $P$ so that the orbit equivalence cocycles between $T_1$ and $T_2$ satisfy the required integrability conditions.

	For many concrete measure-preserving transformations, explicit examples of such periodic transformations $P$ with specific integrability conditions can be obtained. We will give details in the case of the Bernoulli shift, see Example \ref{ex: bernoulli involutions}.

	\paragraph*{Shannon orbit equivalence and dynamical entropy}  A remarkable consequence of Theorem \ref{thmIntro:belinskaya opti sym} can be stated in the context of \emph{Shannon orbit equivalence}, as defined by Kerr and Li \cite{kerrEntropyShannonOrbit2019}. Two measure-preserving transformations are Shannon orbit equivalent if there exists an orbit equivalence between them whose orbit equivalence cocycles $c_1$ and $c_2$ both have finite Shannon entropy. In \cite{austinBehaviourEntropyBounded2016}, it was implicitely shown that, among actions of finitely generated amenable groups which are not virtually cyclic, Shannon orbit equivalence preserves dynamical entropy. This was then generalized in \cite{kerrEntropyShannonOrbit2019} to countable amenable groups that are neither locally finite nor virtually cyclic, as well as some nonamenable groups. Kerr and Li implicitly asked whether dynamical entropy is an invariant of Shannon orbit equivalence for measure-preserving transformations and wondered whether Shannon orbit equivalence could actually directly imply flip-conjugacy. They answered positively the first part of this question in \cite{kerr2022entropy}. Here, we show that Shannon orbit equivalence doesn't boil down to flip-conjugacy.


	\begin{thm}[{see Theorem \ref{thm.reponsekerrli}}]\label{thmIntro.question kerr and li}
		Let $T_1\in\Aut(X,\mu)$ be an ergodic transformation and assume that $(T_1)^n$ is ergodic for some $n\geq 2$. Then there exists $T_2\in\Aut(X,\mu)$ such that $T_1$ and $T_2$ are Shannon orbit equivalent but not flip-conjugate. 
	\end{thm}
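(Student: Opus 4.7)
The plan is to deduce Theorem \ref{thmIntro.question kerr and li} directly from Theorem \ref{thmIntro:belinskaya opti sym} by choosing a sublinear integrability gauge $\varphi$ whose associated integrability condition already forces the orbit equivalence cocycles to have finite Shannon entropy.

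Concretely, I would set $\varphi(t) := \sqrt{t}$, which is clearly sublinear in the sense of the paper since $\varphi(t)/t \to 0$. Applying Theorem \ref{thmIntro:belinskaya opti sym} to this $\varphi$ and the given $T_1$ yields an ergodic measure-preserving transformation $T_2$ and an orbit equivalence $S$ between $T_1$ and $T_2$ whose cocycles $c_1$ and $c_2$ are both $\varphi$-integrable, while $T_1$ and $T_2$ are not flip-conjugate. It therefore remains only to check that, for each $i\in\{1,2\}$, $\sqrt{|c_i|}\in \mathrm{L}^1(X,\mu)$ implies $H(c_i)<+\infty$.

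For this, write $p_k := \mu(c_i\inv(\{k\}))$ for $k\in\Z$ and take as reference the probability distribution $r_k := C(1+|k|)^{-2}$, where $C$ is the appropriate normalising constant. The non-negativity of Kullback--Leibler divergence (Gibbs' inequality) yields
\[
H(c_i) \;=\; -\sum_{k\in\Z} p_k \log p_k \;\leq\; -\sum_{k\in\Z} p_k \log r_k \;=\; -\log C \;+\; 2\sum_{k\in\Z} p_k \log(1+|k|).
\]
Since $\log(1+t) = o(\sqrt{t})$ as $t\to+\infty$, there is a constant $M$ with $\log(1+t)\leq \sqrt{t}+M$ for all $t\geq 0$, so
\[
\sum_{k\in\Z} p_k\log(1+|k|) \;\leq\; M + \int_X \sqrt{|c_i(x)|}\, d\mu \;<\; +\infty.
\]
Hence $H(c_1)$ and $H(c_2)$ are both finite, meaning that $T_1$ and $T_2$ are Shannon orbit equivalent, as desired.

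The real work is already contained in Theorem \ref{thmIntro:belinskaya opti sym}, whose proof, as sketched in the introduction, rests on constructing a periodic $n$-to-$1$ transformation $P$ inside the orbits of $T_1$ with controlled tail decay. Once that theorem is established for any sublinear gauge dominating $\log(1+t)$ (in particular $\sqrt{t}$), the present reduction is a short bookkeeping argument via the entropy-moment inequality above. The main obstacle is therefore not in the present step but in matching the tail of $P$ to the gauge $\varphi$ in the proof of Theorem \ref{thmIntro:belinskaya opti sym}.
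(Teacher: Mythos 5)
Your proposal is correct and follows essentially the same route as the paper: the paper applies Corollary \ref{cor.belinskayaoptimal} with the sublinear gauge $\varphi(t)=\log(1+t)$ and then bounds the Shannon entropy of a $\varphi$-integrable cocycle via the inequality $-t\log t\leq st+e^{-s-1}$ with $s=2\log(1+n)$, which is exactly your Gibbs-inequality comparison against the reference distribution proportional to $(1+|k|)^{-2}$. Your choice of $\varphi(t)=\sqrt{t}$ instead of $\log(1+t)$ is an inessential variation, since both gauges are sublinear and dominate $\log(1+t)$.
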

	
	The above theorem is obtained by applying Theorem \ref{thmIntro:belinskaya opti sym} with any sublinear function $\ph$ such that $\ln(1+t)=O(\varphi(t))$. Indeed, for any such function, $\ph$-integrable orbit equivalence implies Shannon orbit equivalence, see Theorem \ref{thm.intOEimpliesShannonOE}.
	
	We also observe that Shannon orbit equivalence preserves finiteness of dynamical entropy, see Proposition \ref{prop.finiteness entropy preserved}. This is now subsumed by a recent preprint of Kerr and Li who proved that the dynamical entropy is preserved under Shannon orbit equivalence \cite{kerr2022entropy}.

	\begin{qu}\label{qu: phi oe and entropy}
		For which unbounded sublinear metric-compatible functions $\varphi$ is it true that dynamical entropy is an invariant of $\ph$-integrable orbit equivalence?
	\end{qu}

By the above discussion, we already know that this holds for all $\varphi$ such that \mbox{$\ln(1+t)=O(\varphi(t))$}. On the other hand, using Dye's theorem, it is not hard to see that any two ergodic measure-preserving transformations are $\varphi$-integrably orbit equivalent for \emph{some} sublinear unbounded function $\varphi$ (cf.~the proof of \cite[Prop.~4.24]{delabieQuantitativeMeasureEquivalence2020}). So not every sublinear unbounded function satisfies the condition of the question. 
	
	\paragraph*{$\ph$-integrable full groups}
	
	The proof of both our main results will make crucial use of the notion of $\ph$-\textit{integrable full group}. Whenever $T$ is an ergodic measure preserving transformation of the probability space $(X,\mu)$, Dye defined a Polish group $[T]$, called the full group of $T$. This group is by definition the set of all measure-preserving transformations $U$ of $(X,\mu)$ whose orbits are contained in $T$-orbits. More precisely, $U\in [T]$ if there is a function $c_U$, called the $T$-\defin{cocycle of }$U$, such that $U(x)=T^{c_U(x)}(x)$ for all $x\in X$. The above stated theorem of Dye, that all ergodic transformations are obit equivalent, was originally stated in terms of full groups: whenever $T_1$ and $T_2$ are ergodic transformations, the full groups $[T_1]$ and $[T_2]$ are conjugate. 
	
	In our context, once $\ph$ is fixed, the reasonable analogue of the full group associated to this integrability condition would be  the set of transformations $U\in [T]$ such that the cocycle $c_U$ is $\ph$-integrable. However, for this set to be a subgroup of $[T]$, we will have to impose a mild restriction on $\ph$. We say that $\varphi:\R_+\to\R_+$ is a  \defin{metric-compatible function} if 
	\begin{itemize}
		\item (subadditivity) for all $s,t\in\R_+$, $\varphi(s+t)\leq \varphi(s)+\varphi(t)$.
		\item (separation) $\varphi(0)=0$ and $\varphi(t)>0$ for all $t>0$.
		\item (monotonicity) $\varphi$ is a non-decreasing function.
	\end{itemize}
	
	The name metric-compatible comes from the observation that whenever $d$ is a metric and $\ph$ a metric-compatible function, then $\ph\circ d$ is also a metric. The following theorem is a combination of Lemma \ref{lem.distanceonphifullgroup} and Theorem \ref{thm.polishgrouptopology}.
	
	\begin{thm}
		Let $\ph$ be a metric-compatible function and let $T$ be a measure preserving transformation of the probability space $(X,\mu)$. Then the set \[[T]_\ph\coloneqq \left\lbrace U\in [T]\colon \int_X \ph(\lvert c_U(x)\rvert)d\mu<+\infty\right\rbrace\]
		is a group. Moreover the function \[\d_{\ph,T}(U,V)\coloneqq\int_X\ph(\lvert c_U(x)-c_V(x)\rvert)d\mu\]
		is a complete, right-invariant and separable metric on $[T]_\ph$ whose induced topology is a group-topology. In particular $([T]_\ph,\d_{\ph,T})$ is a Polish group. 
	\end{thm}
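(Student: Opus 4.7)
The plan is to deduce everything from the cocycle identities $c_{UV}(x) = c_U(V(x)) + c_V(x)$ and $c_{U^{-1}}(x) = -c_U(U^{-1}(x))$, together with the measure-preservation of elements of $[T]$ and the three axioms of metric-compatibility of $\ph$. Subadditivity and monotonicity combined with the change of variables $x\mapsto V(x)$ give $\int_X\ph(|c_{UV}|)d\mu \leq \int_X\ph(|c_U|)d\mu + \int_X\ph(|c_V|)d\mu$ and $\int_X\ph(|c_{U^{-1}}|)d\mu = \int_X\ph(|c_U|)d\mu$, showing that $[T]_\ph$ is closed under composition and inversion. Separation of $\ph$ makes $\d_{\ph,T}$ separate points, symmetry is trivial, and the triangle inequality follows from $|c_U - c_W|\leq |c_U - c_V|+|c_V - c_W|$ via subadditivity and monotonicity. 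Right-invariance is the identity $c_{UW} - c_{VW} = (c_U - c_V)\circ W$ combined with measure-preservation of $W$.

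For the group topology, the technical heart is joint continuity of composition. Given $U_n\to U$ and $V_n\to V$ in $\d_{\ph,T}$, I would use the decomposition
\[
c_{U_n V_n}(x) - c_{UV}(x) = \bigl[c_{U_n}(V_nx) - c_U(V_nx)\bigr] + \bigl[c_U(V_nx) - c_U(Vx)\bigr] + \bigl[c_{V_n}(x) - c_V(x)\bigr].
\]
After applying $\ph$ of the absolute value and integrating, subadditivity splits $\d_{\ph,T}(U_nV_n, UV)$ into three pieces; the first equals $\d_{\ph,T}(U_n,U)\to 0$ by measure-preservation of $V_n$, and the third is $\d_{\ph,T}(V_n,V)\to 0$. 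The cross-term vanishes off $A_n := \{V_nx\neq Vx\}$; since $\ph(1)\,\mu\{V_n\neq V\}\leq \d_{\ph,T}(V_n, V)$, we have $\mu(A_n)\to 0$, and on $A_n$ the integrand is bounded by $\ph(|c_U(V_nx)|) + \ph(|c_U(Vx)|)$, whose integrals equal $\int_{V_n(A_n)}\ph(|c_U|) + \int_{V(A_n)}\ph(|c_U|)$ and both vanish by absolute continuity of the finite measure $\ph(|c_U|)\,d\mu$. Continuity of inversion follows from a parallel argument: reducing via right-invariance to continuity at the identity, I decompose $c_{U_n^{-1}U}$ and control the contribution of the small set $\{U_n\neq U\}$ using the fact that the family $\ph(|c_{U_n}|)$ is uniformly integrable along a convergent sequence.

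For completeness, a $\d_{\ph,T}$-Cauchy sequence $(U_n)$ is also Cauchy in the uniform metric $d_u(U,V) = \mu\{U\neq V\}$ (by $\ph(1)\,d_u\leq \d_{\ph,T}$), under which $[T]$ is known to be a complete Polish group, so $(U_n)$ has a uniform limit $U\in[T]$; Fatou's lemma applied to $\ph(|c_{U_n}-c_{U_m}|)$ then upgrades this to $\d_{\ph,T}(U_n,U)\to 0$ and gives $\ph$-integrability of $c_U$. For separability, on each stratum $\{U\in[T]\colon |c_U|\leq N\}$ the metric $\d_{\ph,T}$ is sandwiched as $\ph(1)\,\mu\{c_U\neq c_V\}\leq \d_{\ph,T}(U,V)\leq \ph(2N)\,\mu\{c_U\neq c_V\}$, hence equivalent to $d_u$, so separability of $[T]$ in $d_u$ transfers to each stratum. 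Density of bounded-cocycle elements in $[T]_\ph$, obtained by truncation, is the main obstacle I anticipate: naively setting $c_U = 0$ on $\{|c_U|>N\}$ need not yield a bijection of $X$, so a Rokhlin-tower or marker-style adjustment on a small-measure set is needed to restore bijectivity while controlling the $\d_{\ph,T}$-error.
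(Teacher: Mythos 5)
Your treatment of the group structure, the metric, right-invariance, completeness, and the group topology is correct and close in spirit to the paper's. Two of your choices differ in detail and both work: you prove joint continuity of multiplication directly via the three-term decomposition of $c_{U_nV_n}-c_{UV}$ (handling the cross-term $\int_{A_n}\ph(\lvert c_U(V_nx)\rvert)+\ph(\lvert c_U(Vx)\rvert)\,d\mu$ by absolute continuity of the finite measure $\ph(\lvert c_U\rvert)\,d\mu$), whereas the paper first proves continuity of inversion by a similar estimate and then deduces continuity of multiplication from right-invariance; and for completeness you apply Fatou directly to $\ph(\lvert c_{U_n}-c_{U_m}\rvert)$ along a pointwise-convergent subsequence, whereas the paper passes through the observation that $(\lvert c_{U_n}-c_U\rvert_\ph)_n$ is $\LL^1$-Cauchy and converges in measure to $0$. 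One small caveat on inversion: right-invariance alone gives $\d_{\ph,T}(U_n^{-1},\id)=\d_{\ph,T}(\id,U_n)$, i.e.\ continuity of inversion \emph{at the identity}, but transporting this to an arbitrary base point requires the joint continuity of multiplication you establish (a right-invariant metric is not conjugation-invariant); your sketch glosses over this but the ingredients are all present.

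The genuine gap is in separability, and it is exactly the one you flag yourself. Your stratification argument reduces separability of $[T]_\ph$ to the density of the bounded-cocycle elements $\bigcup_N\{U\in[T]\colon \lvert c_U\rvert\leq N\}$ in $([T]_\ph,\d_{\ph,T})$, and this density is a nontrivial claim that you do not prove: truncating $c_U$ on $\{\lvert c_U\rvert>N\}$ leaves a partial injection from $B_N\coloneqq\{\lvert c_U\rvert>N\}$ onto $U(B_N)$ to be replaced by a bounded-cocycle bijection between these two sets inside the $T$-orbits, with $\int_{B_N}\ph(\lvert c_{V_N}\rvert)\,d\mu$ small --- and since $\ph$ here is only assumed metric-compatible (not sublinear), the cheap-cost constructions of Section 3.1 are not available, so the ``Rokhlin-tower adjustment'' you invoke is a real piece of work, not a routine fix. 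The paper avoids the issue entirely: the map $U\mapsto c_U$ is an isometry of $([T]_\ph,\d_{\ph,T})$ onto a subspace of $\LL^\ph(X,\Z)\coloneqq\{f\colon X\to\Z\colon \int_X\lvert f\rvert_\ph\,d\mu<+\infty\}$ with the metric $(f,g)\mapsto\int_X\lvert f-g\rvert_\ph\,d\mu$; the latter is separable because the subgroup generated by characteristic functions of rational intervals is dense, and subspaces of separable metric spaces are separable. Replacing your density argument by this embedding closes the gap; alternatively, you would need to supply a proof of the density of bounded-cocycle elements.
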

	
	It turns out that any sublinear function is dominated by a sublinear metric-compatible function, see Lemma \ref{lem.free metric-compatible}. This will allows us to reduce the proof of Theorems \ref{thmIntro:belinskaya opti asym} and \ref{thmIntro:belinskaya opti sym} to the case where $\ph$ is metric-compatible and thereby to exploit the group structure of $[T]_\ph$.
	
	\paragraph*{Genericity of weakly mixing} Let us come back to Theorem \ref{thmIntro:belinskaya opti asym}. As the conclusions of Theorem \ref{thmIntro:belinskaya opti sym} are stronger, we just need to show Theorem \ref{thmIntro:belinskaya opti asym} whenever $(T_1)^n$ is non-ergodic for all $n\geq 2$. Observe that this condition is incompatible with the notion of \textit{weakly mixing}, as all the powers of any weakly mixing transformation are ergodic. Therefore our strategy is to provide for every ergodic transformation $T_1$ a weakly mixing transformation $T_2$ which has the same orbits as $T_1$ and whose $T_1$-cocycle is $\ph$-integrable. 
    We do not have any constructive argument for this and we proceed through the Baire category theorem, 
    inspired by a similar result in the context of full groups of ergodic measure-preserving equivalence relations (see \cite[Thm.~3.6]{kechrisGlobalaspectsergodic2010}). 
	
	\begin{thm}[{see Theorem \ref{thm.gdelta dense weakly mixing}}]\label{thmIntro.gdelta dense weakly mixing}
		Let $\ph$ be a sublinear metric-compatible function and let $T\in\Aut(X,\mu)$ be an aperiodic element. Then the set of all measure-preserving transformations in $[T]_\ph$ which are weakly mixing and have the same orbits as $T$ is a dense $G_\delta$ set in the Polish space of aperiodic transformations of $[T]_\ph$.
	\end{thm}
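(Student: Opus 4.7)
The plan is to show separately that the target set
\[\mathcal{G} := \{U \in [T]_\ph : U \text{ is weakly mixing and } \RR_U = \RR_T\}\]
is $G_\delta$ in $\APER \cap [T]_\ph$, and that it is dense.

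First, for the $G_\delta$ property, I would show that the inclusion $([T]_\ph, \d_{\ph,T}) \hookrightarrow (\Aut(X,\mu), \text{weak topology})$ is continuous. This uses the bound $\d_{\ph,T}(U,V) \geq \ph(1)\,\mu(\{x : U(x) \neq V(x)\})$, valid because $|c_U - c_V|$ is an integer $\geq 1$ on $\{U \neq V\}$ and $\ph$ is monotone with $\ph(1) > 0$; hence $\d_{\ph,T}$ dominates a multiple of the uniform metric on $[T]$, which itself dominates the weak topology of $\Aut(X,\mu)$. Each condition cutting out $\mathcal{G}$ is then seen to be $G_\delta$: weak mixing via the classical characterization ``for every $i,j,m$ there exists $n$ with $|\mu(U^{-n}A_i \cap A_j) - \mu(A_i)\mu(A_j)| < 1/m$'' for $\{A_i\}$ a countable dense family in the measure algebra; aperiodicity via $\mu(\{x : U^N x = x\}) \to 0$; and $\RR_U = \RR_T$, which for $U \in [T]$ reduces to $T \in [U]$, i.e., $\mu(\{x : T(x) \in \{U^n x : |n| \leq N\}\}) \to 1$ as $N \to \infty$.

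For density, the plan is a Halmos--Rokhlin style argument carried out in the $\ph$-integrable setting. Given $U_0 \in \APER \cap [T]_\ph$ and $\eps > 0$, I would first prove that aperiodic elements generating $\RR_T$ are dense in $\APER \cap [T]_\ph$, by applying the Rokhlin lemma to $U_0$ and modifying it on the top level of a tall tower to close its orbits within $T$-orbits at small $\d_{\ph,T}$-cost. Within this dense $G_\delta$, I would then prove a conjugacy lemma for $[T]_\ph$: any two aperiodic $\RR_T$-generators are approximately $[T]_\ph$-conjugate, obtained by level-wise matching of deep Rokhlin towers of the two generators while carefully bounding the $T$-cocycle of the conjugating element. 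Since weak mixing is a conjugacy invariant and $[T]_\ph \subseteq [T]$ permutes $T$-orbits, conjugation by elements of $[T]_\ph$ preserves $\mathcal{G}$; density of conjugacy classes therefore yields density of $\mathcal{G}$. Non-emptiness of $\mathcal{G}$ need not be supplied separately, as it follows a posteriori from Baire, consistent with the authors' remark that a weakly mixing element is produced non-constructively.

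The hard part will be controlling $\ph$-integrability during these cocycle manipulations. A perturbation supported on the top level of a Rokhlin tower of $U_0$ of height $N$ introduces contributions from the $T$-cocycle of $U_0^{N-1}$, which can be large even when $c_{U_0}$ is moderate. Subadditivity of $\ph$ is essential here: it allows the $\ph$-integral of the cocycle change to split across tower levels, so that a careful balance between height $N$, base measure, and residual set keeps the total $\d_{\ph,T}$-cost below $\eps$. Sublinearity of $\ph$ plays no direct role in this density argument, which is consistent with its being invoked elsewhere in the paper to break Belinskaya's theorem rather than to establish genericity.
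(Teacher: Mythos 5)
Your $G_\delta$ argument is fine and matches the paper's. The density argument, however, has two genuine gaps.

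First, your closing claim that ``sublinearity of $\ph$ plays no direct role in this density argument'' is provably false, and it signals that the tower-closing step (a) cannot work as described. The function $\ph=\id_{\R_+}$ is subadditive and metric-compatible, yet the paper shows (remark after Proposition \ref{prop: ergo is dense in aper}) that $\ERG\cap[T]_1$ is \emph{not} dense in $\APER\cap[T]_1$: the integrated cocycle gives a continuous $\Z$-valued index map which is nonzero on ergodic elements but vanishes on aperiodic elements such as $T_AT_{X\setminus A}\inv$. Since (for ergodic $T$) any aperiodic $U$ with $\RR_U=\RR_T$ is ergodic, your step (a) fails outright in $[T]_1$, so subadditivity alone cannot carry the cost estimate; sublinearity must enter. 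In the paper it enters exactly here, twice: Corollary \ref{cor.distanceinduiteaid} ($\d_{\ph,T}(T_A,\id)\to 0$ as $\mu(A)\to 0$, false for $\ph=\id$ by Kac) and the sublinear ergodic theorem via Corollary \ref{cor.dphip tn}, which is what makes it possible (Proposition \ref{prop. cheap return}) to find cross-sections $A_k$ of $U$ with $\d_{\ph,T}(U_{A_k},\id)\to 0$; one then replaces $U_{A_k}$ by $T_{A_k}$ in the decomposition $U=U_{A_k}P_k$ at a cost controlled by these two quantities.

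Second, your route to density of $\mathcal G$ via approximate conjugacy is circular as stated: density of the conjugacy class of a weakly mixing $\RR_T$-generator only helps if such an element of $[T]_\ph$ \emph{exists}, and you cannot get existence ``a posteriori from Baire'' because Baire requires you to have already proved density of the relevant open sets $O_{\mathcal F,\eps}$ (which are moreover not conjugation-invariant for fixed $\mathcal F$, only permuted). You never produce a single weakly mixing element, which is precisely the hard point the authors flag as non-constructive. The paper's source of weak mixing is Conze's theorem: for ergodic $U$ the set of $A$ with $U_A$ weakly mixing on $A$ is a dense $G_\delta$ in $\MAlg(X,\mu)$; combined with continuity from below of $A\mapsto U_A$ (their Scheff\'e-type Proposition \ref{prop.continuitedroiteinduite}) this shows every ergodic $U\in[T]_\ph$ is a $\d_{\ph,T}$-limit of elements $U_{Y_k}T_{X\setminus Y_k}$ lying in $O_{\mathcal F,\eps}$. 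Separately, your ``conjugacy lemma'' (approximate $[T]_\ph$-conjugacy of aperiodic $\RR_T$-generators, with control of the conjugator's cocycle) is an unproven and substantial claim that the paper neither proves nor needs; without it and without a weak-mixing input, the density half of the proposal does not go through.
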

	
	Besides the Baire category theorem, there are two other main ingredients in the proof of Theorem \ref{thmIntro.gdelta dense weakly mixing}. One is a result of Conze \cite{conzeEquationsFonctionnellesSystemes1972} which claims that starting from any ergodic mesure-preserving transformation, the first return map to a \textit{generic} measurable subset gives rise to a weakly mixing transformation. The second is a sublinear ergodic theorem which may be of independent interest.   
	\begin{thm}[{see Theorem \ref{thm.subadditivephifullgroup}}]\label{thmIntro.subadditivephifullgroup}
		Let $\ph:\R_+\to\R_+$ be a sublinear metric-compatible function. Let $U\in\Aut(X,\mu)$ and $f\colon X\rightarrow \mathbb C$ measurable such that $\ph(\lvert f\rvert) \in \LL^1(X,\mu)$. Then for almost every $x\in X$
		\[\lim_n \frac 1 n \ph\left(\left|\sum_{k=0}^{n-1} f(U^k(x))\right|\right)=0.\]
		The convergence also holds in $\LL^1$, that is
		\[\lim_n\int_X \frac 1 n\ph\left(\left|\sum_{k=0}^{n-1} f(U^k(x))\right|\right) d\mu=0.\]
	\end{thm}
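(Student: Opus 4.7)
My plan is to prove the theorem by truncating $f$ into a bounded part plus a tail and applying Birkhoff's ergodic theorem to the integrable function $\varphi(|f|)$. For each $\ell \in \N$, write $f = f_\ell + g_\ell$ with $f_\ell = f\,\mathbf{1}_{|f|\leq \ell}$ and $g_\ell = f\,\mathbf{1}_{|f|>\ell}$. The bounded part is trivial: $|S_n f_\ell(x)| \leq n\ell$, so sublinearity gives $\varphi(|S_n f_\ell(x)|)/n \leq \varphi(n\ell)/n \to 0$ uniformly in $x$. For the tail, the function $\varphi(|g_\ell|) = \varphi(|f|)\mathbf{1}_{|f|>\ell}$ lies in $\LL^1$, with $\LL^1$-norm tending to $0$ as $\ell \to \infty$ by dominated convergence.

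Assuming $\varphi$ is subadditive (the metric-compatible case), combining subadditivity with monotonicity yields
\[
\varphi(|S_nf|) \leq \varphi(|S_nf_\ell|) + \varphi(|S_ng_\ell|) \leq \varphi(n\ell) + \sum_{k=0}^{n-1}\varphi(|g_\ell(U^k x)|).
\]
Applying the Birkhoff ergodic theorem to $\varphi(|g_\ell|) \in \LL^1$, the sum on the right divided by $n$ converges a.e.\ and in $\LL^1$ to $\E[\varphi(|g_\ell|)\mid\mathcal{I}](x)$, where $\mathcal{I}$ is the $\sigma$-algebra of $U$-invariant sets. Dividing the display by $n$, sending $n \to \infty$ and then $\ell \to \infty$ (using conditional dominated convergence to see that $\E[\varphi(|g_\ell|)\mid\mathcal{I}] \to 0$, since $\varphi(|g_\ell|) \to 0$ a.e.\ and is dominated by $\varphi(|f|) \in \LL^1$), gives the a.e.\ convergence. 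Integrating the same estimate in $x$ produces
\[
\int \frac{\varphi(|S_nf|)}{n}\, d\mu \leq \frac{\varphi(n\ell)}{n} + \int \varphi(|g_\ell|)\, d\mu,
\]
which tends to $0$ by the same double-limit argument, giving the $\LL^1$ convergence.

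The hard part is the reduction from an arbitrary sublinear non-decreasing $\varphi$ to the subadditive case, since the chain of inequalities above crucially uses $\varphi(a+b) \leq \varphi(a) + \varphi(b)$. One invokes Lemma~\ref{lem.free metric-compatible} (or equivalently passes to the concave majorant of $\varphi$, which is automatically subadditive, non-decreasing, and sublinear) to obtain a metric-compatible sublinear $\tilde\varphi \geq \varphi$, and then verifies that the construction preserves the integrability condition, i.e.\ $\tilde\varphi(|f|) \in \LL^1$ whenever $\varphi(|f|) \in \LL^1$. This comparability of $\tilde\varphi$ with $\varphi$ at infinity is the delicate technical point; once it is established, the conclusion for $\varphi$ follows at once from the conclusion for $\tilde\varphi$ via $\varphi \leq \tilde\varphi$.
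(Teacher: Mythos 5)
Your core argument is correct and, for the pointwise statement, genuinely different from the paper's. The paper uses the same truncation of $f$ into a bounded part and a $\varphi$-integrable tail, but only to establish the $\LL^1$ convergence; for almost-everywhere convergence it first applies Kingman's subadditive ergodic theorem to $g_n(x)=\varphi\bigl(\bigl|\sum_{k=0}^{n-1}f(U^k(x))\bigr|\bigr)$ to obtain a.e.\ convergence of $g_n/n$ to \emph{some} limit, and then identifies that limit as $0$ by extracting an a.e.\ convergent subsequence from the $\LL^1$ statement. Your route --- Birkhoff applied to the truncated tail $\varphi(|g_\ell|)$, followed by the double limit $n\to\infty$ then $\ell\to\infty$ with conditional dominated convergence --- proves the pointwise statement directly and dispenses with Kingman entirely, which is a real simplification. (Minor point: the subadditive majorant produced by the reduction need not vanish at $0$, in which case $\varphi(|g_\ell|)\not\to 0$ on $\{|f|\le\ell\}$; run the Birkhoff step with $\varphi(|f|)\mathds{1}_{|f|>\ell}$ instead, at the cost of an additive $\varphi(0)$ that is killed by the division by $n$.)

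The genuine gap is precisely the step you flag and postpone: producing a subadditive, sublinear, non-decreasing $\tilde\varphi\ge\varphi$ with $\tilde\varphi(|f|)\in\LL^1$. This cannot be done by a majorant chosen independently of $f$. Any subadditive $\tilde\varphi\ge\varphi$ satisfies $\tilde\varphi(t)\ge\tilde\varphi(2t)/2\ge\varphi(2t)/2$; so if $\varphi$ takes the value $c_k$ on $[A_k,A_{k+1})$ with $c_{k+1}/c_k\to\infty$ and $A_k$ growing fast enough to keep $\varphi$ sublinear, then $\tilde\varphi(A_{k+1}/2)\ge c_{k+1}/2$ while $\varphi(A_{k+1}/2)=c_k$, and an $f$ whose modulus charges the levels $A_{k+1}/2$ with probabilities $p_k$ satisfying $\sum_k p_kc_k<\infty=\sum_k p_kc_{k+1}$ has $\varphi(|f|)\in\LL^1$ but $\tilde\varphi(|f|)\notin\LL^1$. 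Both the least concave majorant and the function $\psi$ of Lemma \ref{lem.free metric-compatible} suffer from this (for $\varphi(t)=\log(1+t)$ the lemma's construction gives $\psi(t)\asymp(\log t)^2$). So ``once it is established'' cannot be established as stated: the majorant must be tailored to the distribution of $|f|$, or the argument restructured. You should know, however, that the paper's own proof performs the identical replacement of $\varphi$ by $\tilde\varphi$ without addressing $\tilde\varphi$-integrability of $f$, so your writeup is no less complete than the original on this point, and you correctly isolated it as the delicate step. The subadditive (metric-compatible) case --- the only one the paper actually uses downstream, e.g.\ in Corollary \ref{cor.dphip tn} --- is fully proved by your argument.
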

	
	\paragraph*{Outline of the paper} In Section \ref{sec.preliminaries}, after a few preliminaries, we present the framework and establish basic properties of $\varphi$-integrable full groups. In Section \ref{sec.belinskaya}, we explain our construction of periodic transformations in $\ph$-integrable full groups and use it to prove Theorem \ref{thmIntro:belinskaya opti asym}. In Section \ref{sec.weaklymixing}, we first prove that $\ph$-integrable full groups are Polish groups. We then use the Baire category theorem and prove that weakly mixing elements are generic in the set of aperiodic elements in $[T]_\ph$. Combining this with Theorem \ref{thmIntro:belinskaya opti asym}, we finally prove Theorem \ref{thmIntro:belinskaya opti sym}. In the appendix, we present a proof of Belinskaya's theorem which is due to Katznelson and is not publicly available to our knowledge. 

    \paragraph*{Acknowledgments.} We thank the referee for many remarks and corrections and for pointing out a mistake in an earlier version of Theorem \ref{thm.subadditivephifullgroup}. We also thank Fabien Hoareau for pointing out several inaccuracies in the appendix.

	\section{Quantitative orbit equivalence and full groups}\label{sec.preliminaries}
	
	\subsection{Preliminaries}
	
	\begin{sloppypar}
		Throughout the paper, $(X,\mu)$ will denote a standard probability space without atoms. Recall that such spaces are measure isomorphic to the interval $[0,1]$ equipped with the Lebesgue measure. A bimeasurable bijection $T \colon X\to X$ is a \defin{measure-preserving transformation} of $(X,\mu)$ if for all measurable sets $A\subseteq X$, one has $\mu(T^{-1}(A))=\mu(A)$. We denote by $\Aut(X,\mu)$ the group of all measure-preserving transformations of $(X,\mu)$, two such transformations being identified if they coincide on a conull set. The group $\Aut(X,\mu)$ will be equipped with the \defin{uniform metric} $d_u$ defined by 
		\[d_u(T_1,T_2)\coloneqq \mu(\lbrace x\in X\colon T_1(x)\neq T_2(x)\rbrace).\] 
		This metric is bi-invariant and complete \cite[p.~73]{halmosLecturesErgodicTheory2017}.
	\end{sloppypar}
	
	\begin{rmq}\label{remk.no null sets}
        
         We will often use implicitely the fact that two measure-preserving transformations which belongs to the same class in $\Aut(X,\mu)$ agree on a invariant conull set. 
	\end{rmq}

	The \defin{support} of a measure-preserving transformation $T\in\Aut(X,\mu)$ is the measurable set $\supp(T)\coloneqq\{x\in X\colon T(x)\neq x\}$.
	
	A measure-preserving transformation $T\in\Aut(X,\mu)$ is \defin{periodic} if the $T$-orbit of almost every $x\in X$ is finite.
	A \defin{fundamental domain} of a periodic transformation $T\in\Aut(X,\mu)$ is a measurable subset $D\subseteq X$ which intersects almost every $T$-orbit at exactly one point. Every periodic transformation admits such a fundamental domain, as can be seen by fixing a Borel linear order $<$ on $X$ and taking for $D$ the set of $<$-least points in each orbit of the transformation. A measure-preserving transformation $T\in\Aut(X,\mu)$ is \defin{aperiodic} if the $T$-orbit of almost every $x\in X$ is infinite. It is \defin{ergodic} if every $T$-invariant measurable set is either null or conull.
	
	The \defin{full group} of a measure-preserving transformation $T$ is the group 
    \[[T]\coloneqq\left\lbrace U\in\Aut(X,\mu)\colon \text{for a.e. } x\in X,\ \exists n\in\mathbb Z\text{ s.t. }U(x)= T^n(x)\right\rbrace.\]
	
	\begin{rmq}

        By Remark \ref{remk.no null sets}, $U\in [T]$ if and only if for a.e.~$x\in X$, the $U$-orbit of $x$ is contained in the $T$-orbit of $x$. 
	\end{rmq}
	
	Two measure-preserving transformations $T_1,T_2\in\Aut(X,\mu)$ \defin{have the same orbits} if for almost every $x\in X$, the $T_1$-orbit of $x$ coincides with the $T_2$-orbit of $x$. By the above remark, this is equivalent to following condition: $T_1\in[T_2]$ and $T_2\in[T_1]$. We say that two measure-preserving transformations $T_1,T_2\in\Aut(X,\mu)$ are \defin{orbit equivalent} if there exists $S\in\Aut(X,\mu)$ such that $ST_1S^{-1}$ and $T_2$ have the same orbits, that is $ST_1S^{-1}\in[T_2]$ and $T_2\in[ST_1S^{-1}]$.
	
	Fix an aperiodic transformation $T\in\Aut(X,\mu)$. Any $U\in [T]$ is completely determined by its \defin{$T$-cocycle}, defined as the unique function $c_U\colon X\to\Z$ satisfying the equation $U(x)=T^{c_U(x)}(x)$ for all $x\in X$. The $T$-cocycle satisfies the so-called cocycle identity: given $U,V\in [T]$, we have 
	\begin{equation}\label{eq:cocycle identity}c_{UV}(x)=c_U(V(x))+c_V(x)\text{ for all }x\in X.\end{equation}

	Let $T\in\Aut(X,\mu)$ and $A\subseteq X$ be a measurable subset. The \defin{first return time} of $T$ to $A$ is the map $n_{T,A}\colon A\rightarrow \mathbb{N}^*$ defined by \[n_{T,A}(x)\coloneqq \min \{n\in\N^*\colon T^n(x)\in A\}.\]
	This function is well-defined up to measure zero by Poincaré's recurrence theorem.
	For convenience, we extend $n_{T,A}$ to all $X$, setting it to be $0$ on $X\setminus A$.
	Kac's lemma \cite{kacnotionrecurrencediscrete1947} yields the following inequality
	\begin{equation}\label{equation.kac}
		\int_X n_{T,A}(x)d\mu\leq 1.
	\end{equation}
	The \defin{first return map} of $T$ with respect to $A$ is the transformation $T_A\in [T]\leq \Aut(X,\mu)$ defined by \[T_A(x)\coloneqq T^{n_{T,A}(x)}(x).\]
	By definition, we have $\supp(T_A)=A$ and $x,y\in A$ are in the same $T$-orbit if and only if they are in the same $T_A$-orbit. Whenever $T$ is aperiodic, the first return time $n_{T,A}$ coincides with the $T$-cocycle $c_{T_A}$ of $T_A$.
	\begin{lem}\label{lem.sameorbits}
		Let $T\in\Aut(X,\mu)$, let $P\in\Aut(X,\mu)$ be a periodic transformation and $D$ a fundamental domain of $P$. 
		Let $U\coloneqq T_DP$. Then the following are true.
		\begin{enumerate}[(i)]
			\item $U_D=T_D$.\label{item.samefirstreturnmap} 
			\item\label{item.returntime} If $x\in D$ and $n(x)$ is the cardinality of the $P$-orbit of $x$, then  $n_{U,D}(x)=n(x)$.
			\item If $P\in [T]$, then $T$ and $U$ have the same orbits.
			\label{item.sameorbit} 
		\end{enumerate}
	\end{lem}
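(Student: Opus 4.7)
The plan is to establish the three items in the order listed, with (ii) carrying most of the computational weight and the other two following fairly directly.

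For (ii), the key observation is that if $x\in D$ has $P$-period $n(x)\geq 2$, then the intermediate points $P(x), P^2(x),\ldots, P^{n(x)-1}(x)$ all lie outside $D$ by the fundamental domain property, while $P^{n(x)}(x)=x\in D$. Since $T_D$ fixes points outside $D$, a direct induction on $k$ shows $U^k(x)=P^k(x)$ for $1\leq k<n(x)$ (with each such iterate outside $D$) and $U^{n(x)}(x)=T_D(P^{n(x)}(x))=T_D(x)\in D$. This gives $n_{U,D}(x)=n(x)$. The degenerate case $n(x)=1$ (fixed point of $P$) is immediate: $U(x)=T_D(x)\in D$.

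For (i), the same computation yields $U_D(x)=U^{n(x)}(x)=T_D(x)$ for $x\in D$, and since both $U_D$ and $T_D$ are supported on $D$ this proves the identity on all of $X$.

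For (iii), the assumption $P\in[T]$ together with the trivial fact $T_D\in[T]$ gives $U=T_DP\in[T]$, so every $U$-orbit is contained in a $T$-orbit. The harder containment is the reverse. The reduction is that every $P$-orbit meets $D$ exactly once, so it suffices to show $\orb_T(x)\subseteq \orb_U(x)$ for $x\in D$. Applying (i)--(ii) to both $U$ and $U^{-1}$, the $U$-orbit of $x$ can be described as $\bigcup_{y\in\orb_{T_D}(x)}\orb_P(y)$. To see that this set contains $\orb_T(x)$, let $y\in\orb_T(x)$ and let $z\in D$ be the unique point of the $P$-orbit of $y$ lying in $D$. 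Since $P\in[T]$, the points $y$ and $z$ are in the same $T$-orbit as $x$, and because $z\in D$ we have $z\in\orb_T(x)\cap D=\orb_{T_D}(x)$, so $y\in\orb_P(z)$ lies in $\orb_U(x)$ as required.

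The only real subtlety is the last identity $\orb_T(x)\cap D=\orb_{T_D}(x)$, which is the standard characterization of the first return map. Once this is on hand, the rest of (iii) reduces to bookkeeping. The case split at $n(x)=1$ in (ii) and the reminder to trace $U^{-1}$ as well as $U$ in (iii) are the only other spots where care is needed.
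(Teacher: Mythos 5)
Your proof is correct and follows essentially the same route as the paper's: the induction showing $U^i(x)=P^i(x)\notin D$ for $1\leq i<n(x)$ and hence $U^{n(x)}(x)=T_D(x)$ gives (i) and (ii), and (iii) is obtained by reducing to points of $D$ via the $P$-orbits and invoking $U_D=T_D$. The extra details you supply (the $n(x)=1$ case, the explicit description of the $U$-orbit, and the identity $\mathrm{Orb}_T(x)\cap D=\mathrm{Orb}_{T_D}(x)$) are all correct and merely make explicit what the paper leaves terse.
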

	\begin{proof}
		We first prove \eqref{item.samefirstreturnmap} and \eqref{item.returntime}. Clearly $U_D(x)=T_D(x)=x$ for every $x\notin D$. Since $D$ is a fundamental domain for $P$, for all $x\in D$ and $i\in\{1,\dots,n(x)-1\}$, we have $P^i(x)\notin D$. Since $T_D(x)=x$ for all $x\notin D$, we deduce by induction that 
		\[U^i(x)=P^i(x)\notin D\text{ for all }x\in D\text{ and }i\in\{1,\dots,n(x)-1\}.\] 
		So for all $x\in D$, one has $U^{n(x)}(x)=UU^{n(x)-1}(x)=T_DP^{n(x)}(x)=T_D(x)$. This shows Items \eqref{item.samefirstreturnmap} and \eqref{item.returntime}.

		We now prove Item \eqref{item.sameorbit}. Clearly, $U\in [T]$. 
		We need to show that $T\in[U]$. Observe that for almost every $x$, the $U$-orbit of $x$ meets $D$: indeed, if $x\in P^i(D)$ for $i\in \{1,\dots,n(x)-1\}$, then $U^{-i}(x)=P^{-i}(x)\in D$. Since being in the same orbit is an equivalence relation, it is enough to show that any two points in $D$, which belong to the same $T$-orbit, are in the same $U$-orbit. This follows directly from \eqref{item.samefirstreturnmap}.
	\end{proof}
	
	We will also need the following lemma which can be proven with the same kind of arguments as above. 
	
	
	\begin{lem}\label{lem.P is Periodic}
		Let $U\in\Aut(X,\mu)$ and let $A$ be a measurable subset of $X$ which intersects every $U$-orbit. Then $(U_A)^{-1}U$ is periodic and $A$ is a fundamental domain for it.
	\end{lem}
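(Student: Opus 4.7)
The plan is to understand $P := (U_A)^{-1} U$ by decomposing every $U$-orbit into ``excursions'' starting from $A$. Conservativity of $U$ (Poincaré recurrence) together with the hypothesis that $A$ meets every $U$-orbit should guarantee that, up to a null set, every $x \in X$ is uniquely of the form $U^i(y)$ for some $y \in A$ and $0 \leq i < n_{U,A}(y)$. These excursions then partition $X$ into finite blocks, each meeting $A$ in exactly one point.

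The key computation is to check that $P$ cyclically permutes each excursion $\{y, U(y), \ldots, U^{n_{U,A}(y)-1}(y)\}$, sending $U^i(y)$ to $U^{i+1}(y)$ when $i < n_{U,A}(y)-1$, and the top point $U^{n_{U,A}(y)-1}(y)$ back to $y$. This splits into two very short cases depending on whether one step of $U$ stays inside the excursion, in which case $(U_A)^{-1}$ acts trivially because $U_A$ is the identity on $X \setminus A$; or it pushes us out of the excursion at the top point, in which case $(U_A)^{-1}$ returns us to the base point $y$ simply by the definition $U_A(y) = U^{n_{U,A}(y)}(y)$. Once this is verified, periodicity of $P$ and the fact that $A$ is a fundamental domain for it both follow immediately, with the degenerate case $n_{U,A}(y) = 1$ producing a fixed point and causing no trouble.

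The only real subtlety is the first step, namely justifying that the excursions cover a conull subset of $X$. This uses both halves of the hypothesis: Poincaré recurrence applied to $U$ and to $U^{-1}$ ensures that a.e.\ point of $A$ returns to $A$ in both forward and backward time, so first return times are defined a.e.; and the assumption that $A$ meets every $U$-orbit then guarantees that every $x \in X$ lies in the forward $U$-orbit of some point of $A$, hence in a unique excursion. Beyond this set-theoretic bookkeeping, the proof reduces to the direct two-case computation outlined above, and there is no substantial obstacle.
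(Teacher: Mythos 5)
Your proposal is correct and follows essentially the same route as the paper: both identify the $P$-orbits of $P=(U_A)^{-1}U$ with the first-return blocks $\{y,U(y),\dots,U^{n_{U,A}(y)-1}(y)\}$, $y\in A$, and verify that $P$ agrees with $U$ off $A$'s preimage and snaps the top point back to $y$ via $(U_A)^{-1}U^{n_{U,A}(y)}(y)=y$. The only cosmetic difference is that the paper establishes that $A$ meets every $P$-orbit by iterating $P^{-1}=U^{-1}U_A$ backwards from an arbitrary point, whereas you phrase the same fact as the excursions partitioning $X$ up to a null set; your explicit appeal to Poincaré recurrence in both time directions is a welcome clarification of a step the paper leaves implicit.
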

	\begin{proof}
		Since $A$ intersects every $U$-orbit, for almost every $x\in X\setminus A$ there exists a smallest $n\geq 1$ such that $U^{-n}(x)\in A$. Remark that $((U_A)^{-1}U)^{-n}(x)=U^{-n}(x)\in A$ and hence $A$ intersects every $(U_A)^{-1}U$-orbit. If $x\in A$, then for every $0\leq n<n_{U,A}(x)$ we have that \[((U_A)^{-1}U)^n(x)=U^n(x)\notin A\text{ and }
		((U_A)^{-1}U)^{n_{U,A}(x)}(x)=U_A^{-1}UU^{n_{U,A}(x)-1}(x)=x.\]
		Since $A$ intersects every $(U_A)^{-1}U$-orbit, we obtain both that every $(U_A)^{-1}U$-orbit is finite and that $A$ is a fundamental domain for $(U_A)^{-1}U$.
	\end{proof}
	
	\subsection{\texorpdfstring{$\varphi$}{phi}-integrable orbit equivalence and full groups}
	
	We first define the notion of $\ph$-\textit{integrable orbit equivalence}.
	
	\begin{df}
		Fix $\varphi\colon \R_+\to\R_+$.
		Two aperiodic transformations $T_1,T_2\in\Aut(X,\mu)$ are $\ph$-\defin{integrably orbit equivalent} if there exists $S\in\Aut(X,\mu)$ such that $ST_1S^{-1}$ and $T_2$ have the same orbits and their respective cocycles are $\varphi$-integrable. To be more precise, we ask that
		\[\int_X \ph(\lvert c_{ST_1S^{-1}}(x)\rvert) d\mu<+\infty\text{ and }\int_X \ph(\lvert c_{T_2}(x)\rvert) d\mu<+\infty,\]
		where $c_{ST_1S^{-1}}$ is the $T_2$-cocycle of $ST_1S^{-1}$ and $c_{T_2}$ is the $ST_1S^{-1}$-cocycle of $T_2$, defined for all $x\in X$ by the equations
		\[ST_1S\inv(x) = T_2^{c_{ST_1S\inv}(x)}(x)\text{ and } T_2(x)=(ST_1S\inv)^{c_{T_2}(x)}(x).\]
	\end{df}
	
	When $\varphi(t)=t^p$ for some $p\in (0,+\infty)$, we recover the notion of $\LL^p$ orbit equivalence. 
	
	\begin{rmq}\label{rmk.not eq rel}
		We warn the reader that even though the term $\LL^p$ orbit equivalence is often used in the literature, this terminology may sound a bit deceptive. Indeed, since the integrability condition has no reason to be preserved under composition of orbit equivalences, we do not expect $\ph$-integrable (even $\LL^p$) orbit equivalence to be an equivalence relation for every concave function $\ph$, although we don't have any counterexample. 
		The fact that it is the case for $p=1$ seems to be a rather artificial consequence of Belinskaya's theorem.
	\end{rmq}
	
	In our work, the function $\ph$ is at most linear and for our main theorems the function is assumed to be \defin{sublinear}, that is $\lim_{t\to +\infty}\ph(t)/t=0$. For example we are interested in the case of $\LL^p$ orbit equivalence for $p\leq 1$, or in the case of $\ph(t)=\log(1+t)$.
	
	In the context of $\varphi$-orbit equivalence, it is natural to consider the the set of measure-preserving transformations $U$ whose cocycle $c_U$ is $\ph$-integrable. In order for this set to be a group, the following conditions on $\ph$ are required. 
	
	\begin{df}
		A function $\varphi\colon \R_+\to \R_+$ is \defin{metric-compatible} if: 
		\begin{itemize}
			\item (subadditivity) for all $s,t\in\R_+$, $\varphi(s+t)\leq \varphi(s)+\varphi(t)$.
			\item (separation) $\varphi(0)=0$ and $\varphi(t)>0$ for all $t>0$.
			\item (monotonicity) $\varphi$ is a non-decreasing function.
		\end{itemize}
	\end{df}
	
	\begin{example} 
		Any concave function $\varphi \colon \R_+\to\R_+$ that satisfies $\varphi(0)=0$ and $\varphi(t)>0$ for all $t>0$ is metric-compatible. In particular for every $p\leq 1$, the function $\varphi(t)=t^p$ is metric-compatible. It is moreover sublinear whenever $p<1$. Other examples of sublinear metric-compatible functions are given by $\ph(t)=\log(1+t)$ or $\ph(t)=t/\log(2+t)$.
	\end{example}
	
	The term ``metric-compatible'' was coined because of the following property: whenever $d$ is a metric on a set $X$, then $\ph\circ d$ is also a metric on $X$. 
	
	\begin{convention}
	Let $\varphi\colon \R_+\to\R_+$ be a metric-compatible function. For all $t\in\R$, we use the notation 
		\[\lvert t\rvert_\varphi\coloneqq\varphi(\lvert t\rvert).\]
		The map $(s,t)\mapsto\lvert s-t\rvert_\varphi$ is a metric on $\R$. 
	\end{convention}
	
	\begin{df}
		Let $\varphi\colon \R_+\to\R_+$ be a metric-compatible function. The \defin{$\varphi$-integrable full group} of an aperiodic transformation $T\in\Aut(X,\mu)$ is
		\[[T]_\ph\coloneqq\left\lbrace U\in [T]\colon 
		\int_X \lvert c_U(x)\rvert_\varphi d\mu<+\infty\right\rbrace,
		\]
		where $c_U \colon  X\to\Z$ denotes the $T$-cocycle of $U$. 
	\end{df}
	
	Given a metric-compatible function $\ph$, the $\ph$-integrable full group $[T]_{\varphi}$ is indeed a group: given $U,V\in [T]_{\varphi}$, the cocycle identity implies that \[c_{UV^{-1}}(x)=c_U(V^{-1}(x))+c_{V^{-1}}(x)=c_U(V^{-1}(x))-c_V(V^{-1}(x)).\] 
	We then get that 
	\begin{align}\label{eq.triangleineq}
		\int_X \lvert c_{UV^{-1}}(x)\rvert_\varphi d\mu&\leq \int_X \lvert c_U(V^{-1}(x))\rvert_\varphi d\mu+\int_X \lvert c_V(V^{-1}(x))\rvert_\varphi d\mu\nonumber \\
		&=\int_X \lvert c_U(x)\rvert_\varphi d\mu+\int_X \lvert c_V(x)\rvert_\varphi d\mu<+\infty.
	\end{align}
	
	\begin{example} \label{exmp.basic examples}
		If $\varphi$ is any metric-compatible function which is bounded, then $[T]_\varphi=[T]$ and if $\varphi$ is the identity map, then we recover the $\LL^1$ full group $[T]_1$ defined by the third named author in \cite{lemaitremeasurableanaloguesmall2018}. 
		Any other such $\ph$ gives rise to new\footnote{We can actually characterize when $[T]_\varphi=[T]_\psi$ and more generally when $[T]_\varphi\leq [T]_\psi$, see Proposition~\ref{prop: chara equal phifull}.} examples of full groups, such as  $\LL^p$ full groups $[T]_p$ for $0<p<1$ obtained with the function $\ph(t)=t^p$, or else $[T]_{\log}$ obtained with the function $\ph(t)=\log(1+t)$. 
	\end{example}
	
	\begin{rmq}
		Given a metric-compatible function $\varphi$, it is now straightforward to check that two aperiodic transformations $T_1,T_2\in\Aut(X,\mu)$ are $\ph$-integrably orbit equivalent if and only if there is $S\in\Aut(X,\mu)$ such that $ST_1S^{-1}\in[T_2]_\ph$ and $T_2\in[ST_1S^{-1}]_\ph$. However, the notion of $\varphi$-orbit equivalence is a priori weaker than conjugacy of $\varphi$-integrable full groups. 
		Indeed conjugacy of $\varphi$-integrable full groups is an equivalence relation but $\ph$-integrable orbit equivalence may not be, see Remark \ref{rmk.not eq rel}.
		This is in contrast with the case of classical orbit equivalence, see \cite[Thm.~4.1]{kechrisGlobalaspectsergodic2010}.
	\end{rmq}


	In our two main results, namely Theorem \ref{thmIntro:belinskaya opti asym} and \ref{thmIntro:belinskaya opti sym}, the sublinear function $\ph$ is not assumed to be metric-compatible. The following lemma will allow us to reduce to the case where $\ph$ is in addition metric-compatible.


	\begin{lem}\label{lem.free metric-compatible}
		Let $\ph\colon\R_+\to\R_+$ be a sublinear function. Then there is a sublinear metric-compatible function $\psi\colon\R_+\to\R_+$ such that $\varphi(t)\leq\psi(t)$ for all $t$ large enough. 
	\end{lem}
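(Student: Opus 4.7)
The strategy is to build $\psi$ as the primitive of a suitably chosen non-increasing positive function $g\colon\R_+\to\R_+$ with $g(s)\to 0$ at infinity. Integrating a non-increasing function yields a concave non-decreasing function vanishing at $0$, and such a function automatically satisfies all three metric-compatibility conditions and is sublinear (by a Cesàro-type argument), so the only remaining issue is to arrange the domination $\ph\leq\psi$ for $t$ large enough.

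First, I would use sublinearity of $\ph$ to pick $T_0\geq 1$ with $\ph(t)\leq t$ for all $t\geq T_0$. I would then define $g\colon\R_+\to[0,1]$ by $g(t)=1$ on $[0,T_0]$ and $g(t)=\sup_{s\geq t}\ph(s)/s$ on $(T_0,+\infty)$. By construction $g$ is non-increasing, bounded by $1$, strictly positive on $[0,T_0]$, and satisfies $g(t)\to 0$ as $t\to+\infty$. I would finally set $\psi(t)\coloneqq\int_0^t g(s)\,ds$.

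Checking the required properties then reduces to routine verifications in the following order: $\psi(0)=0$, $\psi$ non-decreasing, and $\psi(t)>0$ for $t>0$ are immediate from $g\geq 0$ with $g\equiv 1$ on $[0,T_0]$; concavity of $\psi$ follows from the monotonicity of $g$, and combined with $\psi(0)=0$ this yields subadditivity via the standard convex-combination argument $s=\tfrac{s}{s+t}(s+t)+\tfrac{t}{s+t}\cdot 0$; sublinearity $\psi(t)/t\to 0$ follows by splitting $\int_0^t g$ at a large threshold beyond which $g$ is already arbitrarily small; and the domination $\psi(t)\geq\ph(t)$ for $t\geq T_0$ is just $\int_0^t g(s)\,ds\geq tg(t)\geq t\cdot \ph(t)/t=\ph(t)$, using again that $g$ is non-increasing.

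There is no real obstacle here. The only mildly subtle point worth flagging is the need to set $g\equiv 1$ on $[0,T_0]$ rather than extending $\sup_{s\geq t}\ph(s)/s$ down to $t=0$: without any control on $\ph$ near the origin, this supremum could be infinite for small $t$. Since the statement only requires $\ph\leq\psi$ eventually, this truncation costs nothing while ensuring that $\psi$ is well-defined, finite, and strictly positive on $(0,+\infty)$.
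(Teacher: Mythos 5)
Your proof is correct and follows essentially the same route as the paper's: both construct $\psi$ by integrating a non-increasing function that dominates $\ph(s)/s$ and tends to $0$ at infinity, the only difference being that the paper caps $\sup_{s\geq t}(\ph(s)+1)/s$ by taking a minimum with $1$ globally, whereas you truncate to the constant $1$ on $[0,T_0]$. Both devices serve the same purpose of keeping the integrand finite near the origin, and all of your verifications go through.
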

	\begin{proof}
		Set
		\begin{align*}
			\theta\colon\R_+^*\to\R_+,\quad & \theta(t)\coloneqq \min\left(1,\sup_{s\geq t}\frac{\ph(s)+1}{s}\right);\\
			\psi\colon\R_+\to\R_+,\quad & \psi(t)\coloneqq\int_0^t\theta(s)ds.
		\end{align*}
		Noting that $\theta$ is positive-valued and non-increasing, it is straightforward to check that $\psi$ is non-decreasing, subadditive and that $\psi(t)=0$ if and only if $t=0$. Moreover the fact that $\theta(t)$ tends to $0$ as $t$ approaches $+\infty$ implies that $\psi$ is sublinear.
		Now remark that for every $t\in\R_+^*$
		\[\psi(t)=\int_0^t \theta(s)ds\geq \int_0^t\theta(t)ds=t\theta(t).\]
		For $t\in \R_+^*$ large enough so that $\displaystyle\sup_{s\geq t}\frac{\ph(s)+1}s\leq 1$ we finally have \[t\theta(t)=t\sup_{s\geq  t}\frac{\ph(s)+1}{s}=t\sup_{s\geq  t}\frac{\ph(s)+1}{s}\geq t\frac{\ph(t)+1}{t}=\ph(t)+1\geq \varphi(t)\] so we are done.
	\end{proof}

	\begin{rmq}\label{rmk.restriction to metric-compatible}
		Given a sublinear function $\ph$, Lemma \ref{lem.free metric-compatible} grants us a sublinear metric-compatible function $\psi$ such that $\ph(t)\leq \psi(t)$ for all $t$ large enough. 
		Therefore, for any measurable function $f\colon X\to \Z$ we have 

        \[\int_X \psi(\lvert f(x)\rvert)d\mu<+\infty\text{ implies that  }\int_X\ph(\lvert f(x)\rvert)d\mu<+\infty.\]
		
		In particular, $\psi$-integrable orbit equivalence implies $\ph$-orbit equivalence and any element in a $\psi$-integrable full group will have $\ph$-integrable cocycle. 
	\end{rmq}
	
	We will state most of our results in the comfortable context of (sublinear) metric-compatible functions. However, many of our statements could be easily generalized to the general context of sublinear functions through Remark \ref{rmk.restriction to metric-compatible}. We will explicitly do so only in our main theorems, Theorem \ref{thmIntro:belinskaya opti asym} and \ref{thmIntro:belinskaya opti sym}.
	
	\subsection{Metric properties of \texorpdfstring{$\varphi$}{phi}-integrable full groups}
	
	We now introduce and study a natural extended pseudo-metric on full groups from which $\varphi$-integrable full groups naturally arise.
	
	\begin{lem} \label{lem.distanceonphifullgroup}
		Let $\varphi \colon  \R_+\to\R_+$ be a metric-compatible function and let $T\in\Aut(X,\mu)$ be an aperiodic transformation. Let $\d_{\varphi,T} \colon  [T]\times [T] \to \R_+\cup\{+\infty\}$ be the function defined by
		\[\d_{\varphi,T}(U,V)\coloneqq\int_X\lvert c_{U}(x)-c_V(x)\rvert_\varphi d\mu.\]
		Then the following are true.
		
		\begin{enumerate}[(i)]
			\item\label{item.fullgroupwithmetric} The group $[T]_\varphi$ is determined by $\d_{\ph,T}$: \[[T]_\ph=\{U\in [T]\colon \d_{\varphi,T}(U,\id)<+\infty\}.\]
			\item\label{item.truedistance} The restriction of $\d_{\varphi,T}$ to $[T]_\varphi\times[T]_\varphi$ is a metric on $[T]_\varphi$ which is right-invariant, that is, for all $U,V,W\in [T]_\varphi$, 
			\[\d_{\varphi,T}(UW,VW)=\d_{\varphi,T}(U,V).\]
		\end{enumerate}
	\end{lem}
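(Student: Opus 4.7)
\medskip

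The proof plan breaks into the two items. For (i), the key observation is simply that $c_{\id}\equiv 0$, so $\d_{\ph,T}(U,\id)=\int_X\lvert c_U(x)\rvert_\ph\,d\mu$, which is exactly the quantity defining membership in $[T]_\ph$. This gives the characterization directly.

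For (ii), I would first establish that $\d_{\ph,T}$ takes finite values on $[T]_\ph\times[T]_\ph$: given $U,V\in[T]_\ph$, pointwise we have $\lvert c_U(x)-c_V(x)\rvert\leq\lvert c_U(x)\rvert+\lvert c_V(x)\rvert$, and then by monotonicity and subadditivity of $\ph$ we get $\lvert c_U(x)-c_V(x)\rvert_\ph\leq\lvert c_U(x)\rvert_\ph+\lvert c_V(x)\rvert_\ph$, which integrates to a finite number. The triangle inequality is proved in exactly the same spirit: write $\lvert c_U(x)-c_W(x)\rvert\leq\lvert c_U(x)-c_V(x)\rvert+\lvert c_V(x)-c_W(x)\rvert$, apply $\ph$ using monotonicity and subadditivity, then integrate.

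Symmetry is obvious. For separation, I would note that $\d_{\ph,T}(U,V)=0$ forces $\lvert c_U(x)-c_V(x)\rvert_\ph=0$ almost everywhere, hence $c_U=c_V$ almost everywhere by the separation property of $\ph$. Since $T$ is aperiodic, $U(x)=T^{c_U(x)}(x)=T^{c_V(x)}(x)=V(x)$ almost everywhere, so $U=V$ in $\Aut(X,\mu)$.

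For right-invariance, the main tool is the cocycle identity \eqref{eq:cocycle identity}. Given $U,V,W\in[T]_\ph$, it yields $c_{UW}(x)-c_{VW}(x)=\bigl(c_U(W(x))+c_W(x)\bigr)-\bigl(c_V(W(x))+c_W(x)\bigr)=c_U(W(x))-c_V(W(x))$. Applying $\ph$ to the absolute value and integrating, the change-of-variables formula for the measure-preserving transformation $W$ gives
\[\d_{\ph,T}(UW,VW)=\int_X\lvert c_U(W(x))-c_V(W(x))\rvert_\ph\,d\mu=\int_X\lvert c_U(x)-c_V(x)\rvert_\ph\,d\mu=\d_{\ph,T}(U,V).\]
There is no real obstacle here; the only subtle point worth highlighting is that the triangle inequality and the finiteness bound both require monotonicity of $\ph$ to upgrade a pointwise integer inequality to an inequality after applying $\ph$, before subadditivity can be invoked. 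Aperiodicity of $T$ is essential, and only essential, for the separation axiom.
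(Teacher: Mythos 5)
Your proof is correct and follows essentially the same route as the paper: item (i) is immediate from $c_{\id}\equiv 0$, the metric axioms reduce to the fact that $(s,t)\mapsto\lvert s-t\rvert_\ph$ is a metric on $\R$ (which is where your monotonicity-then-subadditivity argument and the separation property of $\ph$ live), and right-invariance is the identical cocycle-identity plus change-of-variables computation. The paper merely states these steps more tersely; your observation that aperiodicity is needed precisely for the separation axiom (uniqueness of the cocycle) is a correct and worthwhile clarification.
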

	
	\begin{proof}
		Item \eqref{item.fullgroupwithmetric} is an immediate consequence of the definition of $[T]_\varphi$.
		
		Let us now prove Item \eqref{item.truedistance}. The fact that $\d_{\varphi,T}$ is a metric is a straightforward consequence of the fact that $(s,t)\mapsto \abs{s-t}_{\varphi}$ is a metric on $\R$. The right-invariance follows from the cocycle identity \eqref{eq:cocycle identity} and the fact that  the transformation $W$ is measure-preserving:
		\begin{align*}
			\d_{\varphi,T}(UW,VW)&=\int_X \abs{c_{UW}(x)-c_{VW}(x)}_\varphi d\mu\\
			&=\int_X \abs{c_U(W(x))-c_V(W(x))}_\varphi d\mu\\
			&=\int_X \abs{c_U(x)-c_V(x)}d\mu\\
			&=\d_{\varphi,T}(U,V).\qedhere
		\end{align*}
	\end{proof}
	
	\begin{example}\label{ex:basic examples d_phi} 
		Consider the metric-compatible function $\ph\coloneqq \min(\id_{\R_+},1)$. Then it is straightforward to check that $\d_{\ph,T}=d_u$ is the uniform metric on $[T]=[T]_\varphi$.
		
		Another example is obtained by taking $\varphi\coloneqq\id_{\R_+}$; we then recover the $\LL^1$ metric on the $\LL^1$ full group $[T]_1=[T]_{\varphi}$.
	\end{example}
	
	In order to compare $\ph$-integrable full groups, we are led to compare asymptotically metric-compatible functions. 
	We will use the following standard notation: given two real-valued functions $f$ and $g$, we write $f(t)=O(g(t))$ as $t\to +\infty$ if there exist $t_0>0$ and $C>0$ such for all $t>t_0$, we have  $\lvert f(t)\rvert\leq C \lvert g(t)\rvert$.
	Since the functions we consider are subadditive, it is enough to compare them on the integers.
	
	\begin{lem}\label{lem.lipschitzonN}
		Let $\ph$, $\psi$ be two metric compatible functions. Then the following are equivalent.
		\begin{enumerate}[(i)]
			\item\label{item.phi inferieur a psi} $\ph(t)=O(\psi(t))$ as $t\to +\infty$.
			\item\label{item.phi et psi geq 1} There exists $C>0$ such that $\ph(t)\leq C\psi(t)$ for all $t\geq 1$. 
			\item\label{item.phi et psi sur N} There exists $C>0$ such that $\ph(k)\leq C\psi(k)$ for all integers $k\in\N$.
		\end{enumerate}
	\end{lem}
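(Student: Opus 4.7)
The plan is to prove the cycle of implications $(ii) \Rightarrow (iii) \Rightarrow (i) \Rightarrow (ii)$, relying only on the three defining properties of metric-compatibility (monotonicity, subadditivity, separation).

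The implication $(ii) \Rightarrow (iii)$ is immediate by restricting the inequality to integers $k \geq 1$, and checking separately the case $k=0$ where $\varphi(0)=\psi(0)=0$ by separation.

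For $(iii) \Rightarrow (i)$, the idea is to reduce from real $t$ to integers by sandwiching $t$ between $\lfloor t \rfloor$ and $\lceil t \rceil = \lfloor t \rfloor + 1$. Using monotonicity of $\varphi$ and subadditivity of $\psi$, I would write
\[
\varphi(t) \leq \varphi(\lceil t \rceil) \leq C\psi(\lceil t \rceil) \leq C\bigl(\psi(\lfloor t \rfloor) + \psi(1)\bigr) \leq C\psi(t) + C\psi(1),
\]
so it remains to absorb the additive constant $C\psi(1)$ into the multiplicative constant for $t$ large. This is possible because $\psi$ is non-decreasing and separated: for any $t_0 \geq 1$, we have $\psi(t) \geq \psi(t_0) > 0$ for all $t \geq t_0$, hence $\varphi(t) \leq \bigl(C + C\psi(1)/\psi(t_0)\bigr)\psi(t)$ for $t \geq t_0$, which gives $\varphi(t) = O(\psi(t))$.

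Finally, for $(i) \Rightarrow (ii)$, I would split the interval $[1,+\infty)$ at the threshold $t_0$ provided by the big-$O$ hypothesis. On $[t_0, +\infty)$, the inequality $\varphi(t) \leq C_0 \psi(t)$ holds by assumption. On the compact piece $[1, t_0]$, monotonicity gives $\varphi(t) \leq \varphi(t_0)$ while separation and monotonicity give $\psi(t) \geq \psi(1) > 0$, so the ratio $\varphi(t)/\psi(t)$ is uniformly bounded by $\varphi(t_0)/\psi(1)$ on that interval. Taking $C := \max\bigl(C_0,\varphi(t_0)/\psi(1)\bigr)$ yields the required constant.

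The only genuinely nontrivial step is $(iii) \Rightarrow (i)$, where one must simultaneously exploit monotonicity (to pass from $t$ to the nearest integer), subadditivity of $\psi$ (to split $\psi(\lceil t \rceil)$), and separation (to absorb the constant $C\psi(1)$ into the multiplicative factor); the other two implications are essentially bookkeeping on a bounded interval.
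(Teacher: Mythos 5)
Your proposal is correct and follows essentially the same route as the paper: the same cycle of implications, the same sandwiching of a real $t$ between consecutive integers combined with subadditivity of $\psi$ and the lower bound $\psi(t)\geq\psi(1)>0$ to absorb the additive constant, and the same choice $C=\max(C_0,\varphi(t_0)/\psi(1))$ using monotonicity on the bounded piece. (The only micro-quibble is that $\lceil t\rceil=\lfloor t\rfloor+1$ fails for integer $t$, but that case is immediate from (iii) anyway.)
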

	
	\begin{proof}
		We first prove that \eqref{item.phi inferieur a psi} implies \eqref{item.phi et psi geq 1}. 
		Let $t_0\geq 1$ and $D>0$ such that for all  $t>t_0$, we have $\ph(t)\leq D\psi(t)$. Set $C\coloneqq \max(D,\ph(t_0)/\psi(1))$ and observe that since $\varphi$ and $\psi$ are non-decreasing, $\ph(t)\leq C\psi(t)$ for all $t\geq 1$.
		
		The implication \eqref{item.phi et psi geq 1}$\implies$\eqref{item.phi et psi sur N} is straightforward, so we are left with proving
		\eqref{item.phi et psi sur N}$\implies$\eqref{item.phi inferieur a psi}. Let $C>0$ be such that $\ph(k)\leq C\psi(k)$ for all integers $k\in\N$. Fix a real number $t\geq 2$ and let $n\in\N^*$ be such that $n\leq t<n+1$. Then we have
		\[\ph(t)\leq \ph(n+1)\leq C\psi(n+1)\leq C(\psi(t)+\psi(1))\leq C\left(1+\frac{\psi(1)}{\psi(t)}\right)\psi(t),\]
		and since $\psi(t)\geq \psi(1)$ for every $t\geq 1$, the proof is complete.
	\end{proof}
	
	We now compare $\varphi$-integrable full groups for different metric-compatible functions.
	
	\begin{lem}\label{lemma.comparisontopo}
		Let $\ph$ and $\psi$ be two metric-compatible functions and fix an aperiodic transformation $T\in\Aut(X,\mu).$
		If $\ph(t)=O(\psi(t))$ as $t\to+\infty$, then $[T]_\psi\leq [T]_\ph$. Moreover, the inclusion map is Lipschitz.
	\end{lem}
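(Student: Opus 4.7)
The plan is to use Lemma \ref{lem.lipschitzonN} to upgrade the asymptotic comparison $\varphi(t)=O(\psi(t))$ to a pointwise inequality on integers, and then apply it to the cocycle, which is $\Z$-valued.

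More precisely, by Lemma \ref{lem.lipschitzonN}, the hypothesis $\varphi(t)=O(\psi(t))$ yields a constant $C>0$ such that $\varphi(k)\leq C\psi(k)$ for every $k\in\N$. Given $U\in [T]_\psi$, the cocycle $c_U\colon X\to\Z$ is integer-valued, so $\lvert c_U(x)\rvert \in \N$ for every $x\in X$, and hence
\[\int_X \lvert c_U(x)\rvert_\varphi\, d\mu = \int_X \varphi(\lvert c_U(x)\rvert)\, d\mu \leq C\int_X \psi(\lvert c_U(x)\rvert)\, d\mu<+\infty,\]
which shows that $U\in [T]_\varphi$ and establishes the inclusion $[T]_\psi\leq [T]_\varphi$.

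The same argument gives the Lipschitz estimate: for any $U,V\in [T]_\psi$, the difference $c_U-c_V$ is also integer-valued, and therefore
\[\d_{\varphi,T}(U,V) = \int_X \varphi(\lvert c_U(x)-c_V(x)\rvert)\, d\mu \leq C\int_X \psi(\lvert c_U(x)-c_V(x)\rvert)\, d\mu = C\,\d_{\psi,T}(U,V).\]
Thus the inclusion $([T]_\psi,\d_{\psi,T})\hookrightarrow([T]_\varphi,\d_{\varphi,T})$ is $C$-Lipschitz.

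There is no real obstacle here: the only subtle point is that the pointwise comparison $\varphi\leq C\psi$ is only available on $[1,+\infty)$ (or on $\N$), and one exploits the fact that cocycles take values in $\Z$ so that $\varphi(0)=\psi(0)=0$ handles the point $c_U(x)=0$ trivially. Everything else is a one-line integration.
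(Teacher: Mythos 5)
Your proof is correct and follows essentially the same route as the paper: invoke Lemma \ref{lem.lipschitzonN} to get $\varphi(k)\leq C\psi(k)$ on $\N$, apply it pointwise to the ($\Z$-valued) cocycles, and integrate to obtain $\d_{\varphi,T}(U,V)\leq C\,\d_{\psi,T}(U,V)$, from which both the inclusion and the Lipschitz property follow. The paper states only the metric inequality and derives the rest "immediately"; your slightly more explicit two-step presentation is the same argument.
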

	
	\begin{proof}
		By Lemma \ref{lem.lipschitzonN}, there is $C>0$ such that $\ph(k)\leq C\psi(k)$ for all integers $k\in\N$. Let $U,V\in [T]$. Then for almost every $x\in X$, 
		\[\lvert c_U(x)-c_V(x)\rvert_\ph\leq C \lvert c_U(x)-c_V(x)\rvert_\psi.\]
		Integrating over $X$, we get that $\d_{\ph,T}(U,V)\leq C \d_{\psi,T}(U,V)$. The lemma now follows immediately.
	\end{proof}
	
	\begin{cor}\label{cor: inclusions}
		Whenever $T$ is an aperiodic measure-preserving transformation, we have \[
		[T]_1\leq [T]_\varphi\leq [T],\]
		and the inclusion maps are Lipschitz,
	\end{cor}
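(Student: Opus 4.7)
The plan is to derive this as a direct consequence of Lemma \ref{lemma.comparisontopo} applied twice, once for each inclusion, together with the identifications recorded in Example \ref{ex:basic examples d_phi}: namely $[T]_1 = [T]_{\mathrm{id}_{\R_+}}$ and $[T] = [T]_{\min(\mathrm{id}_{\R_+},1)}$, both endpoints corresponding to metric-compatible functions, so that Lemma \ref{lemma.comparisontopo} is applicable in both directions.

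For the first inclusion $[T]_1 \leq [T]_\varphi$, by Lemma \ref{lemma.comparisontopo} it suffices to show $\varphi(t) = O(t)$ as $t \to +\infty$. This follows from subadditivity and monotonicity: for any integer $n \geq 1$ we have $\varphi(n) \leq n\varphi(1)$, and then for $t \geq 1$ the monotonicity gives $\varphi(t) \leq \varphi(\lceil t\rceil) \leq \lceil t\rceil\,\varphi(1) \leq 2\varphi(1)\,t$. For the second inclusion $[T]_\varphi \leq [T]$, again by Lemma \ref{lemma.comparisontopo}, it suffices to show $\min(t,1) = O(\varphi(t))$ as $t \to +\infty$. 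This is immediate: for $t \geq 1$, $\min(t,1) = 1$ and $\varphi(t) \geq \varphi(1) > 0$ by monotonicity and separation, so $\min(t,1) \leq \varphi(t)/\varphi(1)$.

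Lipschitzness of the two inclusions is then part of the conclusion of Lemma \ref{lemma.comparisontopo}, so nothing further is needed. There is essentially no obstacle here; the only mild point to be careful about is invoking the correct metric-compatible representatives of the two endpoint full groups, which is handled by Example \ref{ex:basic examples d_phi}.
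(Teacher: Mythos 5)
Your proposal is correct and follows the same route as the paper: both inclusions are obtained from Lemma \ref{lemma.comparisontopo} via the comparisons $\varphi(t)=O(t)$ (from subadditivity) and $\min(1,t)=O(\varphi(t))$, with the endpoint full groups identified as in Example \ref{ex:basic examples d_phi}. You simply spell out the elementary estimates that the paper leaves implicit.
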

	\begin{proof}
		Since $\ph$ is subadditive, we have $\ph(t)=O(t)$ as $t\to +\infty$. Moreover $\min(1,t)=O(\ph(t))$ as $t\to +\infty$. The conclusion now follows from Lemma \ref{lemma.comparisontopo}.
	\end{proof}
	
	We will show in Proposition \ref{prop: chara equal phifull} that the implication in Lemma \ref{lemma.comparisontopo} is an equivalence. For this, we will make a crucial use of the fact that the topologies induced by these metrics are Polish group topologies, see Theorem \ref{thm.polishgrouptopology}.

	\begin{rmq}
		Let $d_T \colon  X\times X\to \R_+\cup\{+\infty\}$ be the extended metric on $X$ defined by
		\[d_T(x,y)\coloneqq \inf\{n\in\N\colon T^n(x)=y\text{ or }T^n(y)=x\}.\]
		Then by definition of the $T$-cocycle of any $U\in [T]$, we have that for all $x\in X$, $d_T(U(x),x)=\lvert c_U(x)\rvert$. For all $U,V\in [T]_\varphi$, the cocycle identity implies that $c_{UV^{-1}}(x)=c_U(V^{-1}(x))-c_V(V^{-1}(x))$. Since $V$ preserves the measure, we obtain
		\begin{align*}
			\d_{\varphi,T}(U,V)&=\int_X\lvert c_U(V^{-1}(x))-c_V(V^{-1}(x))\rvert_\ph d\mu \\
			&=\int_X \lvert c_{UV^{-1}}(x)\rvert_\ph d\mu \\
			&=\int_X \ph(d_T(UV^{-1}(x),x))d\mu\\
			&=\int_X\ph(d_T(U(x),V(x)))d\mu.
        \end{align*}
		We won't use this formula hereafter. However, this point of view allows one to define $\varphi$-integrable full groups of non-necessarily free actions of finitely generated groups.
		Some of the arguments given in this paper work in this wider context; this will be examined in an upcoming work.
	\end{rmq}

	\section{Flexibility of \texorpdfstring{$\varphi$}{phi}-integrable orbit equivalence}\label{sec.belinskaya}

	\subsection{Construction of cycles in \texorpdfstring{$\varphi$}{phi}-integrable full groups}\label{sub.cycle}
	
	An \defin{$n$-cycle}, $n\geq 2$, is a periodic transformation $P\in\Aut(X,\mu)$ whose orbits have cardinality either $1$ or $n$. The aim of this section is to prove the following result. 
	
	\begin{thm}\label{thm.existencecycle}
		Let $\varphi\colon \R_+\to\R_+$ be a sublinear metric-compatible function. Let $T\in\Aut(X,\mu)$ be aperiodic. Then for all measurable $A\subseteq X$ and all integers $n\geq 2$, there exists an $n$-cycle $P\in [T]_\varphi$ whose support is equal to $A$. 
	\end{thm}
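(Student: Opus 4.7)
The plan is to construct $P$ iteratively, as a countable disjoint union of cyclic permutations, each supported on a small Rokhlin tower of height $n$ for a suitable first-return map. The key ingredient will be sublinearity of $\varphi$, which allows Kac's lemma to control the $\varphi$-integral of the cocycle as the tower bases become increasingly sparse.

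After reducing to the case $\mu(A)>0$ (otherwise $P=\id$ works), I would set $R_0\coloneqq A$ and proceed inductively. At step $k\geq 1$, the first-return transformation $T_{R_{k-1}}$ of $T$ to $R_{k-1}$ is aperiodic, so by Rokhlin's lemma I can choose $B_k\subseteq R_{k-1}$ such that $B_k,\,T_{R_{k-1}}(B_k),\,\ldots,\,T_{R_{k-1}}^{n-1}(B_k)$ are pairwise disjoint, with the remainder $R_k\coloneqq R_{k-1}\setminus\bigsqcup_{j=0}^{n-1}T_{R_{k-1}}^j(B_k)$ of arbitrarily small prescribed measure. I then define $P$ on $C_k\coloneqq R_{k-1}\setminus R_k$ as the cyclic permutation of levels, $P(T_{R_{k-1}}^j(B_k))=T_{R_{k-1}}^{j+1\bmod n}(B_k)$, and set $P=\id$ outside $\bigsqcup_k C_k$. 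Provided $\mu(R_k)\to 0$, the transformation $P$ is a genuine $n$-cycle with support $A$, and $P\in[T]$ because $T_{R_{k-1}}\in[T]$ at every step.

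A direct computation of the cocycle on $C_k$ shows that $|c_P(x)|$ equals a single first-return time $n_{T,R_{k-1}}(x)$ on the lower $n-1$ levels, and a sum of $n-1$ consecutive such return times on the top level. Combining subadditivity of $\varphi$ with the $T_{R_{k-1}}$-invariance of $\mu$ restricted to $R_{k-1}$ telescopes this to
\[
\int_{C_k}\varphi(|c_P|)\,d\mu \;\leq\; 2\int_{R_{k-1}}\varphi\bigl(n_{T,R_{k-1}}\bigr)\,d\mu.
\]

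The main obstacle is the summability of the right-hand side as $k$ varies. I would overcome it by exploiting sublinearity of $\varphi$. Let $g(M)\coloneqq\sup_{t\geq M}\varphi(t)/t$, which tends to $0$ as $M\to+\infty$. Splitting the integral over $R_{k-1}$ at a threshold $M_k$, using monotonicity of $\varphi$ on the set where the return time is at most $M_k$, and the defining bound $\varphi(t)\leq g(M_k)\,t$ together with Kac's inequality $\int n_{T,R_{k-1}}\,d\mu\leq 1$ on its complement, yields
\[
\int_{R_{k-1}}\varphi\bigl(n_{T,R_{k-1}}\bigr)\,d\mu \;\leq\; \varphi(M_k)\,\mu(R_{k-1}) \;+\; g(M_k).
\]
I would then first select $M_k\uparrow+\infty$ so that $g(M_k)\leq 2^{-k}$, and subsequently choose the Rokhlin leftovers so that $\mu(R_{k-1})\leq 2^{-k}/\varphi(M_k)$ at each step. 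Both prescriptions can be met simultaneously, and they force $\sum_k\int_{C_k}\varphi(|c_P|)\,d\mu<+\infty$, giving $P\in[T]_\varphi$ as required.
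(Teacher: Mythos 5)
Your proposal is correct and follows essentially the same route as the paper: successive Rokhlin towers of height $n$ for the first-return maps to the unexhausted part of $A$, the cocycle estimate $\int_{C_k}\varphi(|c_P|)\,d\mu\leq 2\int\varphi(n_{T,\cdot})\,d\mu$ via subadditivity (the paper's Lemma on closing cycles of induced pre-cycles), and the threshold splitting $\varphi(M_k)\mu(\cdot)+\sup_{t\geq M_k}\varphi(t)/t$ combined with Kac's inequality and a choice of leftover measures decaying fast enough relative to the thresholds. The only cosmetic caveat is that your prescription $\mu(R_{k-1})\leq 2^{-k}/\varphi(M_k)$ cannot be imposed at $k=1$ since $R_0=A$ is given, but that term is finite anyway (by Kac and $\varphi(t)\leq\varphi(1)t$), exactly as the paper isolates its $k=0$ term.
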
	
	
	\begin{rmq}
		The hypothesis that $\ph$ is sublinear is necessary, as the result is false  for $\mathrm{L}^1$ full groups of certain aperiodic transformations. Indeed if $T\in\Aut(X,\mu)$ is ergodic, then there exists an $n$-cycle in $[T]_1$ whose support is $A\subseteq X$ if and only if $\exp(2i\pi/n)$ is in the spectrum of the restriction of $T_A$ to $A$ \cite[Thm.~4.8]{lemaitremeasurableanaloguesmall2018}. In particular the $\mathrm{L}^1$ full group of the Bernoulli shift contains no $n$-cycle with full support for any $n\geq 2$. By contrast, Theorem \ref{thm.existencecycle} says that as soon as $p<1$, its $\mathrm{L}^p$ full group contains an $n$-cycle of full support for every $n\geq 2$.
	\end{rmq}
	
	\begin{example}\label{ex: bernoulli involutions}
		In certain concrete situations, we can exhibit explicit involutions. Let $T$ be the Bernoulli shift on $(\{0,1\},\kappa)^{\otimes\Z}$, where $\kappa$ is the uniform measure on $\{0,1\}$. Then for every $0<p<1/2$, there exists an involution in $[T]_p$ with full support and fundamental domain $X_0\coloneqq \{(x_n)_{n\in\Z}\in \{0,1\}^\Z\colon x_0=0\}$. 
		
		Indeed, for all $x\in X_0$, let $N(x)$ be the infimum of $n\geq 1$ such that $1$ appears strictly more often than $0$ in $\{x_1,\dots,x_n\}$. 
		Then the map $\pi\colon x\in X_0\mapsto T^{N(x)}(x)\in \{0,1\}^\mathbb Z\setminus X_0$ is almost everywhere well-defined and injective. 
		Thus it can be extended to an involution $P\in [T]$ with full support and fundamental domain $X_0$. 
		Standard estimates on the first return time to $0$ of the simple random walk on $\Z$ \cite[Chap.~III.2]{Feller} imply that $P$ belongs to $[T]_p$ for all $0<p<1/2$. 
	\end{example}
	
	\begin{rmq}Theorem \ref{thm.existencecycle} tells us that any measurable subset $A\subseteq X$ is the support of an involution. The situation is less flexible regarding fundamental domains. For example, the subset $X_0$ introduced in the previous example cannot be the fundamental domain of any involution in the $\LL^p$ full group of the Bernoulli shift for $1/2\leq p\leq 1$, as a consequence of a result of Liggett
		\cite{liggettTaggedParticleDistributions2002}.
		Note that his result is more general and stated in probabilistic terms; the connection to our context and a purely ergodic-theoretic version of his proof is presented
		in the second named author's PhD thesis \cite[Chap.~3]{josephTopologicalMeasurableDynamics2022}.
	\end{rmq}
	
	A \defin{partial measure-preserving transformation} of $(X,\mu)$ is a bimeasurable measure-preserving bijection $\pi$ between two measurable subsets $\dom(\pi)$ and $\rng(\pi)$ of $X$, called respectively the \textit{domain} and the \textit{range} of $\pi$. The \defin{support} of $\pi$ is the set 
	\[\supp(\pi)\coloneqq\{x\in\dom(\pi)\colon \pi(x)\neq x\}\cup\{x\in\rng(\pi)\colon \pi^{-1}(x)\neq x\}.\]
	A \defin{pre-cycle} of length $n\geq 2$ is a partial measure-preserving transformation $\pi\colon  \dom(\pi)\to\rng(\pi)$ of $(X,\mu)$ such that if we set $B\coloneqq \dom(\pi)\setminus\rng(\pi)$, then 
	\begin{itemize}
		\item $\{\pi^0(B),\ldots,\pi^{n-2}(B)\}$ is a partition of $\dom(\pi)$,
		\item $\{\pi^1(B),\ldots,\pi^{n-1}(B)\}$ is a partition of $\rng(\pi)$.
	\end{itemize}
	The set \(B=\dom(\pi)\setminus\rng(\pi)\) is called the \defin{basis} of the pre-cycle $\pi$.

	A pre-cycle $\pi$ of length $n$ can be extended to an $n$-cycle $P$ and called the \defin{closing cycle} of $\pi$, as follows: 
	\[P(x)\coloneqq\left\{\begin{array}{ll}\pi(x) & \text{ if }x\in\dom(\pi),\\ \pi^{-(n-1)}(x)& \text{ if }x\in \rng(\pi)\setminus\dom(\pi), \\ x & \text{ else.}\end{array}\right.\]
	Observe that the support of $P$ coincides with the support of the pre-cycle $\pi$ and that the basis $B$ is a fundamental domain for the restriction of $P$ to its support.  A pre-cycle $\pi$ is \defin{induced} by $T\in\Aut(X,\mu)$ if for all $x\in\dom(\pi)$, we have $\pi(x)=T_{\supp(\pi)}(x)$.
	
	\begin{lem}\label{lem.cycleinduced} 
		Let $\varphi \colon  \R_+\to\R_+$ be a metric-compatible function. Let $T\in\Aut(X,\mu)$ be an aperiodic transformation, let $\pi$ be a pre-cycle induced by $T$ and let $P$ be its closing cycle. Then 
		\[\d_{\varphi,T}(P,\id)\leq 2 \d_{\varphi,T}(T_{\supp(\pi)},\id).\]
		In particular $P$ belongs to $[T]_\varphi$.
	\end{lem}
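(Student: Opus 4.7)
Let $S \coloneqq T_{\supp(\pi)}$ and write $B$ for the basis of the pre-cycle, so that $\supp(\pi) = B\sqcup\pi(B)\sqcup\cdots\sqcup\pi^{n-1}(B)$ and $\pi^{n-1}(B) = \rng(\pi)\setminus\dom(\pi)$. The plan is to compute $c_P$ explicitly by splitting $\supp(P)=\supp(\pi)$ into the two pieces $\dom(\pi)$ and $\pi^{n-1}(B)$, bound the integral of $|c_P|_\varphi$ on each piece by $\d_{\varphi,T}(S,\id)$, and then sum.

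On $\dom(\pi)$ the closing cycle agrees with $\pi$, which in turn agrees with $S$, so $c_P(x)=c_S(x)$ pointwise; hence
\[
\int_{\dom(\pi)} |c_P(x)|_\varphi\, d\mu \;=\; \int_{\dom(\pi)} |c_S(x)|_\varphi\, d\mu \;\leq\; \d_{\varphi,T}(S,\id).
\]
For $x\in\pi^{n-1}(B)$ we write $y = \pi^{-(n-1)}(x)\in B$; since $\pi = S$ on $\dom(\pi)$, we have $x = S^{n-1}(y)$, so $P(x) = y = S^{-(n-1)}(x)$. Iterating the cocycle identity \eqref{eq:cocycle identity} gives
\[
c_P(x) \;=\; c_{S^{-(n-1)}}(x) \;=\; -\sum_{k=0}^{n-2} c_S(\pi^k(y)),
\]
and because $c_S = n_{T,\supp(\pi)}\geq 0$ on $\supp(\pi)$, every term in the sum is non-negative. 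Applying subadditivity and monotonicity of $\varphi$ yields
\[
|c_P(x)|_\varphi \;\leq\; \sum_{k=0}^{n-2} |c_S(\pi^k(y))|_\varphi.
\]

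Now I integrate the previous inequality over $\pi^{n-1}(B)$, perform the measure-preserving change of variable $y=\pi^{-(n-1)}(x)$ to get an integral on $B$, and then use the fact that $y\mapsto \pi^k(y)$ is a measure-preserving bijection from $B$ onto $\pi^k(B)$ to recognise the resulting sum of integrals as $\int_{\dom(\pi)} |c_S(z)|_\varphi\, d\mu$, which is at most $\d_{\varphi,T}(S,\id)$. Combining this with the bound on $\dom(\pi)$ and noting that $c_P$ vanishes outside $\supp(\pi)$ gives $\d_{\varphi,T}(P,\id)\leq 2\,\d_{\varphi,T}(S,\id)$, as desired. The last assertion that $P\in[T]_\varphi$ is immediate from Lemma \ref{lem.distanceonphifullgroup}\eqref{item.fullgroupwithmetric} once one knows that $T_{\supp(\pi)}\in[T]_\varphi$; this is automatic because $c_S = n_{T,\supp(\pi)}$ is integrable by Kac's lemma \eqref{equation.kac}, so $\varphi$-integrable by subadditivity of $\varphi$. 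The only genuinely delicate step is the bookkeeping in the second integration; everything else follows directly from the cocycle identity and the defining properties of metric-compatible functions.
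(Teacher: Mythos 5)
Your proposal is correct and follows essentially the same route as the paper: split $\supp(\pi)$ into $\dom(\pi)$ (where $c_P=c_{T_{\supp(\pi)}}$ pointwise) and $\pi^{n-1}(B)$ (where the cocycle identity expresses $c_P$ as a signed sum of $n-1$ values of $c_{T_{\supp(\pi)}}$ along the cycle), then use that $B\sqcup\pi(B)\sqcup\dots\sqcup\pi^{n-2}(B)=\dom(\pi)$ to resum the second piece into a single integral bounded by $\d_{\varphi,T}(T_{\supp(\pi)},\id)$. The only cosmetic difference is that you phrase the second bound via $c_{S^{-(n-1)}}$ while the paper uses $c_{P^{n-1}}$, which is the same computation since $P$ and $T_{\supp(\pi)}$ agree on $\dom(\pi)$.
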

	\begin{proof} Let $n$ be the length of the pre-cycle $\pi$, let $A\coloneqq\supp(\pi)$ and let $B\coloneqq \dom (\pi)\setminus\rng(\pi)$ the basis of $\pi$. Since $\pi$ is induced by $T$, for all $x\in \dom(\pi)$, one has $\pi(x)=P(x)=T_A(x)$. This implies that $c_P(x)=c_{T_A}(x)$ for all $x\in\dom(\pi)$. Thus,
		\begin{align*}
			\d_{\varphi,T}(P,\id) &= \int_{\dom(\pi)}\lvert c_{T_A}(x)\rvert_\varphi d\mu+ \int_{P^{n-1}(B)}\lvert c_{P^{-(n-1)}}(x)\rvert_\varphi d\mu\\
			&\leq \d_{\varphi,T}(T_{A},\id) + \int_{B}\lvert c_{P^{n-1}}(x)\rvert_\varphi d\mu.
		\end{align*}
		Moreover, for all $x\in B$, the cocycle identity yields
		\begin{align*}
			\lvert c_{P^{n-1}}(x)\rvert_\varphi 
			&\leq \lvert c_P(x)\rvert_\ph + \lvert c_P(P(x))\rvert_\ph +\dots \lvert c_P(P^{n-2}(x))\rvert_\ph.
		\end{align*}
		We now use the fact that $P$ preserves the measure and that $\dom(\pi)=B\sqcup P(B)\sqcup\dots\sqcup P^{n-2}(B)$ to get
		\[\int_{B}\lvert c_{P^{n-1}}(x)\rvert_\varphi d\mu\leq \int_{\dom(\pi)}\lvert c_{P}(x)\rvert_\varphi d\mu \leq \int_X\lvert c_{T_A}(x)\rvert_\ph d\mu,\]
		which concludes the proof. 
	\end{proof}
	
	Kac's lemma, that is Equation \eqref{equation.kac}, implies that for every measurable $A\subseteq X$, the first return map $T_A$ belongs to $[T]_1$, which is contained in $[T]_\ph$ for every metric-compatible function $\ph$ by Corollary \ref{cor: inclusions}. We will need a more quantitative version of this fact.
	
	\begin{lem}\label{lem.distanceinduiteaid}
		Let $\varphi \colon  \R_+\to\R_+$ be a metric-compatible function. Let $T\in\Aut(X,\mu)$ be an aperiodic transformation, let $A\subseteq X$ be a measurable subset and let $C>0$. Then 
		$$\mathsf \d_{\varphi,T}(T_A,\id)\leq C\varphi(1) \mu(A) + \sup_{t> C}\frac{\varphi(t)}{t}.$$
	\end{lem}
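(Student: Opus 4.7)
The plan is to split the integral defining $\d_{\varphi,T}(T_A,\id)$ into two regions according to whether the return time to $A$ is small or large, handling the first region via subadditivity and the second via Kac's lemma.

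First, observe that since $T$ is aperiodic, the $T$-cocycle of $T_A$ satisfies $c_{T_A}(x)=n_{T,A}(x)$ for $x\in A$ and $c_{T_A}(x)=0$ for $x\notin A$. Hence
\[\d_{\varphi,T}(T_A,\id)=\int_X \varphi(|c_{T_A}(x)|)d\mu=\int_A \varphi(n_{T,A}(x))d\mu.\]
I would decompose this integral as
\[\int_{\{x\in A\,:\,n_{T,A}(x)\leq C\}}\varphi(n_{T,A}(x))d\mu+\int_{\{x\in A\,:\,n_{T,A}(x)> C\}}\varphi(n_{T,A}(x))d\mu.\]

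For the first summand, I would use subadditivity and monotonicity of $\varphi$ to get $\varphi(n)\leq n\varphi(1)$ for every positive integer $n$ (by an easy induction using $\varphi(n)\leq \varphi(n-1)+\varphi(1)$), and therefore $\varphi(n_{T,A}(x))\leq C\varphi(1)$ on the region where $n_{T,A}(x)\leq C$. This immediately yields an upper bound of $C\varphi(1)\mu(A)$ for the first piece.

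For the second summand, on the set where $n_{T,A}(x)>C$ I would write $\varphi(n_{T,A}(x))=n_{T,A}(x)\cdot\frac{\varphi(n_{T,A}(x))}{n_{T,A}(x)}\leq n_{T,A}(x)\cdot\sup_{t>C}\frac{\varphi(t)}{t}$, pull the supremum out of the integral, and then apply Kac's lemma (Equation \eqref{equation.kac}) which bounds $\int_X n_{T,A}(x)d\mu$ by $1$. Summing the two bounds gives the desired inequality. There is no real obstacle here beyond keeping the case-split clean; the proof is essentially a standard application of Kac's lemma, and the role of the metric-compatibility hypothesis is precisely to guarantee the subadditive bound $\varphi(n)\leq n\varphi(1)$ used on the region of small return times.
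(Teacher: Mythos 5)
Your proposal is correct and follows essentially the same argument as the paper: decompose the integral over $A$ according to whether $n_{T,A}(x)\leq C$, bound the first piece by $C\varphi(1)\mu(A)$ using the subadditivity estimate $\varphi(n)\leq n\varphi(1)$, and bound the second piece by $\sup_{t>C}\varphi(t)/t$ times the integral of the return time, controlled by Kac's lemma. No gaps.
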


	\begin{proof}
		Recall that the $T$-cocycle of $T_A$ is the return time $n_{T,A}$, which is non-negative. Set $B\coloneqq\{x\in A\colon n_{T,A}(x)\leq C\}$. We have
		\begin{align*}
			\d_{\varphi,T}(T_A,\id) 
			&=\int_{B}\varphi(n_{T,A}(x))d\mu+\int_{A\setminus B}\varphi(n_{T,A}(x))d\mu \\
			&\leq C\varphi(1)\mu(B)+\int_{A\setminus B}\frac{\varphi(n_{T,A}(x))}{n_{T,A}}n_{T,A}(x)d\mu\\
			&\leq C\varphi(1)\mu(A)+\left(\sup_{t> C}\frac{\varphi(t)}{t}\right)\int_{A\setminus B}n_{T,A}(x)d\mu
		\end{align*}
		and the last integral is at most $1$ by Kac's lemma, see Equation \eqref{equation.kac}.
	\end{proof}
	
	\begin{cor}\label{cor.distanceinduiteaid}
		Let $\varphi \colon  \R_+\to\R_+$ be a sublinear metric-compatible function and let $T\in\Aut(X,\mu)$ be an aperiodic transformation.
		Then $\d_{\ph, T}(T_A,\id)$ tends to $0$ as $\mu(A)$ approaches $0$.
	\end{cor}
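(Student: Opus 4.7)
The plan is to deduce the corollary directly from the quantitative estimate in Lemma \ref{lem.distanceinduiteaid}, which gives
\[\d_{\varphi,T}(T_A,\id)\leq C\varphi(1) \mu(A) + \sup_{t> C}\frac{\varphi(t)}{t}\]
for every $C>0$. The point is that this inequality decouples the two bad regimes: the first term controls the contribution of points with small return time (linear in $\mu(A)$, with a constant depending on the cutoff $C$), while the second term controls the contribution of points with large return time (bounded uniformly in $A$ thanks to Kac's lemma, with a factor that we can make small by sending $C$ to infinity, using sublinearity).

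Concretely, I would proceed as follows. Fix $\eps>0$. First, I would use the sublinearity of $\ph$, namely $\lim_{t\to+\infty}\ph(t)/t=0$, to choose $C=C(\eps)>0$ large enough so that
\[\sup_{t> C}\frac{\ph(t)}{t}<\frac{\eps}{2}.\]
Note that this choice depends only on $\ph$ and $\eps$, not on $A$ or $T$. Then, with $C$ now fixed, I would choose $\delta>0$ so that $C\ph(1)\delta<\eps/2$. Applying Lemma \ref{lem.distanceinduiteaid} for any measurable $A\subseteq X$ with $\mu(A)<\delta$ yields
\[\d_{\ph,T}(T_A,\id)<\frac{\eps}{2}+\frac{\eps}{2}=\eps,\]
which is precisely the statement that $\d_{\ph,T}(T_A,\id)\to 0$ as $\mu(A)\to 0$.

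There is essentially no obstacle here: the work has already been done in Lemma \ref{lem.distanceinduiteaid}, and the corollary is just the observation that the two terms in the bound can be made simultaneously small by choosing the cutoff $C$ and then $\mu(A)$ in the right order. The only subtle point, which is already baked into the statement of Lemma \ref{lem.distanceinduiteaid}, is that one must use $\sup_{t>C}\ph(t)/t$ rather than $\ph(C)/C$ alone, so that the bound remains valid uniformly over all return times exceeding $C$; sublinearity is exactly what ensures that this supremum tends to $0$.
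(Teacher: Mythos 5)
Your proof is correct and follows exactly the same route as the paper's: both apply Lemma \ref{lem.distanceinduiteaid}, first choosing the cutoff $C$ via sublinearity so that $\sup_{t>C}\ph(t)/t$ is small, and then shrinking $\mu(A)$ to control the term $C\ph(1)\mu(A)$. The only difference is cosmetic bookkeeping ($\eps/2+\eps/2$ versus the paper's $2\eps$).
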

	
	\begin{proof}
		Fix $\varepsilon >0$. By sublinearity, let $C>0$ such that for all $t>C$, we have $\ph(t)/t<\varepsilon$. For all measurable $A\subseteq X$, if $\mu(A)<\varepsilon/C\ph(1)$, then $\d_{\ph,T}(T_A,\id)<2\varepsilon$, which concludes the proof. 
	\end{proof}
	
	\begin{rmq}
		In particular, by taking $\varphi$ bounded, we recover the well-known fact that $d_u(T_A,\id)$ tends to $0$ as $\mu(A)$ approaches $0$ (see Lemma \ref{lem: keane continuity}).
	\end{rmq}

	The following lemma is a direct consequence of Rokhlin's lemma.
	
	\begin{lem}\label{lem.existencepre-cycle}
		Let $T\in\Aut(X,\mu)$ be aperiodic and $A\subseteq X$ be measurable. For all $\varepsilon >0$ and all integers $n\geq 2$, there exists a pre-cycle $\pi$ of length $n$, induced by $T$, such that $\supp(\pi)\subseteq A$ and $\mu(A\setminus\supp(\pi))\leq\varepsilon$. 
	\end{lem}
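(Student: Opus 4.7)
The plan is to invoke Rokhlin's lemma applied to the induced transformation $T_A$, viewed as an aperiodic measure-preserving transformation of the probability space $(A,\mu|_A/\mu(A))$. (The case $\mu(A)=0$ is trivial: take $\pi$ with empty domain.) Aperiodicity of $T_A|_A$ follows from aperiodicity of $T$, since $T_A^k(x)=x$ for $x\in A$ forces $T^m(x)=x$ for $m$ equal to the sum of the first $k$ return times of $x$ to $A$.

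Given $\varepsilon>0$ and $n\geq 2$, Rokhlin's lemma applied to $T_A|_A$ with height $n$ and error $\varepsilon/\mu(A)$ (normalized) produces a measurable set $B\subseteq A$ such that $B,T_A(B),\ldots,T_A^{n-1}(B)$ are pairwise disjoint and
\[\mu\Bigl(A\setminus\bigsqcup_{i=0}^{n-1}T_A^i(B)\Bigr)\leq \varepsilon.\]
I would then define the candidate pre-cycle as the restriction $\pi \coloneqq T_A|_{\dom(\pi)}$ with $\dom(\pi)\coloneqq\bigsqcup_{i=0}^{n-2}T_A^i(B)$. The range is $\rng(\pi)=\bigsqcup_{i=1}^{n-1}T_A^i(B)$, so that $\dom(\pi)\setminus\rng(\pi)=B$, and $\supp(\pi)=\bigsqcup_{i=0}^{n-1}T_A^i(B)\subseteq A$. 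The two partition conditions in the definition of a pre-cycle of length $n$ with basis $B$ are then immediate from disjointness of the tower levels, and the estimate $\mu(A\setminus\supp(\pi))\leq\varepsilon$ is built in.

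The one point that is not purely formal is verifying that $\pi$ is \emph{induced by $T$}, i.e.\ that $\pi(x)=T_{\supp(\pi)}(x)$ for every $x\in\dom(\pi)$, since a priori $\pi$ is only defined in terms of $T_A$. The key observation is that if $x\in\dom(\pi)$ then $T_A(x)\in\rng(\pi)\subseteq\supp(\pi)$; because $\supp(\pi)\subseteq A$, the first point of the forward $T$-orbit of $x$ lying in $\supp(\pi)$ must in particular lie in $A$, hence it cannot occur strictly before $T_A(x)$. Since $T_A(x)$ already belongs to $\supp(\pi)$, we conclude $T_{\supp(\pi)}(x)=T_A(x)=\pi(x)$. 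This last verification is the only substantive step; the rest is just unpacking the definitions of Rokhlin tower and pre-cycle.
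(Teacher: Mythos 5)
Your proposal is correct and follows essentially the same route as the paper: apply Rokhlin's lemma to the induced transformation $T_A$ to get a tower $B,T_A(B),\dots,T_A^{n-1}(B)$ filling $A$ up to measure $\varepsilon$, and take $\pi$ to be the restriction of $T_A$ to the first $n-1$ levels. The only difference is that you spell out the (correct) verification that $T_{\supp(\pi)}=T_A$ on $\dom(\pi)$, which the paper dispatches with ``induced by $T_A$ and thus by $T$''.
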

	\begin{proof}
		Since $T$ is aperiodic, $T_A$ is aperiodic on its support.  
		We apply Rokhlin's lemma to $T_A$ to find a measurable subset $B\subseteq A$ such that $B,T_A(B),\dots, (T_A)^{n-1}(B)$ are pairwise disjoint and 
		\[\mu\Big(A\setminus \big(B\sqcup\dots\sqcup (T_A)^{n-1}(B)\big)\Big)\leq\varepsilon.\]
		Then the restriction of $T_A$ to $B\sqcup\dots\sqcup (T_A)^{n-2}(B)$ is a pre-cycle of length $n$, which is induced by $T_A$ and thus by $T$. Finally, its support satisfies the desired assumptions.
	\end{proof}
	
	We are now ready to prove the existence of $n$-cycles with prescribed support in $\varphi$-integrable full groups.

	\begin{proof}[Proof of Theorem \ref{thm.existencecycle}]
		Let $T\in\Aut(X,\mu)$ be an aperiodic element, let $A\subseteq X$ be a measurable subset and let $n\geq 2$. Since $\ph$ is sublinear, we can and do fix a sequence $(C_k)_{k\geq 1}$ of strictly positive numbers such that 
		\[\sup_{t>C_k}\frac{\varphi(t)}{t}\leq 2^{-k}\text{ for all }k\geq 1.\]
		Then, we use Lemma \ref{lem.existencepre-cycle} to construct inductively a sequence $(\pi_k)_{k\geq 0}$ of pre-cycles of length $n$ induced by $T$, whose supports are pairwise disjoint subsets of $A$ and such that for all $k\geq 1$,
		\[\mu\Big(A\setminus \big( \supp(\pi_0)\sqcup\dots\sqcup\supp(\pi_{k-1})\big)\Big)\leq \frac{1}{2^{k}C_{k}}.\]
		This inequality implies in particular that for all $k\geq 1$, we have $\mu(\supp(\pi_k))\leq 1/(2^kC_k)$. Let $P_k$ be the closing cycle of $\pi_k$ and let $P\in\Aut(X,\mu)$ be the $n$-cycle defined by $P(x)\coloneqq P_k(x)$ for $x\in\supp(P_k)$ and $P(x)\coloneqq x$ for $x\notin A$. The support of $P$ is equal to $A$ and by Lemma \ref{lem.cycleinduced} and \ref{lem.distanceinduiteaid}, we have
		\begin{align*}
			\d_{\varphi,T}(P,\id)& =\sum_{k\geq 0}\d_{\varphi,T}(P_k,\id) \\
			&\leq 2 \sum_{k\geq 0}\d_{\varphi,T}(T_{\supp(\pi_k)},\id) \\
			&\leq 2\d_{\varphi,T}(T_{\supp(\pi_0)},\id)+2\sum_{k\geq 1}\left(\varphi(1)C_k\mu(\supp(\pi_k))+\sup_{t>C_k}\frac{\varphi(t)}{t}\right).
		\end{align*}
		The second term is by construction a converging series, so we are done. 
	\end{proof}

	\subsection{Construction of \texorpdfstring{$\varphi$}{phi}-integrable orbit equivalences}\label{sub.phifullgroup}
	
	Let us now prove Theorem \ref{thmIntro:belinskaya opti sym}.
	
	\begin{thm}\label{thm.phifullgroup} Let $\varphi \colon \R_+\to\R_+$ be a sublinear function. Let $T\in\Aut(X,\mu)$ be ergodic. For all $n\geq 2$, there exists $U\in\Aut(X,\mu)$ such that $T$ and $U$ are $\varphi$-integrably orbit equivalent and $U^n$ is not ergodic.
	\end{thm}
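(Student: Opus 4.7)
Since the theorem only concerns the integrability class of the cocycles, Lemma \ref{lem.free metric-compatible} together with Remark \ref{rmk.restriction to metric-compatible} let me assume that $\varphi$ is itself metric-compatible (and sublinear): any sublinear metric-compatible function dominating the original $\varphi$ will do, since $\psi$-integrability then implies $\varphi$-integrability. I then follow the blueprint sketched in the introduction: apply Theorem \ref{thm.existencecycle} with $A = X$ to obtain an $n$-cycle $P \in [T]_\varphi$ with full support, pick a fundamental domain $D$ for $P$, and set $U \coloneqq T_D P$.

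By Lemma \ref{lem.sameorbits}, $T$ and $U$ have the same orbits, and the first return time $n_{U,D}$ is constantly equal to $n$ on $D$. In particular $U^n$ sends $D$ to $D$; since $\mu(D) = 1/n \in (0,1)$, the transformation $U^n$ is not ergodic. It remains to check $\varphi$-integrability of both orbit equivalence cocycles. The direction $U \in [T]_\varphi$ is immediate: $T_D \in [T]_1 \subseteq [T]_\varphi$ by Corollary \ref{cor: inclusions}, $P \in [T]_\varphi$ by construction, and $[T]_\varphi$ is a group.

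The main work lies in showing $T \in [U]_\varphi$. To this end, I would exploit the block decomposition $X = \bigsqcup_{i=0}^{n-1} P^i(D)$ and, for each $x \in X$, write $i(x) \in \{0,\dots,n-1\}$ and $z(x) \coloneqq P^{-i(x)}(x) \in D$ for the block index and the $D$-representative of $x$, and similarly $j(x)$, $w(x) \coloneqq P^{-j(x)}(T(x)) \in D$ for $T(x)$. The identity $U^{nm+r}(z) = P^r(T_D^m(z))$ for $z \in D$ and $r \in \{0,\dots,n-1\}$ (which follows from $U_D = T_D$, Lemma \ref{lem.sameorbits}) identifies the $U$-cocycle $c_T^U$ of $T$ as
\[c_T^U(x) = n\,m(x) + j(x) - i(x),\]
where $m(x) \in \Z$ is uniquely determined by $T_D^{m(x)}(z(x)) = w(x)$. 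Comparing the $T$-positions of $z(x)$ and $w(x)$ in their common $T$-orbit via the cocycle identity in $[T]$ yields the pointwise bound $|m(x)| \leq |c_{P^{i(x)}}(z(x))| + 1 + |c_{P^{-j(x)}}(T(x))|$. Combined with $|j(x) - i(x)| \leq n-1$ and the subadditivity of $\varphi$, this produces
\[\varphi(|c_T^U(x)|) \leq n\,\varphi(|c_{P^{i(x)}}(z(x))|) + n\,\varphi(|c_{P^{-j(x)}}(T(x))|) + (2n-1)\,\varphi(1).\]

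After integration, the two non-constant terms decompose, by partitioning $X$ into the $n$ level sets of $i$ (directly for the first term; for the second, after the measure-preserving change of variable $y = T(x)$ which converts $j(T^{-1}(y))$ into $i(y)$), into finite sums of quantities of the form $\d_{\varphi,T}(P^{\pm i}, \id)$, each finite because $P^{\pm i} \in [T]_\varphi$. The main obstacle is precisely this last step: $c_T^U$ cannot be read off any simple algebraic identity between $T$, $T_D$ and $P$, so one must unpack the $U$-orbit parameterization through the blocks $P^i(D)$ by hand and extract a pointwise bound amenable to integration, reducing the $\varphi$-integrability of $c_T^U$ to that of the integer powers of $P$ which we control through the group structure of $[T]_\varphi$.
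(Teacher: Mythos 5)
Your construction is exactly the paper's: reduce to a metric-compatible $\varphi$ via Lemma \ref{lem.free metric-compatible}, take a full-support $n$-cycle $P\in[T]_\varphi$ from Theorem \ref{thm.existencecycle}, set $U\coloneqq T_DP$ for a fundamental domain $D$ of $P$, invoke Lemma \ref{lem.sameorbits}, and conclude that $U^n$ is non-ergodic from the $U^n$-invariance of $D$ (with $\mu(D)=1/n$); the easy direction $U\in[T]_\varphi$ is also handled identically. The only place you diverge is the verification that $T\in[U]_\varphi$, and your version is correct. The paper isolates the inequality $\lvert c^U_V(y)\rvert\leq n\lvert c^T_V(y)\rvert$ for $V\in[T]$ and $y\in D$ with $V(y)\in D$ (valid because $U$-displacements between points of $D$ are multiples of $n$ and $U^n$ restricted to $D$ equals the first return map $T_D$), and applies it to $V=U^lTU^{-k}$ with $0\leq k,l\leq n-1$ chosen so that $U^k(x)$ and $U^l(T(x))$ lie in $D$; finiteness then follows because $U^lTU^{-k}\in[T]_\varphi$, using the group structure of $[T]_\varphi$. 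You instead write $c^U_T(x)=n\,m(x)+j(x)-i(x)$ explicitly and bound $\lvert m(x)\rvert$ by the $T$-displacement from $z(x)$ to $w(x)$, which equals $c_{P^{-j(x)}}(T(x))+1+c_{P^{i(x)}}(z(x))$ and dominates the number of intermediate $T_D$-steps; this reduces everything to $P^{\pm i}\in[T]_\varphi$, i.e.\ again to the group structure of $[T]_\varphi$, and the integration over the level sets of $i$ (with the change of variable $y=T(x)$ for the second term) is routine. The two arguments carry the same content — both exploit that between two points of $D$ the $U$-distance is $n$ times the $T_D$-distance, hence at most $n$ times the $T$-distance — but the paper's claim-based formulation avoids naming $m(x)$, $i(x)$, $j(x)$ and is a bit lighter on bookkeeping, while yours makes the cocycle formula fully explicit.
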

	\begin{proof}
		By Lemma \ref{lem.free metric-compatible}, there is a sublinear metric-compatible function $\psi$ such that $\ph(t)\leq \psi(t)$ for all $t$ large enough. 
		In particular, $\psi$-integrable orbit equivalence implies $\ph$-orbit equivalence (cf.\ Remark \ref{rmk.restriction to metric-compatible}). Hence if the theorem holds for $\psi$ then it holds for $\ph$.
		Therefore, by replacing $\ph$ by $\psi$, we may and do assume that $\ph$ is a metric-compatible function. 
		
		By Theorem \ref{thm.existencecycle}, there exists an $n$-cycle $P\in [T]_\varphi$ whose support is $X$. We fix a fundamental domain $D$ for $P$ and we let $U\coloneqq T_DP$. 
		By Lemma \ref{lem.sameorbits} the following hold:
		\begin{itemize}
			\item the first return maps $U_D$ and $T_D$ coincide: $U_D=T_D$;
			\item for all $x\in D$, we have \(U_D(x)=U^n(x)\);
			\item $T$ and $U$ have same orbits.
		\end{itemize}
		By the second item, the set $D$ is $U^n$-invariant. So $U^n$ is not ergodic.
		
		We will now prove that $T$ and $U$ are $\varphi$-integrably orbit equivalent. Since $T$ and $U$ have same orbits, we are left to show that $T\in [U]_\varphi$ and $U\in [T]_\varphi$. As a direct consequence of Kac's lemma, see Equation \eqref{equation.kac}, we have that $T_D\in [T]_1\leq [T]_\varphi$ and therefore $U=T_DP\in [T]_\varphi$. 
		
		We now prove that $T\in [U]_\varphi$. 
		In the sequel, if a measure-preserving transformation $V$ belongs to $[T]=[U]$, we shall denote by $c^T_V$ the $T$-cocycle of $V$ and by $c_V^U$ the $U$-cocycle of $V$. 
		\begin{claim*} Let $V\in [T]$. Then for all $y\in D$ such that $V(y)\in D$,
			\[\left\lvert c_V^U(y)\right\rvert \leq n\left\lvert c^T_V(y)\right\rvert.\]
		\end{claim*}
		\begin{proof}[Proof of Claim]
			Note that since $y$ and $V(y)$ belong to $D$, any $i\in \Z$ such that $U^i(z)=V(z)$ must be a multiple of $n$.
			If we combine this with the fact that $U_D(z)=U^n(z)$ for all $z\in D$ and that \mbox{$U_D=T_D$}, we obtain:
			\begin{align*}
				\left\lvert c^U_{V}(y)\right\rvert=&\min\lbrace |i| \colon U^i(y)=V(y)\rbrace\\
				=& n\min\lbrace |i|\colon U_D^i(y)=V(y)\rbrace\\
				=& n\min\lbrace |i|\colon T_D^i(y)=V(y)\rbrace\\
				\leq & n\min\lbrace |i|\colon T^i(y)=V(y)\rbrace\\
				\leq & n\left\lvert c^T_{V}(y)\right\rvert.\qedhere
			\end{align*}
		\end{proof}
		Let $x\in X$.
		By definition of $U$, there are two integers $0\leq k,l\leq n-1$ such that $U^k(x)\in D$ and $U^l(T(x))\in D$.
		By the cocycle identity,
		\[c^U_{U^lTU^{-k}}(U^k(x))=c^U_{U^l}(T(x))+c^U_T(x)+c^U_{U^{-k}}(U^{k}(x))=l+c^U_T(x)-k.\]
		Hence
		\[\left\lvert c^U_T(x) \right\rvert\leq \left\lvert c^U_{U^lTU^{-k}}(U^k(x))\right\rvert+n.\]
		Using the claim for $V=U^{l}TU^{-k}$ and $y=U^k(x)$, we obtain
		\[\left\lvert c^U_T(x)\right\rvert\leq n\left\lvert c^T_{U^lTU^{-k}}(U^k(x))\right\rvert+n.\]
		We now apply $\varphi$, use its subadditivity and integrate over $X$ to get 
		\[ \int_X\left\lvert c^U_{T}(x)\right\rvert_\ph d\mu\leq \max_{0\leq k,l\leq n-1}\int_{X} n\left\lvert c^T_{U^lTU^{-k}}(U^k(x))\right\rvert_\ph d\mu+\varphi(n),\]
		which is bounded since $U^lTU^{-k}\in [T]_\ph$. Hence $T\in[U]_\varphi$ and this concludes the proof of the theorem.
	\end{proof}
	
	The following direct corollary says that the analogue of Belinskaya's theorem for $\varphi$-integrable orbit equivalence does not hold as soon as $\varphi$ is sublinear.
	
	\begin{cor}\label{cor.belinskayaoptimal} Let $\varphi \colon \R_+\to\R_+$ be a sublinear function. Let $T\in\Aut(X,\mu)$ be an ergodic transformation and assume that $T^n$ is ergodic for some $n\geq 2$. Then there exists $U\in\Aut(X,\mu)$ such that $T$ and $U$ are $\varphi$-integrably orbit equivalent but not flip-conjugate.
	\end{cor}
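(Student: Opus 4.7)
The plan is to derive this corollary directly from Theorem \ref{thm.phifullgroup} by combining it with the preservation of ergodicity of powers under flip-conjugacy. First, I would apply Theorem \ref{thm.phifullgroup} to the given ergodic $T$ and the fixed integer $n\geq 2$ from the hypothesis. This yields a transformation $U\in\Aut(X,\mu)$ such that $T$ and $U$ are $\varphi$-integrable orbit equivalent and $U^n$ is not ergodic.

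Next, I would argue that $T$ and $U$ cannot be flip-conjugate. Suppose for contradiction that there exists $S\in\Aut(X,\mu)$ such that either $STS^{-1}=U$ or $ST^{-1}S^{-1}=U$. In the first case, $U^n=ST^nS^{-1}$, so $U^n$ would be conjugate to $T^n$, and conjugacy preserves ergodicity; since $T^n$ is ergodic by hypothesis, $U^n$ would be ergodic, contradicting the conclusion of Theorem \ref{thm.phifullgroup}. In the second case, $U^n = S T^{-n} S^{-1} = S(T^n)^{-1}S^{-1}$. Since $T^n$ is ergodic and ergodicity is invariant under taking inverse (a set is $T^n$-invariant if and only if it is $T^{-n}$-invariant), $(T^n)^{-1}$ is also ergodic, hence so is $U^n$, again a contradiction.

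Thus $T$ and $U$ are $\varphi$-integrable orbit equivalent but not flip-conjugate, which is exactly what we need. There is no serious obstacle here: the entire content is packaged inside Theorem \ref{thm.phifullgroup}, and the corollary only needs the elementary observation that the ergodicity of a power is a flip-conjugacy invariant. The only mildly careful point is to remember to handle both cases in the definition of flip-conjugacy, using that $T^{-n}$ inherits ergodicity from $T^n$.
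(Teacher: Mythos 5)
Your proposal is correct and follows exactly the paper's own route: apply Theorem \ref{thm.phifullgroup} to obtain $U$ with $U^n$ non-ergodic, then note that flip-conjugacy of $T$ and $U$ would force $U^n$ to be ergodic. You merely spell out the two cases of flip-conjugacy (conjugate to $T$ or to $T^{-1}$) that the paper leaves implicit.
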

	\begin{proof}
		Let $n\geq 2$ be such that $T^n$ is ergodic. 
		By the previous theorem, we find $U\in\Aut(X,\mu)$ such that $U$ is $\varphi$-integrably orbit equivalent to $T$ and $U^n$ is not ergodic. In particular $U$ cannot be flip-conjugate to $T$ because otherwise $U^n$ would be flip-conjugate to $T^n$, which is ergodic.
	\end{proof}
	
	\begin{rmq}
	     Note that $T$ and $U$ do not have the same spectrum, as the spectrum of $U$ contains $\exp(2i\pi/n)$ whereas the spectrum of $T$ doesn't. So the spectrum is not an invariant of $\varphi$-integrable orbit equivalence.
	\end{rmq}
	
	\begin{qu}\label{qu.neverrigid}
		Let $\varphi \colon  \R_+\to\R_+$ be a sublinear metric-compatible function. Let $T\in\Aut(X,\mu)$ be an ergodic transformation such that $T^n$ is non-ergodic for all $n\geq 2$. Does there exist $U\in\Aut(X,\mu)$ such that $T$ and $U$ are $\varphi$-integrably orbit equivalent but not flip-conjugate?
	\end{qu}
	
	As we will see in Section \ref{sub.aper}, a weaker result holds in full generality: for every ergodic $T\in\Aut(X,\mu)$ and every sublinear metric-compatible function, there is $U\in[T]_\varphi$ such that $U$ and $T$ have the same orbits, but are not flip-conjugate. This 
	relies on the Baire category theorem, using the fact that $[T]_\varphi$ is a Polish group (see Section \ref{sec.polish}).
	
	\subsection{Connection to Shannon orbit equivalence}
	
	Let $I$ be a countable set and $f\colon X\to I$ a measurable map. The Shannon entropy of $f$ is the quantity 
	\[H(f)\coloneqq-\sum_{i\in I}\mu(f\inv\{i\})\log\mu(f\inv\{i\}).\]
	
	\begin{df}[Kerr-Li]
		Two aperiodic transformations $T_1,T_2\in\Aut(X,\mu)$ are \defin{Shannon orbit equivalent} if there exists $S\in\Aut(X,\mu)$ such that $ST_1S\inv$ and $T_2$ have the same orbits and
		\[H(c_{ST_1S^{-1}})<+\infty\text{ and }H(c_{T_2})<+\infty,\]
		where $c_{ST_1S^{-1}}$ is the $T_2$-cocycle of $ST_1S^{-1}$ and $c_{T_2}$ is the $ST_1S^{-1}$-cocycle of $T_2$.
	\end{df}
	
	\begin{lem} There are two positive constants $C_1,C_2>0$ such that for any measurable function $f \colon  X\to\Z$, we have 
		\[H(f)\leq C_1\int_X \log (1+\lvert f(x)\rvert)d\mu + C_2.\]
	\end{lem}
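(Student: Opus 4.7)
The plan is to apply the standard Gibbs' inequality (equivalently, the non-negativity of Kullback--Leibler divergence) with a carefully chosen reference probability measure on $\Z$. Concretely, since the series $Z\coloneqq\sum_{i\in\Z}(1+|i|)^{-2}$ converges, we can define a probability measure $q$ on $\Z$ by setting $q_i\coloneqq Z^{-1}(1+|i|)^{-2}$.

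Writing $p_i\coloneqq\mu(f^{-1}\{i\})$, Gibbs' inequality gives
\[H(f)=-\sum_{i\in\Z}p_i\log p_i\leq -\sum_{i\in\Z}p_i\log q_i.\]
The right-hand side equals
\[\sum_{i\in\Z}p_i\bigl(\log Z+2\log(1+|i|)\bigr)=\log Z+2\int_X\log(1+|f(x)|)\,d\mu,\]
so that $C_1=2$ and $C_2=\log Z$ work. (If $f$ is not integrable in the sense that $\int_X\log(1+|f|)\,d\mu=+\infty$ then the bound is trivial; otherwise the computation above is valid term by term.)

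The only substantive step is thus the choice of the reference measure $q$, and the only nuisance is verifying that $Z<+\infty$, which is immediate from comparison with $\sum_{k\geq 1}k^{-2}$. Everything else is a direct computation, so I do not anticipate any real obstacle.
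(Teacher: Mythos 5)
Your proof is correct and is essentially the paper's argument in different clothing: the paper's pointwise inequality $-t\log t\leq st+e^{-s-1}$, applied with $s=2\log(n+1)$, is exactly the term-by-term form of Gibbs' inequality against your reference measure $q_i\propto(1+\lvert i\rvert)^{-2}$, and both routes yield $C_1=2$. The only real difference is that by choosing a reference measure on all of $\Z$ you skip the paper's preliminary reduction to $\N$-valued functions via subadditivity of Shannon entropy, which is a small simplification.
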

	The proof we propose is inspired by a classical proof that integrable functions have finite Shannon entropy, see for instance 
	\cite[Lem.~2.1]{austinBehaviourEntropyBounded2016} or \cite[Fact~1.1.4]{downarowiczEntropyDynamicalSystems2011}.
	
	\begin{proof} Let $f_+\coloneqq\max(f,0)$ and $f_-\coloneqq\min(f,0)$, so that $f=f_++f_-$. We have \[\int_X\log(1+\lvert f\rvert)d\mu=\int_X\log(1+f_+)d\mu +\int_X\log(1-f_-)d\mu.\] 
		By subadditivity, see for instance \cite[Chap.~1]{downarowiczEntropyDynamicalSystems2011}, \[H(f)=H(f_++f_-)\leq H(f_+)+H(-f_-).\] Hence, it is enough to prove the lemma for $f \colon  X\to\N$. So let us fix $f:X\to\N$. For all $n\in\N$, let \mbox{$p_n\coloneqq\mu(f^{-1}\{n\})$}. By definition of the Shannon entropy,
		\[H(f)=-\sum_{n\geq 0}p_n\log p_n.\]
		By elementary calculus, one checks that for all $t>0$ and $s\in \R$, $-t\log t\leq st+e^{-s-1}$. Applying this for $t=p_n$ and $s=2\log(n+1)$ and summing over $n$, we get
		\[H(f)\leq 2\sum_{n\geq 0}p_n\log(1+n)+\sum_{n\geq 0}\frac{e^{-1}}{(n+1)^2}.\]
		To conclude, we observe that $\sum_{n\geq 0}p_n\log(1+n)=\int_X \log (1+ f(x))d\mu.$ 
	\end{proof}
	
	We immediately deduce the following comparison between $\varphi$-integrable orbit equivalence and Shannon orbit equivalence.
	
	\begin{thm}\label{thm.intOEimpliesShannonOE}
		Let $\varphi:\R_+\to\R_+$ be a function such that $\log(1+t)=O(\ph(t))$ as $t\to +\infty$. Then for any aperiodic transformation $T\in\Aut(X,\mu)$, every $S\in[T]$ whose  $T$-cocycle is $\varphi$-integrable has finite Shannon entropy. 
		
		In particular, if two aperiodic transformations $S,T\in\Aut(X,\mu)$ are $\varphi$-integrably orbit equivalent, then they are Shannon orbit equivalent.
	\end{thm}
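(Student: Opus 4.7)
The plan is to combine the preceding lemma (bounding Shannon entropy by the integral of $\log(1+|f|)$) with the hypothesis $\log(1+t)=O(\ph(t))$ in a direct way. First, I would translate the asymptotic comparison into a uniform one: since $\log(1+t)=O(\ph(t))$ as $t\to+\infty$, there exist $t_0>0$ and $K>0$ such that $\log(1+t)\leq K\ph(t)$ for all $t>t_0$. On the compact range $[0,t_0]$ the function $\log(1+t)$ is bounded by some constant $M$, so we get the global estimate
\[
\log(1+t)\leq K\ph(t)+M\quad\text{for all }t\geq 0.
\]

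Next, given $S\in[T]$ with $\ph$-integrable $T$-cocycle $c_S$, I would substitute $t=|c_S(x)|$ and integrate over $X$ to obtain
\[
\int_X\log(1+|c_S(x)|)\,d\mu\leq K\int_X\ph(|c_S(x)|)\,d\mu+M<+\infty,
\]
which is finite precisely because $S\in[T]_\ph$. Then the preceding lemma yields
\[
H(c_S)\leq C_1\int_X\log(1+|c_S(x)|)\,d\mu+C_2<+\infty,
\]
proving the first assertion.

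For the ``in particular'' part, suppose $S,T\in\Aut(X,\mu)$ are $\ph$-integrable orbit equivalent via some $R\in\Aut(X,\mu)$. By definition, both cocycles $c_{RSR^{-1}}$ (computed with respect to $T$) and $c_T$ (computed with respect to $RSR^{-1}$) are $\ph$-integrable. Applying the first assertion to $RSR^{-1}\in[T]_\ph$ and symmetrically to $T\in[RSR^{-1}]_\ph$ (which is an aperiodic transformation since orbit equivalence preserves aperiodicity), we conclude that both cocycles have finite Shannon entropy, which is exactly the definition of Shannon orbit equivalence. No step here looks substantial: the whole argument is essentially a one-line domination followed by an application of the previous lemma, with the only minor point being the need to convert the asymptotic comparison into a global inequality valid on all of $\R_+$.
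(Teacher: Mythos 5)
Your proposal is correct and matches the paper's intended argument: the theorem is stated there as an immediate consequence of the preceding entropy lemma, exactly via the domination $\log(1+t)\leq K\ph(t)+M$ and integration that you spell out. The only thing you add is the explicit (and harmless) conversion of the asymptotic comparison into a global inequality, which is fine since $\mu$ is a probability measure.
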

	\begin{rmq}
		Note that for every $p\in(0,+\infty)$, we have $\log(1+t)=O(t^p)$ as $t\to +\infty$. Therefore $\LL^p$ orbit equivalence implies Shannon orbit equivalence for measure-preserving transformations.
	\end{rmq}
	In \cite{kerrEntropyShannonOrbit2019}, Kerr and Li asked whether Shannon orbit equivalence of ergodic transformations implies flip-conjugacy. We prove that it is not the case.
	
	\begin{thm}\label{thm.reponsekerrli}
		Let $T\in\Aut(X,\mu)$ be an ergodic transformation, assume that $T^n$ is ergodic for some $n\geq 2$. Then there exists $U\in\Aut(X,\mu)$ such that $T$ and $U$ are Shannon orbit equivalent but not flip-conjugate. 
	\end{thm}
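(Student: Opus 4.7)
The plan is to directly combine the two main building blocks already established in the paper. The key observation is that Theorem \ref{thm.reponsekerrli} has exactly the same hypotheses on $T$ as Corollary \ref{cor.belinskayaoptimal}, and its conclusion is formally weaker once one knows that $\varphi$-integrable orbit equivalence implies Shannon orbit equivalence for a suitable $\varphi$.

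First, I would fix a sublinear function $\varphi\colon\R_+\to\R_+$ that dominates $\log(1+t)$ asymptotically. The simplest choice is $\varphi(t) \coloneqq \log(1+t)$ itself, which is sublinear since $\log(1+t)/t\to 0$ as $t\to+\infty$, and trivially satisfies $\log(1+t)=O(\varphi(t))$. (If one prefers a metric-compatible choice, $\varphi(t)=\log(1+t)$ is already concave, vanishes only at $0$, and is non-decreasing.)

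Next, I would apply Corollary \ref{cor.belinskayaoptimal} to this $\varphi$ and the given ergodic $T$ satisfying the extra assumption that $T^n$ is ergodic for some $n\geq 2$. This produces $U\in\Aut(X,\mu)$ such that $T$ and $U$ are $\varphi$-integrable orbit equivalent but not flip-conjugate. In particular, there exists $S\in\Aut(X,\mu)$ such that $STS^{-1}$ and $U$ have the same orbits and both orbit-equivalence cocycles are $\varphi$-integrable.

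Finally, I would invoke Theorem \ref{thm.intOEimpliesShannonOE}: since $\log(1+t)=O(\varphi(t))$, the condition of $\varphi$-integrable orbit equivalence implies Shannon orbit equivalence. Hence $T$ and $U$ are Shannon orbit equivalent while failing to be flip-conjugate, concluding the proof. There is no real obstacle here — all the content has been pushed into Corollary \ref{cor.belinskayaoptimal} (which in turn builds on the cycle construction of Section \ref{sub.cycle}) and into the Shannon-versus-$\varphi$-integrability comparison of Theorem \ref{thm.intOEimpliesShannonOE}; the present statement is a direct synthesis of these two results.
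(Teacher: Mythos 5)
Your proposal is correct and follows exactly the same route as the paper: the authors also take $\varphi(t)=\log(1+t)$, apply Corollary \ref{cor.belinskayaoptimal} to obtain $U$ that is $\varphi$-integrable orbit equivalent but not flip-conjugate to $T$, and then conclude via Theorem \ref{thm.intOEimpliesShannonOE}. Nothing is missing.
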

	
	\begin{proof}
		Let us consider the sublinear metric-compatible function $\varphi\colon \R_+\to\R_+$ given by $\varphi(t)\coloneqq \log(1+t)$. By Corollary \ref{cor.belinskayaoptimal}, there exists $U\in\Aut(X,\mu)$ such that $T$ and $U$ are $\varphi$-integrably orbit equivalent, but not flip-conjugate. On the other hand, the transformations $T$ and $U$ are Shannon orbit equivalent by Theorem \ref{thm.intOEimpliesShannonOE}.
	\end{proof}
	
	\subsection{Finiteness of entropy and Shannon orbit equivalence}
	
	Kerr and Li implicitly asked whether dynamical entropy is an invariant of Shannon orbit equivalence for ergodic measure-preserving transformations. 
	Shortly after a first version of our paper appeared, they obtained a positive answer \cite[Thm.~A]{kerr2022entropy}.
	In this section, we provide a short proof that \emph{finiteness} of dynamical entropy is an invariant of Shannon orbit equivalence. 
	We start by recalling a definition of dynamical entropy of measure-preserving transformations which is convenient for our purposes.
	
	\begin{df}
		Let $T\in\Aut(X,\mu)$. A measurable map $f\colon X\to I$, where $I$ is countable, is called $T$\defin{-dynamically generating} if there is a full measure set $X_0\subseteq X$ such that for all distinct $x,y\in X_0$, there is $n\in\Z$ such that 
		\( f(T^n(x))\neq f(T^n(y)).\)
	\end{df}
	
	\begin{df}
		The \defin{dynamical entropy} of a measure-preserving transformation $T\in \Aut(X,\mu)$ is the infimum of the Shannon entropies of its $T$-dynamically generating functions.
	\end{df}
	
	The above definition is not the standard definition, however it is equivalent by a theorem of Rokhlin \cite[Thm.~10.8]{rokhlinLecturesentropytheory1967}. Also note that by definition, the dynamical entropy of $T\in\Aut(X,\mu)$ is finite if and only if $T$ admits a dynamically generating function of finite entropy.
	
	\begin{prop}\label{prop.finiteness entropy preserved}
		Let $T\in\Aut(X,\mu)$ be an aperiodic transformation with infinite dynamical entropy and let $U\in [T]$ be a transformation whose $T$-cocycle has finite Shannon entropy. Then $U$ has infinite dynamical entropy.
	\end{prop}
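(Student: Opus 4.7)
The plan is to argue by contraposition: assume that $U$ has finite dynamical entropy and show that $T$ must then also have finite dynamical entropy, contradicting the hypothesis. So let $g\colon X\to J$ be a $U$-dynamically generating function with $H(g)<+\infty$ and consider the combined function $f\colon X\to J\times\Z$ defined by $f(x)\coloneqq(g(x),c_U(x))$. By the standard subadditivity of Shannon entropy for pairs, $H(f)\leq H(g)+H(c_U)<+\infty$, so it suffices to show that $f$ is $T$-dynamically generating.

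The key point is that the entire sequence $(g(U^k(x)))_{k\in\Z}$ can be reconstructed from the $T$-orbit data $(f(T^m(x)))_{m\in\Z}$. Iterating the cocycle identity one writes $U^k(x)=T^{c_{U^k}(x)}(x)$, where the integer $c_{U^k}(x)$ is determined by the recursion $c_{U^0}(x)=0$ and $c_{U^{k+1}}(x)=c_{U^k}(x)+c_U(T^{c_{U^k}(x)}(x))$, and symmetrically for $k<0$, using aperiodicity of $T$ to ensure uniqueness. In particular $c_{U^k}(x)$ depends only on the sequence $(c_U(T^m(x)))_{m\in\Z}$, and accordingly $g(U^k(x))=g(T^{c_{U^k}(x)}(x))$ depends only on $(f(T^m(x)))_{m\in\Z}$.

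Now let $X_0$ be a conull set on which $g$ is $U$-dynamically generating and on which the cocycle equation $U(x)=T^{c_U(x)}(x)$ holds pointwise, and set $Z\coloneqq\bigcap_{m\in\Z}T^{-m}(X_0)$, a $T$-invariant conull subset of $X_0$. Suppose that $x,y\in Z$ are distinct and that $f(T^m(x))=f(T^m(y))$ for every $m\in\Z$. In particular $c_U(T^m(x))=c_U(T^m(y))$ for every $m$, so the recursion above forces $c_{U^k}(x)=c_{U^k}(y)$ for every $k\in\Z$. Using the first coordinate of $f$ we then obtain $g(U^k(x))=g(T^{c_{U^k}(x)}(x))=g(T^{c_{U^k}(y)}(y))=g(U^k(y))$ for every $k\in\Z$, contradicting the fact that $g$ is $U$-dynamically generating on $X_0$. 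Hence $f$ is $T$-dynamically generating of finite Shannon entropy, so $T$ has finite dynamical entropy, contrary to our hypothesis.

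I expect the main obstacle to be the bookkeeping: cleanly writing the recursion for $c_{U^k}$ and arranging all the pointwise identities (the cocycle equation, the $U$-dynamical generation of $g$) to hold simultaneously on a single $T$-invariant conull set. The entropy estimate itself is a one-line consequence of subadditivity of Shannon entropy for pairs, and the reconstruction argument is then essentially formal.
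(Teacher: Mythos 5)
Your proposal is correct and follows essentially the same route as the paper's proof: both arguments show that the pair $(g,c_U)$ is $T$-dynamically generating (via the cocycle identity forcing $c_{U^k}(x)=c_{U^k}(y)$ for all $k$) and then conclude by subadditivity of Shannon entropy. Your version merely makes the recursion for $c_{U^k}$ and the choice of a $T$-invariant conull set explicit, which the paper leaves implicit.
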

	\begin{proof}
		Let $f\colon X\to I$ be a $U$-dynamically generating function and denote by $c_U$ the $T$-cocycle of $U$. We claim that the couple $(f,c_U)\colon X\to I\times\Z$ is $T$-dynamically generating.
		Indeed, let $x,y\in X$ such that 
		\[c_U(T^n(x))=c_U(T^n(y))\text{ and }f(T^n(x))=f(T^n(y))\text{ for all }n\in\Z.\]
		The first equality and the cocycle identity imply that $c_{U^n}(x)=c_{U^n}(y)$ for all $n\in\Z$. So for all $n\in\Z$
		\[f(U^{n}(x))=f(T^{c_{U^n}(x)}(x))= f(T^{c_{U^n}(y)}(y))=f(U^{n}(y)).\]
		Since $f$ is $U$-dynamically generating, the above equation implies that $x=y$. 
		
		Assume by contradiction that $U$ has finite entropy.
		This implies that there exists a $U$-generating function $f$ with finite Shannon entropy. Since $c_U$ has finite Shannon entropy,  then so does the function $(f,c_U)$. But we have seen that $(f,c_U)$ is a $T$-generating function, hence we deduce that $T$ has finite dynamical entropy and the proof is complete.
	\end{proof}
	
	\begin{cor}
		Suppose $T_1$, $T_2$ are two aperiodic measure-preserving transformations which are Shannon orbit equivalent. Then $T_1$ has finite dynamical entropy if and only if $T_2$ has finite dynamical entropy.
	\end{cor}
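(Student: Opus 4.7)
The corollary is a direct consequence of the preceding Proposition \ref{prop.finiteness entropy preserved}, obtained by contraposition and symmetry. The plan is as follows.

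First, by unpacking the definition of Shannon orbit equivalence, we may fix $S\in\Aut(X,\mu)$ such that $ST_1S^{-1}$ and $T_2$ have the same orbits, with both associated cocycles of finite Shannon entropy. Since dynamical entropy is clearly a conjugacy invariant (indeed $T_1$ and $ST_1S^{-1}$ have the same $T_1$-dynamically generating families, up to conjugation by $S$), we may replace $T_1$ by $ST_1S^{-1}$ and thereby assume without loss of generality that $T_1$ and $T_2$ already have the same orbits. In particular, $T_1\in[T_2]$ and $T_2\in[T_1]$, and the $T_2$-cocycle of $T_1$ as well as the $T_1$-cocycle of $T_2$ both have finite Shannon entropy.

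Assume now that $T_1$ has finite dynamical entropy. Apply Proposition \ref{prop.finiteness entropy preserved} with the roles of $T$ and $U$ played by $T_2$ and $T_1$ respectively: the transformation $T_1\in[T_2]$ has a $T_2$-cocycle of finite Shannon entropy, so if $T_2$ had infinite dynamical entropy then so would $T_1$, contradicting our assumption. Hence $T_2$ has finite dynamical entropy. The converse implication is obtained by exchanging the roles of $T_1$ and $T_2$, which is legitimate since the hypotheses (having the same orbits and finite Shannon cocycles in both directions) are perfectly symmetric.

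There is no real obstacle here; the only thing to check carefully is that Shannon orbit equivalence is genuinely symmetric and that conjugation preserves dynamical entropy, both of which are essentially built into the definitions. The work has already been done in Proposition \ref{prop.finiteness entropy preserved}, whose key insight is that the pair $(f,c_U)$ is $T$-dynamically generating whenever $f$ is $U$-dynamically generating; combined with the subadditivity $H((f,c_U))\leq H(f)+H(c_U)$, this directly yields the desired contrapositive conclusion.
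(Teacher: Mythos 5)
Your proof is correct and is exactly the argument the paper intends: the corollary is stated without proof as a direct consequence of Proposition \ref{prop.finiteness entropy preserved}, and your reduction to the same-orbits case via conjugacy invariance followed by the contrapositive application of the proposition in both directions is the standard way to make that deduction explicit.
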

	
	\begin{rmq}\label{rmk.finiteness of entropy is oe-invariant}
	    As explained before, Kerr and Li recently obtained that dynamical entropy itself is preserved under Shannon orbit equivalence \cite[Thm.~A]{kerr2022entropy}.
		Suppose now that $\varphi:\R_+\to\R_+$ is a function that satisfies \mbox{$\log(1+t)= O(\ph(t))$} as $t\to +\infty$, such as $\ph(t)=\log(1+t)$ or $\ph(t)=t^p$. By Kerr and Li's result and Theorem \ref{thm.intOEimpliesShannonOE}, dynamical entropy is an invariant of $\varphi$-integrable orbit equivalence. In particular it is an invariant of $\LL^p$ orbit equivalence for $p\in(0,+\infty)$.
		When $\varphi$ is moreover sublinear, this is the only invariant of $\varphi$-integrable orbit equivalence that we know for ergodic transformations, even for $\LL^p$ orbit equivalence where $p\in (0,1)$.
	\end{rmq}
	
	\section{Weakly mixing elements are generic in \texorpdfstring{$[T]_\ph$}{[T]phi}}\label{sec.weaklymixing}

	This last section is dedicated to the proof of Theorem \ref{thmIntro:belinskaya opti asym}: we are going to show that for every sublinear metric-compatible function $\ph$ and ergodic transformation $T$, there is an element $U\in [T]_\ph$ which has the same orbit as $T$ but is not flip-conjugate to $T$.

	Note that we have shown in Corollary \ref{cor.belinskayaoptimal} that this is already the case if $T$ is an ergodic transformation such that $T^n$ is ergodic for some $n\geq 2$. Therefore we will restrict ourselves to the case when there exists $n\geq 2$ such that $T^n$ is not ergodic. For such transformations, we will not construct any explicit $U\in [T]_\ph$, but we will use the Baire category theorem. We will show that given an aperiodic transformation $T$, the possible candidates of such $U$ are generic, see Theorem \ref{thm.gdelta dense weakly mixing}.

	We start with three preparatory sections to introduce the required material. We believe them to be of independent interest.
	In the first one, we show that the metric $\d_{\ph,T}$ is a complete separable metric inducing a Polish group topology on $[T]_\ph$, see Theorem \ref{thm.polishgrouptopology}. 
	In the second one, we prove a sublinear ergodic theorem in the context
	of $\varphi$-integrability which will play a crucial role later on.
	In the third one, we study continuity properties of the first return map.

	\subsection{Polish group topology}\label{sec.polish}
	Recall that the full group $[T]\leq\Aut(X,\mu)$ is closed and separable for the topology induced by the uniform metric $d_u$ and therefore it is a Polish group \cite[Prop.~3.2]{kechrisGlobalaspectsergodic2010}. We shall see that $\ph$-integrable full groups provide further interesting classes of Polish groups.

	Let $\varphi\colon \R_+\to\R_+$ be a metric-compatible function and let $T\in\Aut(X,\mu)$ be an aperiodic transformation. We introduced the $\ph$-integrable full group $[T]_\varphi$ as the group of measure-preserving transformation whose cocycle is $\ph$-integrable. In Lemma \ref{lem.distanceonphifullgroup}, we defined a metric $\d_{\varphi,T}$ on $[T]_\ph$. The goal of this section is to prove that the topology induced by $\d_{\ph,T}$ on $[T]_\ph$ is a Polish group topology.
	
	\begin{thm}\label{thm.polishgrouptopology}
		The metric $\d_{\ph,T}$ is complete, separable and right-invariant on $[T]_\ph$ and the topology generated by $\d_{\ph,T}$ is a group topology. In particular $[T]_\ph$ is a Polish group.
	\end{thm}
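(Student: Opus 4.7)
The plan is to verify the four assertions of the theorem in turn: right-invariance (already established in Lemma~\ref{lem.distanceonphifullgroup}), completeness, separability, and the fact that the topology generated by $\d_{\ph,T}$ is a group topology.

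For \textbf{completeness}, I would first observe the pointwise bound $\ph(|c_U-c_V|)\geq \ph(1)\mathbf{1}_{\{c_U\neq c_V\}}$, immediate from separation and monotonicity of $\ph$, which integrates to $\ph(1)\,d_u\leq \d_{\ph,T}$ on $[T]_\ph$. Any $\d_{\ph,T}$-Cauchy sequence $(U_n)$ is therefore $d_u$-Cauchy and, by completeness of $([T],d_u)$, has a $d_u$-limit $U\in[T]$. Extracting a subsequence $(U_{n_k})$ along which the cocycles $c_{U_{n_k}}$ converge almost everywhere to $c_U$, an application of Fatou's lemma to $\ph(|c_{U_{n_k}}-c_{U_n}|)$ then yields both $U\in[T]_\ph$ and $\d_{\ph,T}(U_n,U)\to 0$.

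For \textbf{separability}, my approach is to view the map $U\mapsto c_U$ as an isometric embedding of $([T]_\ph,\d_{\ph,T})$ into the metric space of integer-valued measurable functions $f\colon X\to\Z$ with $\int\ph(|f|)d\mu<+\infty$, equipped with the distance $(f,g)\mapsto\int\ph(|f-g|)d\mu$. This ambient space is separable via a standard truncation-and-approximation argument: truncate $f$ at a large level $N$ (the error being controlled by $\int_{\{|f|>N\}}\ph(|f|)d\mu$, which vanishes), then approximate the resulting bounded integer-valued function via its finitely many level sets using a countable dense subalgebra of the measure algebra. Separability of $[T]_\ph$ then follows because subsets of separable metric spaces are separable.

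For the \textbf{group topology}, right multiplications are $\d_{\ph,T}$-isometries by right-invariance, so it suffices to establish continuity of inversion and of left multiplication by an arbitrary fixed $U_0\in[T]_\ph$. Given $V_n\to V_0$ in $\d_{\ph,T}$, the cocycle identity provides
\[c_{U_0V_n}-c_{U_0V_0}=c_{U_0}\circ V_n-c_{U_0}\circ V_0+c_{V_n}-c_{V_0},\]
and subadditivity of $\ph$ reduces the matter to showing $\int\ph(|c_{U_0}\circ V_n-c_{U_0}\circ V_0|)d\mu\to 0$. Since $\d_{\ph,T}$-convergence forces $d_u$-convergence, the integrand vanishes in measure; and the measure-preserving character of each $V_n$ ensures that the family $\{\ph\circ|c_{U_0}|\circ V_n\}_n$ shares the distribution of the integrable function $\ph\circ|c_{U_0}|$, providing the uniform integrability needed for a Vitali-style conclusion. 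I expect this uniform integrability step to be the main technical point of the whole argument. Continuity of inversion will then follow from right-invariance through $\d_{\ph,T}(U_n^{-1},U_0^{-1})=\d_{\ph,T}(U_0^{-1}U_n,e)$ combined with left-continuity at the identity, and joint continuity of multiplication from right-invariance together with continuity of all left translations.
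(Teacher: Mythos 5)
Your proof is correct and follows essentially the same route as the paper's: domination of the uniform metric by $\d_{\ph,T}$ plus a Fatou argument for completeness, the isometric embedding $U\mapsto c_U$ into the space of $\ph$-integrable $\Z$-valued functions for separability, and right-invariance combined with the cocycle identity, convergence in measure, and equidistribution of the composed cocycles for the group topology. The only differences are organizational (you establish continuity of left translations before inversion, while the paper does the reverse) and that you make explicit the uniform-integrability/Vitali step that the paper's proof of continuity of inversion leaves implicit.
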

	\begin{proof}
		We have already shown in Lemma \ref{lem.distanceonphifullgroup} that $\d_{\ph,T}$ is right-invariant. Corollary \ref{cor: inclusions} tells us that the inclusion $[T]_\ph\into [T]$ is Lipschitz and in particular any $\d_{\ph,T}$-Cauchy sequence is $d_u$-Cauchy.
		Since $d_u$ is complete, any $\d_{\ph,T}$-Cauchy sequence has a $d_u$-limit. 
		\begin{claim*}\label{claim.1}
			Let $(U_n)_{n\geq 0}$ be a $\d_{\ph,T}$-Cauchy sequence of elements of $[T]_\ph$ and let $U\in [T]$ be its $d_u$-limit. Then $U\in [T]_\ph$ and $\lim_n\d_{\ph,T}(U_n,U)=0$. In particular $\d_{\ph,T}$ is complete.
		\end{claim*}
		\begin{cproof}
			Since $(U_n)_{n\geq 0}$ is $\d_{\ph,T}$-Cauchy, there is $m$ such that for all \mbox{$n\geq m$,} \[\int_X \lvert c_{U_n}(x)\rvert_\ph d\mu=\d_{\ph,T}(U_n,\id)\leq \d_{\ph,T}(U_n,U_m)+\d_{\ph,T}(U_m,\id)\leq 1+\d_{\ph,T}(U_m,\id).\]
			Moreover since $\lim_n d_u(U_n,U)=0$, we have that $(c_{U_n})_{n\geq 0}$ converges in measure to $c_U$ and thus a subsequence of $(c_{U_n})_{n\geq 0}$ converges pointwise to $c_U$. Fatou's lemma then implies that $U\in [T]_\ph$.
			The triangle inequality for $\lvert\cdot\rvert_\varphi$ gives
			\begin{align*}
				\int_X \big\lvert \lvert c_{U_n}(x)-c_U(x)\rvert_\ph-\lvert c_{U_m}(x)-c_U(x)\rvert_\ph\big\rvert d\mu&\leq
				\int_X \lvert c_{U_n}(x)-c_{U_m}(x)\rvert_\ph d\mu \\ &=\d_{\ph,T}(U_n,U_m),
			\end{align*}
			hence the sequence $(\lvert c_{U_n}-c_U\rvert_\ph)_{n\geq 0}$ is Cauchy with respect to the $\LL^1$-metric. Since $(c_{U_n}-c_U)_{n\geq 0}$ converges in measure to $0$, we must have that \[\lim_n\d_{\ph,T}(U_n,U)=\lim_n\int_X\lvert c_{U_n}(x)-c_U(x)\rvert_\varphi d\mu=0\]
			so the claim is proved.
		\end{cproof}
		Let us now show that 
		the topology induced by $\d_{\ph,T}$ is a group topology.
		We start by proving the continuity of the inverse map.
		Let $(U_n)_{n\geq 0}$ be a sequence of elements of $[T]_\ph$ converging to \mbox{$U\in [T]_\ph$}. Then the cocycle identity gives us that $0=c_{UU^{-1}}(x)=c_U(U^{-1}(x))+c_{U^{-1}}(x)$ and hence
		\begin{align*}
			\d_{\ph,T}(U_n^{-1},U^{-1})
			&=\int_X \lvert c_{U_n^{-1}}(x)-c_{U^{-1}}(x)\rvert_\ph d\mu\\
			&=\int_X \lvert c_{U_n}(U_n^{-1}(x))-c_{U}(U^{-1}(x))\rvert_\ph d\mu\\
			&=\int_X \abs{c_{U_n}(x)-c_U(U\inv U_n(x))}_\ph d\mu \\
			&\leq \int_X\lvert c_{U_n}(x)-c_U(x)\rvert_\ph d\mu+\int_X\lvert c_U(x)-c_U(U^{-1}U_n(x))\rvert_\ph d\mu.
		\end{align*}
		Since $(U_n)_{n\geq 0}$ converges to $U$ for the metric $\d_{\ph,T}$ and thus for the uniform metric, the right hand side converges to $0$ and hence the inverse map is continuous.
		
		We now prove that the multiplication map is continuous. Let $(U_n)_{n\geq 0}$ and $(V_m)_{m\geq 0}$  be two sequences which $\d_{\ph,T}$-converge to $U$ and $V$ respectively. Then by the triangle inequality and right-invariance,
		\[
		\d_{\ph,T}(U_nV_n,UV)\leq \d_{\ph,T}(U_n,U)+\d_{\ph,T}(UV_n,UV).
		\]
		Now remark that since the inverse map is continuous, $UV_n$ converges to $UV$ if and only if $V_n^{-1}U^{-1}$ converges to $V^{-1}U^{-1}$. By right-invariance and continuity of the inverse $\lim_n\d_{\ph,T}(V_n^{-1}U^{-1},V^{-1}U^{-1})=0$, which finishes the proof that $\d_{\ph,T}$ induces a group topology on $[T]_\ph$.
		
		We are left to show that this topology is separable. Consider the following abelian group  where we identify functions up to a null set\[\LL^\ph(X,\mathbb Z)\coloneqq\left\lbrace f\colon X\rightarrow \mathbb Z\colon \int_X\lvert f(x)\rvert_\varphi d\mu<+\infty\right\rbrace,\]
		endowed with the metric $(f,g)\mapsto\int_X\lvert f(x)-g(x)\rvert_\varphi d\mu$. 
		The function which takes $U\in[T]_\varphi$ to $c_U\in \LL^\ph(X,\mathbb Z)$ is an isometry. So $[T]_\ph$ is isometric to a metric subspace of $\LL^\ph(X,\mathbb Z)$.
		We now prove that $\LL^\ph(X,\mathbb Z)$ is separable: identify $X$ with $[0,1]$ equipped with the Lesbegue measure and observe that the subgroup generated by characteristic functions of rational intervals is dense. Since subspaces of separable metric spaces are separable, we conclude that $[T]_\ph$ is separable. 
	\end{proof}
	
	We now exploit the Polish group topology to characterize the inclusion between $\ph$-integrable full groups in terms of metric comparisons. In particular $[T]_\ph\neq [T]_\psi$ as soon as $\ph$ and $\psi$ are not bi-Lipschitz. However we do not know how to construct any explicit element in $[T]_\ph\setminus [T]_\psi$. 
	
	\begin{prop}\label{prop: chara equal phifull}
		Let $\varphi$ and $\psi$ be two metric-compatible functions and let $T\in\Aut(X,\mu)$ be an aperiodic measure-preserving transformation. Then the following are equivalent:
		\begin{enumerate}[(i)]
			\item \label{item: lip phi}
			$\ph(t)=O(\psi(t))$ as $t\to +\infty$.
			\item \label{item: inclusion}$[T]_\psi\leq [T]_\ph$.
		\end{enumerate}
	\end{prop}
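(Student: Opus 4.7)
The direction \eqref{item: lip phi}$\Rightarrow$\eqref{item: inclusion} is already Lemma \ref{lemma.comparisontopo}, so I will focus on the converse. Assume $[T]_\psi \leq [T]_\ph$. The plan is to first upgrade this set-theoretic inclusion to a continuous inclusion of Polish groups using the closed graph theorem, then probe continuity at the identity with a family of explicit involutions coming from Rokhlin's lemma in order to extract the asymptotic bound $\ph(t) = O(\psi(t))$.

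For the first step, I will consider the inclusion map $\iota \colon ([T]_\psi, \d_{\psi,T}) \to ([T]_\ph, \d_{\ph,T})$. By Theorem \ref{thm.polishgrouptopology} both are Polish groups, and by Corollary \ref{cor: inclusions} each of $\d_{\psi,T}$ and $\d_{\ph,T}$ dominates the uniform metric $d_u$. Consequently, if $U_n \to U$ in $[T]_\psi$ and $U_n \to V$ in $[T]_\ph$, both convergences force $d_u$-convergence and hence $U = V$, so the graph of $\iota$ is closed in $[T]_\psi \times [T]_\ph$. The closed graph theorem for Polish groups then yields continuity of $\iota$. Continuity at the identity thus supplies a $\delta > 0$, which I shrink so that $\delta < 2\psi(1)$, such that $\d_{\psi,T}(U, \id) < \delta$ implies $\d_{\ph,T}(U, \id) < 1$.

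For the second step, for each integer $k \geq 1$ I will apply Rokhlin's lemma to the aperiodic transformation $T$ to obtain a measurable set $A_k$ such that $A_k, T(A_k), \ldots, T^{2k-1}(A_k)$ are pairwise disjoint and $\mu(A_k) = \eta/(2k)$, where $\eta \coloneqq \delta/(2\psi(k)) \in (0,1)$ (this interval is nonempty thanks to $\psi(k) \geq \psi(1) > \delta/2$). Let $U_k \in [T]$ be the involution swapping $T^i(A_k)$ with $T^{i+k}(A_k)$ for $i = 0, \ldots, k-1$ and fixing all other points. Its $T$-cocycle equals $\pm k$ on the tower (of total measure $\eta$) and $0$ outside, whence
\[\d_{\psi,T}(U_k, \id) = \eta\, \psi(k) = \delta/2 < \delta \quad\text{and}\quad \d_{\ph,T}(U_k, \id) = \eta\, \ph(k) = \frac{\delta\, \ph(k)}{2\psi(k)}.\]
Applying continuity at the identity yields $\delta\, \ph(k)/(2\psi(k)) < 1$, that is $\ph(k) < 2\psi(k)/\delta$, for every integer $k \geq 1$. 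By Lemma \ref{lem.lipschitzonN} this upgrades to $\ph(t) = O(\psi(t))$ as $t \to +\infty$.

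The main obstacle is the first step: the assumption $[T]_\psi \leq [T]_\ph$ is purely set-theoretic and must be turned into a topological statement before any quantitative information can be extracted. The closed graph theorem for Polish groups (whose proof relies on Baire category) provides exactly this bridge, and once continuity at the identity is available, the Rokhlin tower construction delivers test elements whose $\d_{\psi,T}$- and $\d_{\ph,T}$-distances to the identity are computable in closed form and immediately yield the desired comparison on integers.
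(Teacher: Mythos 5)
Your proof is correct and takes essentially the same route as the paper's: the paper likewise first upgrades the inclusion $[T]_\psi\leq[T]_\ph$ to a continuous map (via a Lusin--Souslin/Pettis lemma that is exactly the closed graph theorem for Polish groups you invoke, using that both metrics refine $d_u$), and then tests continuity at the identity with Rokhlin-tower involutions of cocycle $\pm k$, finishing with Lemma \ref{lem.lipschitzonN}. The only difference is presentational: the paper argues by contradiction along a sequence $k_n$ with $\psi(k_n)/\ph(k_n)\to 0$, whereas you run the same computation directly to get the uniform bound $\ph(k)<2\psi(k)/\delta$ for all $k$.
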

	
	The proof uses the following well-known lemma.
	\begin{lem}\label{lem:AC}
		Let $G$ be a Polish group, let $H_1\leq H_2\leq G$ be two subgroups of $G$. Suppose that $H_1$ and $H_2$ are endowed with a Polish topology which refines the topology induced by $G$. Then the topology of $H_1$ refines the topology induced by $H_2$.
	\end{lem}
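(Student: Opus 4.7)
The plan is to show that the inclusion map $i\colon H_1\to H_2$ is continuous. Since this is a group homomorphism between Polish groups, my strategy is to invoke the closed graph theorem for Polish groups, which reduces the task to verifying that the graph $\Gamma_i\coloneqq\{(h,h)\colon h\in H_1\}$ is closed in the product $H_1\times H_2$.

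The closedness of $\Gamma_i$ is the only genuine ingredient, and it is an immediate consequence of the refinement hypothesis. If $(h_n)_{n\geq 0}$ is a sequence in $H_1$ with $h_n\to x$ in $H_1$ and $h_n\to y$ in $H_2$, then since both Polish topologies refine the subspace topology inherited from $G$, both convergences also take place in $G$. By uniqueness of limits in the Hausdorff space $G$, we get $x=y$, so $(x,y)\in\Gamma_i$ and the graph is closed. This is the whole content of the proof and presents no real obstacle.

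If one prefers to avoid invoking the closed graph theorem for Polish groups in its strongest form, the same conclusion can be reached in two steps via automatic continuity. The graph $\Gamma_i$, being closed in a Polish space, is itself Polish, and the first-coordinate projection $\pi_1\colon \Gamma_i\to H_1$ is then a continuous bijection between Polish spaces, hence a Borel isomorphism by the Lusin--Suslin theorem. Composing its Borel inverse with the continuous second projection $\pi_2\colon\Gamma_i\to H_2$ yields that $i=\pi_2\circ\pi_1^{-1}$ is Borel-measurable. Pettis's automatic continuity theorem for Polish groups then upgrades Borel-measurability of the homomorphism $i$ to continuity, which is exactly the statement that the topology of $H_1$ refines the topology induced on it by $H_2$.
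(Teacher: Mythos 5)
Your proof is correct. It is worth noting, though, that your two ``routes'' and the paper's argument all ultimately rest on the same two ingredients, the Lusin--Souslin theorem and Pettis's automatic continuity theorem; what differs is where Lusin--Souslin is applied. The paper applies it to the continuous injections $H_1\hookrightarrow G$ and $H_2\hookrightarrow G$ to conclude that the Borel $\sigma$-algebras of $H_1$ and $H_2$ both coincide with the one inherited from $G$, so that the inclusion $H_1\hookrightarrow H_2$ is tautologically Borel, and Pettis finishes. You instead apply it to the first projection of the graph $\Gamma_i$, after first showing $\Gamma_i$ is closed; the closedness step is where the hypothesis that both topologies refine the one induced by $G$ enters (via uniqueness of limits in the Hausdorff space $G$, and sequences suffice since everything is metrizable). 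Your packaging as a closed-graph theorem for Polish group homomorphisms is a clean and standard way to organize this, and your second paragraph is essentially a proof of that theorem, so there is no gap. The paper's version is marginally shorter because it never needs to form the graph; yours makes more transparent exactly where the ambient group $G$ is used, namely only through its Hausdorff separation property.
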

	\begin{proof}
		By hypothesis, the inclusions $H_1\into G$ and $H_2\into G$ are continuous. In particular, the Borel structure induced by each of their topologies refines the Borel structure induced by the one of $G$. 
		The Lusin-Souslin theorem states that given any two Polish spaces $X$ and $Y$, if $f\colon X\to Y$ is Borel and injective then for every Borel $A\subseteq X$, the set $f(A)$ is Borel, see \cite[Thm.~15.1]{kechrisClassicaldescriptiveset1995}. 
		Therefore, we can apply it to the inclusions $H_1\into G$ and $H_2\into G$ to obtain that the Borel structures induced by the respective topologies of $H_1$ and $H_2$ coincide with the $\sigma$-algebra induced by the Borel subsets of $G$.
		This in particular tells us that the inclusion map $H_1\into H_2$ is Borel, so it is automatically continuous by Pettis' lemma \cite[Thm.~9.9]{kechrisClassicaldescriptiveset1995} which proves the lemma.
	\end{proof}

	\begin{proof}[Proof of Proposition \ref{prop: chara equal phifull}]
		The implication \eqref{item: lip phi}$\Rightarrow$\eqref{item: inclusion} follows from Lemma \ref{lemma.comparisontopo}, so we only need to prove \eqref{item: inclusion}$\Rightarrow$\eqref{item: lip phi}. We argue by contradiction: assume that
		$[T]_\psi\leq [T]_\ph$ but \eqref{item: lip phi} does not hold.  By Lemma \ref{lem.lipschitzonN}, there exists a sequence $(k_n)_{n\geq 0}$ of positive integers such that
		\begin{equation}\label{eq:psi(k_n)/phi(k_n)}
			\lim_{n\to +\infty} \frac{\psi(k_n)}{\ph(k_n)}=0.
		\end{equation}
		Corollary \ref{cor: inclusions} tells us that  $[T]_\ph$ and $[T]_\psi$ embed continuously in $[T]$. Therefore Lemma \ref{lem:AC} yields that the inclusion map of $[T]_\psi$ into $[T]_\ph$ is continuous.
		We will obtain our contradiction by constructing a sequence $(U_n)_{n\geq 0}$ of elements of $[T]_\psi$ such that
		\[
		\d_{\psi,T}(U_n,\id)\to 0\text{ but }\d_{\ph,T}(U_n,\id)\not\to 0.
		\]
		
		By Rokhlin's lemma, one can find for every $n\in\N$, a measurable subset $A_n\subseteq X$ such that $A_n,T(A_n),\ldots, T^{2k_n-1}(A_n)$ are pairwise disjoint and $\mu(A_n)\geq \frac 1{4k_n}$. Note that 
		\[\mu\left(\bigsqcup_{i=0}^{k_n-1}T^i(A_n)\right)\geq \frac{1}{4}.\]
		Hence, for all $n$ such that $\ph(k_n)\geq 4$, we can pick a measurable subset $B_n\subseteq \bigsqcup_{i=0}^{k_n-1}T^i(A_n)$ of measure exactly $\frac 1{\ph(k_n)}$. We then define $U_n\in [T]_\ph$ by
		$$
		U_n(x)\coloneqq\left\{
		\begin{array}{ll}
			T^{k_n}(x)   &\text{if }x\in B_n;  \\
			T^{-k_n}(x)     &\text{if }x\in T^{k_n}(B_n);\\
			x&\text{otherwise.}
		\end{array}
		\right.
		$$
		By construction $\d_{\ph,T}(U_n,\id)=2\mu(B_n)\ph(k_n)=\frac 12$ but Equation \eqref{eq:psi(k_n)/phi(k_n)} implies that $d_{\psi,T}(U_n,\id)=2\mu(B_n)\psi(k_n) \to 0$, a contradiction.
	\end{proof}

	\begin{cor}
		Let $\varphi$ and $\psi$ be two metric-compatible functions, let $T\in\Aut(X,\mu)$ be aperiodic. Then $[T]_\varphi=[T]_\psi$ if and only if $\ph(t)=O(\psi(t))$ and $\psi(t)=O(\ph(t))$ as $t\to +\infty$.
	\end{cor}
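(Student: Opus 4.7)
The plan is to derive this corollary directly from Proposition \ref{prop: chara equal phifull}, which already establishes the asymmetric version of this equivalence.

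First, I would observe that the equality $[T]_\varphi = [T]_\psi$ is equivalent to the conjunction of the two inclusions $[T]_\varphi \leq [T]_\psi$ and $[T]_\psi \leq [T]_\varphi$. Then applying Proposition \ref{prop: chara equal phifull} to each inclusion separately, the first inclusion is equivalent to $\psi(t) = O(\varphi(t))$ as $t \to +\infty$, and the second inclusion is equivalent to $\varphi(t) = O(\psi(t))$ as $t \to +\infty$. Conjoining these two equivalences yields the desired statement.

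There is essentially no obstacle: the work has all been done in Proposition \ref{prop: chara equal phifull}, which itself relies on Lemma \ref{lemma.comparisontopo} for the easy direction and on the Polish group structure from Theorem \ref{thm.polishgrouptopology} (together with Pettis' lemma via Lemma \ref{lem:AC} and an explicit Rokhlin-tower construction) for the hard direction. The corollary is purely a symmetrization, so the proof reduces to a one-line invocation of the proposition applied twice.
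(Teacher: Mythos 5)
Your proposal is correct and matches the paper's intent exactly: the corollary is stated without proof precisely because it is the immediate symmetrization of Proposition \ref{prop: chara equal phifull}, obtained by applying that proposition to each of the two inclusions whose conjunction is the equality $[T]_\varphi=[T]_\psi$. Your bookkeeping of which inclusion corresponds to which $O$-condition is also right.
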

	
	\subsection{A sublinear ergodic theorem for \texorpdfstring{$\varphi$}{phi}-integrable functions}\label{sec.subadditivethm}

	In this section, we prove the following sublinear ergodic theorem, which will be a key tool in our analysis of the first return map. 
	Given a measurable function $\varphi:\R_+\to\R_+$, a measurable function $f:X\to\C$ is \defin{$\varphi$-integrable} when 
	\(\int_X\varphi(\abs{f(x)})d\mu<+\infty\). 
	
	\begin{thm}\label{thm.subadditivephifullgroup}
		Let $\ph:\R_+\to\R_+$ be a sublinear metric-compatible function. 
		Let $U\in\Aut(X,\mu)$ and let $f\colon X\rightarrow \mathbb C$ be a measurable function which is $\ph$-integrable. 
		Then for almost every $x\in X$
		\[
		\lim_n \frac 1 n \varphi\left(\abs{\sum_{k=0}^{n-1} f(U^k(x))}\right)=0.
		\]
		The convergence also holds in $\LL^1$, that is
		\[
		\lim_n\int_X \frac 1 n\varphi\left(\left|\sum_{k=0}^{n-1} f(U^k(x))\right|\right) d\mu=0.
		\]
	\end{thm}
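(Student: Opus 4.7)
The strategy is a truncation argument combined with Birkhoff's ergodic theorem applied to $\varphi(|f|)\in \LL^1(X,\mu)$. First I would reduce to the case where $\varphi$ is additionally subadditive (i.e.\ metric-compatible) by invoking Lemma~\ref{lem.free metric-compatible}: it furnishes a sublinear metric-compatible $\psi$ with $\varphi(t)\leq \psi(t)+C$ for all $t\geq 0$ (combining the asymptotic domination with the boundedness of $\varphi$ on bounded sets). Since $\varphi(|S_n f|)/n \leq \psi(|S_n f|)/n + C/n$, it is enough to prove the statement for the subadditive majorant.

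Assume then that $\varphi$ is subadditive, and set $F_n(x):=\varphi(|S_n f(x)|)$ with $S_n f:=\sum_{k=0}^{n-1} f\circ U^k$. Truncate $f=f_M+g_M$ where $f_M:=f\mathds{1}_{|f|\leq M}$. The triangle inequality $|S_n f|\leq |S_n f_M|+|S_n g_M|$ together with the monotonicity and subadditivity of $\varphi$ yield
\[ F_n(x)\;\leq\;\varphi(|S_n f_M(x)|)+\varphi(|S_n g_M(x)|)\;\leq\;\varphi(nM)+\sum_{k=0}^{n-1}\varphi(|g_M|)(U^k x).\]
Dividing by $n$: the first term $\varphi(nM)/n$ tends to $0$ as $n\to \infty$ by sublinearity of $\varphi$, while the second, by Birkhoff's theorem applied to $\varphi(|g_M|)\in \LL^1$, converges both almost everywhere and in $\LL^1$ to an integrable function $L_M$ with $\int L_M\, d\mu=\int \varphi(|g_M|)\, d\mu$. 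Hence $\limsup_n F_n/n\leq L_M$ almost everywhere, and $\limsup_n \E[F_n]/n\leq \E[\varphi(|g_M|)]$. Since $\varphi(|g_M|)\to 0$ pointwise as $M\to\infty$ and is dominated by $\varphi(|f|)\in \LL^1$, dominated convergence yields $\E[\varphi(|g_M|)]\to 0$; this gives the $\LL^1$ convergence of $F_n/n$ to $0$, and extracting a subsequence $M_k\to\infty$ along which $L_{M_k}\to 0$ almost everywhere concludes the a.e.\ convergence.

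The main obstacle I foresee is the reduction step when $\varphi$ is not subadditive: the majorant $\psi$ given by Lemma~\ref{lem.free metric-compatible} dominates $\varphi$, so the hypothesis $\varphi(|f|)\in \LL^1$ does not automatically transfer to $\psi(|f|)\in \LL^1$. To resolve this one either has to refine the construction of $\psi$ so that $\psi(|f|)$ remains integrable (exploiting the distribution of $|f|$), or else to rerun the truncation argument directly with $\varphi$, using the sublinear affine estimate $\varphi(t)\leq \epsilon t+C(\epsilon)$ to control the tail contribution $|S_n g_M|$ without appealing to subadditivity.
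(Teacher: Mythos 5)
Your truncation argument is essentially the paper's: the authors likewise split $f$ into a bounded piece and a piece whose $\varphi$-integral is at most $\eps$, kill the bounded piece with $\varphi(nK)/n\to 0$ by sublinearity, and push $\varphi$ inside the sum over the tail piece by subadditivity and monotonicity, obtaining the $\LL^1$ statement. The one genuine difference is the almost everywhere statement: the paper observes that $g_n(x)=\varphi\bigl(\bigl|\sum_{k=0}^{n-1}f(U^k(x))\bigr|\bigr)$ is a Kingman-subadditive sequence, gets a.e.\ convergence of $g_n/n$ to some $h$ from Kingman's theorem, and then identifies $h=0$ using the $\LL^1$ convergence; you instead dominate $\limsup_n F_n/n$ by the Birkhoff limit $L_M$ of the averages of $\varphi(|g_M|)$ and send $M\to\infty$ along a subsequence. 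Your route avoids Kingman entirely and is correct; it is arguably more elementary. One small wrinkle in both arguments: a general non-decreasing sublinear $\varphi$ may have $\varphi(0)>0$, in which case $\varphi(|g_M|)\to\varphi(0)$ rather than $0$ and $\int L_M\,d\mu\not\to 0$; this is harmless (subtract $\varphi(0)$ off $\varphi$ away from $0$, since $\varphi(0)/n\to 0$), but it should be said.

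The obstacle you flag in the reduction step is real, and it is worth stressing that the paper's proof performs exactly the reduction you describe — replacing $\varphi$ by $\tilde\varphi=\psi+\varphi(C)$ with $\psi$ given by Lemma \ref{lem.free metric-compatible} — without addressing the fact that $\varphi(|f|)\in\LL^1$ does not imply $\tilde\varphi(|f|)\in\LL^1$. The loss is genuine: already for $\varphi(t)=\log(1+t)$ the construction in that lemma yields $\psi(t)\asymp(\log t)^2$, and there are $f$ with $\log(1+|f|)$ integrable but $(\log(1+|f|))^2$ not. So you have located a gap that the paper's own proof shares. Of your two suggested repairs, the second does not work as stated: the affine bound $\varphi(t)\leq\eps t+C(\eps)$ reduces the tail term to Birkhoff averages of $|g_M|$ itself, and $f$ is not assumed to be in $\LL^1$ (only $\varphi(|f|)$ is), so those averages need not converge. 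The first repair — building a subadditive sublinear majorant adapted to the distribution of $|f|$, in the spirit of de la Vallée Poussin — is the right idea but requires an actual construction that neither you nor the paper supplies. Note finally that wherever the theorem is actually used (Corollary \ref{cor.dphip tn}), $\varphi$ is metric-compatible, hence already subadditive, so the problematic reduction is never needed in the applications.
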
	
	
\begin{proof}
		Given $n\geq 1$ and a $\varphi$-integrable function $f$, let for all $x\in X$
			\[ g_n(x)\coloneqq \varphi\left( \left\lvert\sum_{k=0}^{n-1}f(U^k(x))\right\rvert\right).\]

		Using the fact that $\varphi$ is metric-compatible, we deduce that the sequence of functions $(g_n)_{n\geq 1}$ satisfies Kingman's subadditivity property: for all $n,m\geq 1$ and all $x\in X$, 
		\[
		g_{n+m}(x)\leq g_n(x)+g_m(U^n(x)).
		\] Kingman's subadditive theorem \cite{kingmannErgodicTheorySubadditive} implies that $(\frac{g_n}n)_{n\geq 0}$ converges almost everywhere to some function $h$ and our aim becomes to show that $h=0$. Recall that a sequence that converges in $\LL^1$ admits an almost surely converging subsequence. In order to prove that $h=0$, it is therefore enough to prove that $\| \frac{g_n}n\|_1$ converges to $0$, namely to establish the second part of the theorem.

		To this end, let $f$ be a $\varphi$-integrable function and let $\varepsilon>0$. Since $\varphi(\abs f)$ is integrable and $\varphi$ is non-decreasing, we find a measurable subset
		$A\subseteq X$ and $K\geq 0$ such that $\int_{X\setminus A} \varphi(|f(x)|) d\mu\leq \eps$ and $\lvert f(x)\rvert\leq K$ for every $x\in A$. 
		For every measurable subset $B\subseteq X$, we denote $f_B\coloneqq f\mathds{1}_B$, where $\mathds1_B$ is the indicator function of $B$. With this notation at hand, using  first that $\varphi$ is subadditive non-decreasing and then that $U$ preserves the measure we obtain:
		\begin{align*}
			\limsup_{n}\int_X\frac 1 n\varphi\left(\left\lvert \sum_{k=0}^{n-1}f_{X\setminus A}(U^k(x))\right\rvert\right) d\mu &
			\leq \limsup_{n} \int_X\frac 1 n \sum_{k=0}^{n-1}\varphi\left(|f_{X\setminus A}(U^k(x))|\right) d\mu\\
			&= \int_{X}\varphi\left(\lvert f_{X\setminus A}(x)\rvert\right) d\mu\\ &\leq \eps.
		\end{align*}
		Besides, since $f_A$ is bounded by $K$, we have for all $x\in X$
		\[\frac{1}{n}\varphi\left(\abs{\sum_{k=0}^{n-1}f_A(U^k(x))}\right)\leq \frac{\varphi(nK)}{n}= K\frac{\varphi(nK)}{nK}.\] 
		Integrating over $X$, we obtain \[\int_X \frac 1 n\varphi\left(\left|\sum_{k=0}^{n-1} f_A(U^k(x))\right|\right) d\mu\leq K\frac{\varphi(nK)}{nK}.\]
		Using that $f=f_{X\setminus A}+f_{A}$ and subadditivity, we deduce
		\[
		\limsup_n\int_X \frac 1 n\ph\left(\left|\sum_{k=0}^{n-1} f(U^k(x))\right|\right) d\mu
		\leq\varepsilon+\limsup_n K\frac{\varphi(nK)}{nK}
		\]
		Since $\varphi$ is sublinear, we finally obtain
		\[\limsup_{n}\int_X \frac 1 n\varphi\left(\left|\sum_{k=0}^{n-1} f(U^k(x))\right|\right) d\mu\leq \varepsilon.\]
		This proves that  $\|\frac{g_n}n\|_1$ converges to $0$, thus ending the proof of the theorem.	
	\end{proof}

	Here is our main application, which will be a key tool in the following section. 
	
	\begin{cor}\label{cor.dphip tn}
		Let $\ph$ be a sublinear metric-compatible function and let $T\in\Aut(X,\mu)$ be an aperiodic transformation. Then for every $U\in [T]_\ph$,
		\[\lim_n\frac{\d_{\varphi,T}(U^n,\id)}{n}=0.\]
	\end{cor}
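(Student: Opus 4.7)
The plan is to recognize that this corollary is an immediate consequence of Theorem \ref{thm.subadditivephifullgroup} applied with $f = c_U$, so the work reduces to two short observations: translating $\d_{\ph,T}(U^n,\id)$ into a Birkhoff-type integral, and checking the hypothesis of the sublinear ergodic theorem.

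First, I would unfold the definition of $\d_{\ph,T}$ and use the cocycle identity \eqref{eq:cocycle identity}. Iterating the cocycle identity gives, for every $n\geq 1$ and every $x\in X$,
\[
c_{U^n}(x) = \sum_{k=0}^{n-1} c_U(U^k(x)),
\]
so that
\[
\frac{\d_{\ph,T}(U^n,\id)}{n} = \frac{1}{n}\int_X \ph\!\left(\left|\sum_{k=0}^{n-1} c_U(U^k(x))\right|\right) d\mu.
\]

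Next, observe that since $U \in [T]_\ph$, by definition the cocycle $c_U \colon X \to \Z$ satisfies $\int_X \ph(|c_U(x)|)\, d\mu < +\infty$, i.e.\ $c_U$ is $\ph$-integrable in the sense of Theorem \ref{thm.subadditivephifullgroup}. Since $\ph$ is a sublinear metric-compatible function, it is in particular sublinear and non-decreasing, so the hypotheses of that theorem are met with $f = c_U$.

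Applying the $\LL^1$-convergence part of Theorem \ref{thm.subadditivephifullgroup} directly yields
\[
\lim_n \frac{1}{n}\int_X \ph\!\left(\left|\sum_{k=0}^{n-1} c_U(U^k(x))\right|\right) d\mu = 0,
\]
which by the identification above is exactly the claimed limit. There is no real obstacle here: the corollary is essentially a reformulation of the sublinear ergodic theorem in the language of $\ph$-integrable full groups, and all the technical difficulty has already been absorbed in the proof of Theorem \ref{thm.subadditivephifullgroup}.
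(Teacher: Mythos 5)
Your proof is correct and follows essentially the same route as the paper: both reduce the corollary to the $\LL^1$-convergence part of Theorem \ref{thm.subadditivephifullgroup} via the cocycle identity. The only (cosmetic) difference is that you apply the theorem directly to $f=c_U$, obtaining an exact identity, whereas the paper applies it to $f=\lvert c_U\rvert$ and uses the triangle inequality together with monotonicity of $\ph$ to get an upper bound; both are valid.
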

	\begin{proof}
		For all integers $n\geq 0$ and all $x\in X$, by the cocycle identity and the triangle inequality we have 
		\[\lvert c_{U^n}(x)\rvert\leq \sum_{k=0}^{n-1}\lvert c_U(U^k(x))\rvert.\]
		We apply Theorem \ref{thm.subadditivephifullgroup} to the function $f(x)\coloneqq \lvert c_U(x)\rvert$ and get that
		\[\frac{\d_{\varphi,T}(U^n,\id)}{n}\leq \int_X\frac{1}{n}\left\lvert\sum_{k=0}^{n-1}f(U^k(x))\right\rvert_\ph \underset{n\to +\infty}{\longrightarrow}0.\qedhere\]
	\end{proof}
	
	\begin{rmq} 
		\begin{sloppypar}
			We do not fully understand the asymptotics of the sequence $(\d_{\varphi,T}(U^n,\id))_{n\geq 0}$. For instance, when does the sequence $(\d_{\varphi,T}(U^n,\id)/\ph(n))_{n\geq 0}$ converge?
		\end{sloppypar}
	\end{rmq}
	
	\subsection{Continuity properties of the first return map}
	
	In the coming section we are primarily interested in continuity properties of the first return map.
	An important preliminary step is the following analogue of Kac's Lemma, saying that $\varphi$-integrable full groups are stable under first return maps.

	\begin{lem}\label{lem.kacphifullgroup}
		Let $T\in\Aut(X,\mu)$ be an aperiodic transformation and let \mbox{$\ph\colon  \R_+\to\R_+$} be a metric-compatible function. For all $U\in [T]_\ph$ and all measurable subsets $A\subseteq X$, we have 
		\(\d_{\varphi,T}(U_A,\id)\leq\d_{\varphi,T}(U,\id)\). In particular, $U_A\in [T]_\ph$.
	\end{lem}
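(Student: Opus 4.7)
The plan is to combine a direct cocycle computation with subadditivity of $\ph$ and the Kakutani tower decomposition over $A$.

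First, $c_{U_A}(x)=0$ for $x \notin A$, since $U_A(x) = x$ there. For $\mu$-almost every $x \in A$, Poincaré recurrence ensures $n_{U,A}(x) < +\infty$ and $U_A(x) = U^{n_{U,A}(x)}(x)$; iterating the cocycle identity \eqref{eq:cocycle identity} then gives
\[c_{U_A}(x) = \sum_{k=0}^{n_{U,A}(x)-1} c_U(U^k(x)).\]
Applying monotonicity and then subadditivity of $\ph$ yields the pointwise bound
\[\lvert c_{U_A}(x)\rvert_\ph \leq \sum_{k=0}^{n_{U,A}(x)-1}\lvert c_U(U^k(x))\rvert_\ph.\]

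Next, I would integrate this inequality over $A$ and reorganize via the tower over $A$. For $k \geq 0$, set $B_k \coloneqq U^k\bigl(\{x \in A \colon n_{U,A}(x) > k\}\bigr)$. A short argument using the definition of $n_{U,A}$ shows that the sets $(B_k)_{k \geq 0}$ are pairwise disjoint: if $y \in B_k \cap B_{k'}$ with $k < k'$, writing $y = U^k(x) = U^{k'}(x')$ with $x,x' \in A$ leads to $U^{k'-k}(x') = x \in A$ with $1 \leq k'-k < n_{U,A}(x')$, contradicting the minimality defining $n_{U,A}(x')$. Using that $U$ preserves $\mu$, the change of variables $y = U^k(x)$ gives
\begin{align*}
\int_A \lvert c_{U_A}(x)\rvert_\ph d\mu
&\leq \sum_{k \geq 0}\int_{\{x\in A \colon n_{U,A}(x) > k\}}\lvert c_U(U^k(x))\rvert_\ph d\mu \\
&= \sum_{k \geq 0}\int_{B_k}\lvert c_U(y)\rvert_\ph d\mu \leq \int_X \lvert c_U(y)\rvert_\ph d\mu = \d_{\varphi,T}(U,\id),
\end{align*}
where the last inequality uses the disjointness of the $B_k$'s. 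Since $c_{U_A}$ vanishes off $A$, the left-hand side equals $\d_{\varphi,T}(U_A,\id)$, proving the claimed inequality; the ``in particular'' is then immediate from $U\in [T]_\ph$.

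No real obstacle arises: subadditivity of $\ph$ handles the nonlinearity, and the disjointness of the tower blocks follows from an elementary argument with return times. Note that the proof does not require $U$ to be aperiodic, only measure-preserving; aperiodicity of $T$ enters solely to make $T$-cocycles well defined.
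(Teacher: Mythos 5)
Your proof is correct and follows essentially the same route as the paper's: iterate the cocycle identity, apply the triangle inequality for $\lvert\cdot\rvert_\ph$, and conclude by measure-preservation together with disjointness of the Kakutani tower levels over $A$. The only difference is cosmetic — you group the double sum by level $k$ (your sets $B_k$ are the unions $\bigsqcup_{j>k}U^k(A_j)$ of the paper's pieces), whereas the paper groups by return time $j$.
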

	\begin{proof} 
		For every integer $j\geq 1$, set $A_j\coloneqq\{x\in A\colon n_{U,A}(x)=j\}$ where $n_{U,A}$ is the first return time of $U$ to $A$ as defined in Section \ref{sec.preliminaries}. Then, 
		\[
		\int_X \lvert c_{U_A}(x)\rvert_\ph d\mu=\int_A\abs{c_{U_A}(x)}_\ph d\mu=\sum_{j=1}^{+\infty}\int_{A_j} \lvert c_{U_A}(x)\rvert_\ph d\mu=\sum_{j=1}^{+\infty}\int_{A_j} \lvert c_{U^j}(x)\rvert_\ph d\mu
		\]
		By the cocycle identity, for every $j\geq 1$ we have $c_{U^j}(x)=\sum_{i=0}^{j-1}c_U(U^i(x))$, so by the triangle inequality we obtain
		\begin{align*}
			\int_X \lvert c_{U_A}(x)\rvert_\ph d\mu&\leq\sum_{j=1}^{+\infty}\sum_{i=0}^{j-1}\int_{A_j}\lvert c_U(U^i(x))\rvert_\ph d\mu\\
			&\leq\sum_{j=1}^{+\infty}\sum_{i=0}^{j-1}\int_{U^{i}(A_j)}\lvert c_U(x)\rvert_\ph d\mu\\
			&\leq\int_X \lvert c_U(x)\rvert_\ph d\mu,
		\end{align*} 
		the last inequality being a consequence of the fact that the sets $U^{i}(A_j)$ are disjoint for $j\in \N$ and $i\in\{0,\ldots,j-1\}$.
	\end{proof}
	
	In order to state the continuity properties of the first return map, let us first observe that since we are working up to measure zero, the first return map with respect to a set $A$ only depends on $A$ up to a null set.
	It is therefore natural to introduce the measure algebra $\MAlg(X,\mu)$, defined as the algebra of measurable subsets modulo identifying subsets which differ on a null set.  We endow $\MAlg(X,\mu)$ with the metric $d_\mu(A,B)\coloneqq \mu(A\bigtriangleup B)$. 
	
	We can now recall a continuity property satisfied by the first return map in the full group, which was first observed by Keane.
	
	\begin{lem}[{\cite[Lem.~3]{keaneContractibilityAutomorphismGroup1970}}]\label{lem: keane continuity}
		Let $T$ be a measure-preserving transformation, then the map
		\[
		\begin{array}{rl}
			[T]\times \MAlg(X,\mu)&\to[T]\\
			(U,A)&\mapsto U_A
		\end{array}
		\]
		is continuous.
	\end{lem}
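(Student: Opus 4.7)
My plan is to prove continuity by bounding the measure of the disagreement set $\{x : U'_{A'}(x) \neq U_A(x)\}$ via an explicit case analysis, using Kac's lemma to control large return times and bi-invariance of $d_u$ to control iterates.

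Fix $(U,A)$ and $\varepsilon>0$. By Kac's lemma \eqref{equation.kac} the return time $n_{U,A}$ is integrable, so I may pick an integer $N \geq 1$ with $\mu(\{x\in A\colon n_{U,A}(x) > N\}) < \varepsilon$. Call this exceptional set $E$. The point is that on $(A\cap A')\setminus E$ the return time $n_{U,A}(x)$ is at most $N$, so only finitely many iterates are involved in determining $U_A(x)$, and I can compare with $U'_{A'}(x)$ by ensuring that (a) the first $N$ iterates of $U'$ and $U$ coincide at $x$, and (b) none of the points $U(x),\dots,U^N(x)$ lie in the symmetric difference $A\triangle A'$.

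Now suppose $d_u(U',U) < \delta$ and $d_\mu(A',A) < \delta$ for a $\delta$ to be fixed. I will list the sets outside of which $U'_{A'}$ is forced to agree with $U_A$:
\begin{itemize}
\item $A\triangle A'$, of measure at most $\delta$;
\item $E$, of measure at most $\varepsilon$;
\item $F \coloneqq \bigcup_{j=1}^{N}\{x : U'^j(x) \neq U^j(x)\}$, whose measure is at most $\tfrac{N(N+1)}{2}\delta$, using the standard bound $d_u(U'^j,U^j)\leq j\,d_u(U',U)$ that follows from bi-invariance of $d_u$;
\item $G \coloneqq \bigcup_{j=1}^{N} U^{-j}(A\triangle A')$, whose measure is at most $N\delta$, since $U$ is measure-preserving.
\end{itemize}
On the complement of $(A\triangle A')\cup E\cup F\cup G$, if $x\notin A$ then $x\notin A'$, hence both maps fix $x$; if $x\in A\cap A'\setminus E$, then $k \coloneqq n_{U,A}(x) \leq N$, the iterates $U'^j(x)=U^j(x)$ stay out of $A'$ for $j=1,\dots,k-1$ and land in $A'$ at $j=k$, so $n_{U',A'}(x)=k$ and $U'_{A'}(x) = U'^k(x) = U^k(x) = U_A(x)$.

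Therefore $d_u(U'_{A'},U_A) \leq \varepsilon + \bigl(1 + N + \tfrac{N(N+1)}{2}\bigr)\delta$, and choosing $\delta$ small enough (depending on $N$ and hence on $\varepsilon$ and on the original pair $(U,A)$) makes this at most $2\varepsilon$, proving joint continuity at $(U,A)$. The only slightly delicate step is the combinatorial identification of the ``good'' set where $U'_{A'}$ and $U_A$ must coincide, but once one observes that the dependence of $U_A(x)$ on $x\in A\setminus E$ involves only the first $N$ iterates of $U$ and only membership of these iterates in $A$, the rest is routine bookkeeping with the bi-invariance of $d_u$ and invariance of $\mu$ under $U$.
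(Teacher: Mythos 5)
Your proof is correct. Note that the paper does not actually prove this lemma: it is quoted directly from Keane's work (Lemma~3 of the cited reference), so there is no in-paper argument to compare against. Your argument is the standard one and, as far as I can tell, essentially reconstructs Keane's: truncate the return time at a level $N$ outside a set of measure $\varepsilon$ (you invoke Kac's inequality \eqref{equation.kac} plus Markov, though almost-everywhere finiteness of $n_{U,A}$ from Poincar\'e recurrence already suffices to choose $N$), then observe that on the good set the value $U_A(x)$ is determined by the first $N$ iterates of $U$ at $x$ together with their membership in $A$, both of which are stable under small perturbations of $U$ in $d_u$ (via $d_u(U'^j,U^j)\leq j\,d_u(U',U)$ from bi-invariance) and of $A$ in $d_\mu$ (via $\mu\bigl(U^{-j}(A\bigtriangleup A')\bigr)=\mu(A\bigtriangleup A')$). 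The case analysis identifying the disagreement set inside $(A\bigtriangleup A')\cup E\cup F\cup G$ is complete: off $A\cup A'$ both return maps are the identity, and on $(A\cap A')\setminus(E\cup F\cup G)$ you correctly verify $n_{U',A'}(x)=n_{U,A}(x)$ and hence equality of the two maps. The resulting bound $d_u(U'_{A'},U_A)\leq \varepsilon+\bigl(1+N+\tfrac{N(N+1)}{2}\bigr)\delta$ gives joint continuity at $(U,A)$ as claimed.
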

	
	It is worth noting that the analogue of Lemma \ref{lem.kacphifullgroup} fails for the $\LL^1$-full group.
	Indeed, let $T\in\Aut(X,\mu)$ be ergodic and let $\ph\coloneqq \id_{\R_+}$ Then Kac's Lemma yields that for all measurable $A\subseteq X$ of positive measure, $\d_{\ph,T}(T_A,\id)=d_{\ph,T}(T,\id)=1$. Since $T_{\emptyset}=\id$, this shows that the map $\MAlg(X,\mu)\to [T]_1$ defined by $A\mapsto T_A$ is not continuous.
	
	However, the situation is not that clear when $\varphi$ is sublinear.
	
	\begin{qu}
		Let $\ph\colon\R_+\to\R_+$ be a \emph{sublinear} metric-compatible function. Is the map $\MAlg(X,\mu)\to [T]_\ph$ defined by $A\mapsto T_A$ continuous? More generally, is the map $[T]_\ph\times \MAlg(X,\mu)\to [T]_\ph$ given by $(U,A)\mapsto U_A$ continuous? 
	\end{qu}
	
	In this section we give two partial answers to the above questions.
	We first prove that the map $A\mapsto U_A$ satisfies a continuity property ``from below''. For this, we need the following version of Scheffé's lemma for sequences of $\Z$-valued $\varphi$-integrable functions.
	
	\begin{lem}\label{lem.scheffe}
		Let $f\colon X\to\Z$ be a measurable function and let $(f_n)_{n\geq 0}$ be a sequence of measurable functions $f_n\colon X\to\Z$ that converge in measure to $f$. If
		$$\limsup \int_X\abs{f_n}_\ph d\mu\leq \int_X \abs f_\ph d\mu,$$
		then $\lim_n\int_X\abs{f_n-f}_\ph d\mu=0$. 
	\end{lem}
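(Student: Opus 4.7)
The key observation is that since $f$ and the $f_n$'s take values in $\mathbb{Z}$, convergence in measure is particularly rigid: the set $A_n \coloneqq \{x \in X \colon f_n(x) = f(x)\}$ satisfies $\mu(A_n) \to 1$. Indeed, $|f_n(x) - f(x)| \geq 1$ whenever $f_n(x) \neq f(x)$, so $\mu(X \setminus A_n) \leq \mu(\{|f_n - f| \geq 1\}) \to 0$. This turns the Scheffé-type statement into a clean integral splitting argument.

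My plan is to write
\[\int_X \abs{f_n - f}_\ph d\mu = \int_{X \setminus A_n}\abs{f_n - f}_\ph d\mu\]
since $f_n = f$ on $A_n$, and then control this by the subadditivity of $\ph$:
\[\int_{X\setminus A_n}\abs{f_n-f}_\ph d\mu \leq \int_{X\setminus A_n}\abs{f_n}_\ph d\mu + \int_{X\setminus A_n}\abs{f}_\ph d\mu.\]
The second term tends to $0$ by the dominated convergence theorem (with dominant $\abs{f}_\ph \in \LL^1$), since $\mathds{1}_{X \setminus A_n} \to 0$ almost everywhere (up to extraction, which is harmless as the hypothesis is on the whole sequence).

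For the first term, I will use that $f_n = f$ on $A_n$ and write
\[\int_{X \setminus A_n}\abs{f_n}_\ph d\mu = \int_X\abs{f_n}_\ph d\mu - \int_{A_n}\abs{f}_\ph d\mu.\]
The hypothesis yields $\limsup_n \int_X \abs{f_n}_\ph d\mu \leq \int_X\abs f_\ph d\mu$, while $\int_{A_n}\abs f_\ph d\mu \to \int_X\abs f_\ph d\mu$ by dominated convergence. Subtracting, we obtain $\limsup_n \int_{X\setminus A_n}\abs{f_n}_\ph d\mu \leq 0$, and non-negativity forces convergence to $0$. Combining the two bounds yields $\int_X\abs{f_n - f}_\ph d\mu \to 0$.

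The only mild subtlety is the case $\int_X \abs{f}_\ph d\mu = +\infty$: then the hypothesis is vacuous, but we also have no integrability of $f$ to drive the dominated convergence on the $\abs{f}_\ph$ term. Since the lemma is only used in the $\varphi$-integrable full group context where such an $f$ arises as a cocycle in $\LL^\ph(X,\Z)$, I expect $f$ to be $\varphi$-integrable in all intended applications; I will state the proof under the implicit assumption $\int_X\abs f_\ph d\mu < +\infty$ (the complementary case being trivially handled by Fatou, which gives $\int\abs f_\ph d\mu \leq \liminf \int\abs{f_n}_\ph d\mu \leq \int\abs f_\ph d\mu$, forcing finiteness whenever the right-hand side of the hypothesis is finite).
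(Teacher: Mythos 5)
Your proof is correct and follows essentially the same route as the paper's: both arguments split the integral over the set where $f_n$ (resp.\ $g$) disagrees with $f$ --- which has small measure precisely because the functions are $\Z$-valued --- then bound $\int_{\{f_n\neq f\}}\abs{f_n}_\ph$ by comparing $\int_X\abs{f_n}_\ph$ with $\int_{\{f_n=f\}}\abs{f}_\ph$ and invoke absolute continuity of the integral of $\abs{f}_\ph$; the paper merely packages this as a uniform $\varepsilon$--$\delta$ statement about a single function $g$ close to $f$. Your observation that the hypothesis $\int_X\abs{f}_\ph\,d\mu<+\infty$ is implicitly needed is accurate and consistent with the paper, whose proof also assumes it without comment.
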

	\begin{proof}
		It suffices to show that given $\varepsilon>0$, there is $\delta>0$ such that for all measurable functions $g\colon X\to\Z$ satisfying
		\begin{equation}\label{eq: hyp scheffé}
			\mu(\{x\in X\colon f(x)\neq g(x)\})\leq\delta\text{ and }\int_X\abs{g}_\ph d\mu\leq \int_X\abs f_\ph d\mu+\delta,
		\end{equation}
		one has that \(\int_X\abs{f-g}_\ph d\mu\leq\varepsilon\).
		To this end, fix $\varepsilon>0$. Since $\int_X\lvert f\rvert_\ph d\mu<+\infty$, by Lebesgue's dominated convergence theorem there exists $\delta_0>0$ such that for all measurable subsets $A\subseteq X$, if $\mu(A)<\delta_0$ then $\int_A\lvert f\rvert_\ph d\mu<\varepsilon$. 
		Take $\delta \coloneqq\min\{\delta_0,\varepsilon\}$.
		Let $g\colon X\to\Z$ be a measurable function satisfying \eqref{eq: hyp scheffé}. If we let $A\coloneqq \{x\in X\colon f(x)\neq g(x)\}$, we have
		\[\int_A \lvert g\rvert_\ph d\mu =\int_X \lvert g\rvert_\ph d\mu-\int_{X\setminus A}\lvert g\rvert_\ph d\mu\leq \int_X \lvert f\rvert_\ph d\mu-\int_{X\setminus A}\lvert f\rvert_\ph d\mu +\delta\leq 2\eps\]
		and we can therefore conclude the proof
		\[\int_X\abs{f-g}_\ph d\mu=\int_A \abs{f-g}_\ph d\mu\leq \int_A\abs f_\ph d\mu+\int_A \abs g_\ph d\mu\leq 3\varepsilon.\qedhere\]
	\end{proof}
	We can now prove the following proposition which is the $\varphi$-integrable analogue of \cite[Prop.~3.9]{lemaitremeasurableanaloguesmall2018}.
	
	\begin{prop}\label{prop.continuitedroiteinduite} 
		Let $\ph$ be a metric-compatible function and $T\in\Aut(X,\mu)$ an ergodic transformation. 
		Consider $U\in [T]_\varphi$ and consider a measurable subset $A\subseteq X$. If $(A_n)_{n\geq 0}$ is a sequence of measurable subsets of $A$ such that $\lim_n\mu(A\setminus A_n)= 0$, then 
		\(\lim_n\d_{\varphi,T}(U_{A_n},U_A)=0.\)
	\end{prop}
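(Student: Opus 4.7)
My plan is to apply the $\ph$-version of Scheffé's lemma (Lemma \ref{lem.scheffe}) to the sequence $f_n\coloneqq c_{U_{A_n}}$ with candidate limit $f\coloneqq c_{U_A}$. This requires two ingredients: convergence in measure of $(f_n)$ to $f$, and the limsup estimate
\[
\limsup_n\int_X |f_n|_\ph\, d\mu\leq \int_X |f|_\ph\, d\mu.
\]

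The convergence in measure will come from Keane's Lemma \ref{lem: keane continuity}, which gives $d_u(U_{A_n},U_A)\to 0$. Since $T$ is aperiodic, whenever $U_{A_n}(x)=U_A(x)$ one must have $c_{U_{A_n}}(x)=c_{U_A}(x)$, so $\mu\{x:f_n(x)\neq f(x)\}\leq d_u(U_{A_n},U_A)\to 0$.

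The heart of the argument is the limsup bound, which I will obtain via a Kac-type tower decomposition. Set $V\coloneqq U_A\in[T]_\ph$; by construction $V$ preserves $A$ and is the identity on $X\setminus A$. For $x\in A_n\subseteq A$, the $U$-orbit of $x$ returns to $A_n$ exactly when the $V$-orbit does, so $U_{A_n}(x)=V^{n_{V,A_n}(x)}(x)$. The cocycle identity together with subadditivity of $\ph$ then yields
\[
|c_{U_{A_n}}(x)|_\ph\leq\sum_{k=0}^{n_{V,A_n}(x)-1}|c_V(V^k(x))|_\ph.
\]
Setting $B_n\coloneqq\bigcup_{k\geq 0}V^k(A_n)\subseteq A$, the family $\{V^k(\{x\in A_n:n_{V,A_n}(x)>k\})\}_{k\geq 0}$ is a partition of $B_n$, with disjointness coming from the definition of first return time and surjectivity from the definition of $B_n$. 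Since $V$ is measure-preserving, integrating the above inequality level by level gives
\[
\int_{A_n}|c_{U_{A_n}}|_\ph\, d\mu\leq \int_{B_n}|c_V|_\ph\, d\mu.
\]
Now Lemma \ref{lem.kacphifullgroup} ensures that $|c_V|_\ph=|c_{U_A}|_\ph$ is integrable. Since $A_n\subseteq B_n\subseteq A$, we have $\mu(A\setminus B_n)\to 0$, so by dominated convergence the right-hand side tends to $\int_A|c_V|_\ph\, d\mu=\int_X |c_{U_A}|_\ph\, d\mu$. This is the required limsup estimate, and Scheffé's lemma then delivers $\d_{\ph,T}(U_{A_n},U_A)=\int_X|f_n-f|_\ph\, d\mu\to 0$.

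The main technical point is the tower identity, which hinges on $V$ preserving $A$ and acting trivially off it; no ergodicity of $V$ (or of $T$ beyond aperiodicity) will be needed for this step.
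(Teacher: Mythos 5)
Your proposal is correct and follows essentially the same route as the paper: convergence in measure via Keane's Lemma \ref{lem: keane continuity}, a uniform bound $\int_X|c_{U_{A_n}}|_\ph\,d\mu\leq\int_X|c_{U_A}|_\ph\,d\mu$, and then the $\ph$-Scheffé Lemma \ref{lem.scheffe}. The only cosmetic difference is that the paper obtains the limsup bound by citing Lemma \ref{lem.kacphifullgroup} applied to $V=U_A$ and the subset $A_n$ (using $(U_A)_{A_n}=U_{A_n}$), whereas you re-derive that lemma's Kac-tower argument inline.
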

	\begin{proof}
		Since $\lim_n\mu(A\setminus A_n)=0$ and since the first return map is continuous with respect to the uniform metric by Lemma \ref{lem: keane continuity}, we get that $\lim_n d_u(U_{A_n},U_A)=0$. 
		This means that $(c_{U_{A_n}})_{n\geq 0}$ converges in measure to $c_{U_A}$ and therefore that $(\lvert c_{U_{A_n}}\rvert_\ph)_{n\geq 0}$ converges in measure to $\lvert c_{U_A}\rvert_\ph$. 
		Thanks to Lemma \ref{lem.kacphifullgroup}, we have for all $n\geq 0$, $d_{\ph,T}(U_{A_n},\id)\leq d_{\ph,T}(U_A,\id)$. In other words
		\[\int_X\lvert c_{U_{A_n}}(x)\rvert_\ph d\mu \leq \int_X\lvert c_{U_{A}}(x)\rvert_\ph d\mu.\] 
		Hence we can apply Lemma \ref{lem.scheffe}, yielding \[\lim_n\int_X \lvert c_{U_{A_n}}(x)-c_{U_A}(x)\rvert_\ph d\mu= 0.\] This precisely means that $\lim_n \d_{\varphi,T}(U_{A_n},U_A)= 0$, so we are done.
	\end{proof}

	Let $\varphi \colon  \R_+\to\R_+$ be a sublinear metric-compatible function and $T\in\Aut(X,\mu)$ be an aperiodic transformation. In Corollary \ref{cor.distanceinduiteaid}, we proved that for any aperiodic transformation $T\in\Aut(X,\mu)$, the quantity $\d_{\ph,T}(T_A,\id)$ tends to $0$ as $\mu(A)$ approaches $0$.
	It is natural to ask whether this holds for all aperiodic $U\in [T]_\varphi$, i.e.\ does $\d_{\ph,T}(U_A,\id)$ tends to $0$ as $\mu(A)$ approaches $0$? We were not able to answer this question, but we can prove the following much weaker statement. Its proof relies on our sublinear ergodic theorem (Theorem \ref{thm.subadditivephifullgroup}), or rather on Corollary \ref{cor.dphip tn}.
	
	\begin{prop}\label{prop. cheap return}
		Let $\ph$ be a sublinear metric-compatible function. Let $T\in\Aut(X,\mu)$ be an aperiodic transformation. Then for any aperiodic transformation $U\in [T]_\ph$ and for any measurable subset $A\subseteq X$, there exists a sequence $(A_k)_{k\geq 0}$ of measurable subsets contained in $A$ which intersect every $U$-orbit, such that $\lim_k\mu(A_k)=0$ and $\lim_k\d_{\varphi,T}(U_{A_k},\id)=0.$
	\end{prop}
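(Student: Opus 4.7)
The plan is to induce to $A$ and leverage the sublinear ergodic theorem for the induced transformation $U_A$. By Lemma~\ref{lem.kacphifullgroup}, $U_A \in [T]_\ph$, and $U_A$ inherits aperiodicity from $U$; moreover $U_{A_k}=(U_A)_{A_k}$ for any $A_k\subseteq A$. Corollary~\ref{cor.dphip tn} applied to $U_A$ therefore yields $\d_{\ph,T}(U_A^n,\id)/n \to 0$, and Theorem~\ref{thm.subadditivephifullgroup} gives the pointwise sublinear decay $\ph(|c^T_{U_A^n}(x)|)/n \to 0$ for a.e.\ $x$. We may assume that $A$ meets every $U$-orbit, else the statement is considered relative to the saturation of $A$.

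For each large integer $n$, I would apply Rokhlin's lemma to $U_A|_A$ to produce a base $B_n\subseteq A$ such that $B_n,U_AB_n,\dots,U_A^{n-1}B_n$ are pairwise disjoint and the leftover $R_n := A\setminus\bigsqcup_{i=0}^{n-1}U_A^iB_n$ has arbitrarily small measure, and set $A_k := B_n\cup R_n$. This is a subset of $A$; it meets every $U_A$-orbit in $A$ (hence every $U$-orbit meeting $A$), and $\mu(A_k)\leq\mu(B_n)+\mu(R_n)\to 0$. On $B_n$, the first return time to $A_k$ under $U_A$ equals exactly $n$, so $c^T_{U_{A_k}}(x)=c^T_{U_A^n}(x)$ there. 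On $R_n$, the return time $m(x)$ varies, but Kac's lemma applied to $(U_A)_{A_k}$ forces $\int_{A_k}m(x)d\mu = \mu(A)$, whence the ``tower over $R_n$'' $E := \bigsqcup_{x\in R_n,\,0\leq i<m(x)}\{U_A^i(x)\}$ has measure exactly $\mu(R_n)$.

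The integral $\d_{\ph,T}(U_{A_k},\id) = \int_{A_k}\ph(|c^T_{U_A^{m(x)}}(x)|)d\mu$ then splits as an integral over $B_n$ plus an integral over $R_n$. For the $R_n$-term, subadditivity of $\ph$ together with the cocycle identity bounds it by $\int_E\ph(|c^T_{U_A}|)d\mu$; since $\ph(|c^T_{U_A}|)\in\LL^1$ and $\mu(E)=\mu(R_n)\to 0$, this vanishes by absolute continuity of the integral.

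The $B_n$-term is the main obstacle, since the naive bound $\int_{B_n}\ph(|c^T_{U_A^n}|)d\mu\leq\d_{\ph,T}(U_A^n,\id)=o(n)$ does not suffice. I would resolve this via Egorov's theorem applied to $\ph(|c^T_{U_A^n}|)/n\to 0$ a.e.: a diagonal argument produces sets $E_n$ and constants $\epsilon_n\to 0$ such that $\ph(|c^T_{U_A^n}(x)|)\leq \epsilon_n n$ on $E_n$ and $\mu(E_n^c)\to 0$ fast enough. Intersecting the Rokhlin base with $E_n$ gives $\int_{B_n}\ph(|c^T_{U_A^n}|)d\mu \leq \epsilon_n n\,\mu(B_n) \approx \epsilon_n \to 0$, while the extra contribution this causes to the leftover is controlled so that $\mu(R_n)\to 0$ is preserved. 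The delicate simultaneous tuning of $n$, $\epsilon_n$, and the Egorov threshold is where the argument is most subtle.
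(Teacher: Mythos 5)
Your overall skeleton --- induce to $A$, set $V\coloneqq U_A\in[T]_\ph$ via Lemma \ref{lem.kacphifullgroup}, build height-$n$ towers for $V$ and feed the resulting return time $n$ into Corollary \ref{cor.dphip tn} --- matches the paper's, and several of your intermediate claims are correct: the first return time of $V$ to $B_n\cup R_n$ really is exactly $n$ on $B_n$ (and in fact exactly $1$ on $R_n$, since $V(R_n)$ and $V^n(B_n)$ are both disjoint from $V(B_n),\dots,V^{n-1}(B_n)$), and your treatment of the $R_n$-term by absolute continuity is fine. The gap is exactly where you flag it: the $B_n$-term. The Egorov step does not close. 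You need $n\,\mu(B_n\setminus E_n)\to 0$; since $\mu(B_n)\approx 1/n$ and Rokhlin's lemma gives no control on how $B_n$ sits relative to $E_n^c$, this forces $\mu(E_n^c)=o(1/n)$. But almost everywhere convergence of $g_n\coloneqq\ph(\lvert c_{V^n}\rvert)/n$ carries no such rate: Egorov produces, for each tolerance $\delta$, a set $E$ \emph{and} a threshold $N(E,\delta)$ beyond which convergence is uniform on $E$, and the threshold grows as $\delta$ shrinks, so requiring $\mu(E_n^c)=o(1/n)$ for $n$ past the threshold is circular. The requirement can even fail along every subsequence: for $g_n=n\,\mathds{1}_{[0,1/\log n]}$ one has $g_n\to 0$ a.e., yet any $E_n$ with $\sup_{E_n}g_n\le\epsilon_n<n$ satisfies $n\mu(E_n^c)\ge n/\log n\to\infty$. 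And once $\mu(E_n^c)\ge\mu(B_n)$, the base may lie entirely inside $E_n^c$, so your modified residual set can swallow all of $A$.

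The paper sidesteps this entirely: it replaces Rokhlin by Alpern's multiple tower theorem, which partitions $A$ into towers of heights $k$ and $k+1$ over a base $B_k$ with \emph{no} residual set, and then averages over the levels: summing $\d_{\ph,T}(V_{V^i(B_k)},\id)$ over the $k$ disjoint translates $V^i(B_k)$ gives at most $\d_{\ph,T}(V^{k},\id)+\d_{\ph,T}(V^{k+1},\id)$, so some translate $A_k=V^{i_k}(B_k)$ satisfies $\d_{\ph,T}(V_{A_k},\id)\le\bigl(\d_{\ph,T}(V^{k},\id)+\d_{\ph,T}(V^{k+1},\id)\bigr)/k$, which tends to $0$ by Corollary \ref{cor.dphip tn}. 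It is this pigeonhole over the $k$ candidate bases, not an Egorov set, that converts the $o(n)$ bound on $\d_{\ph,T}(V^n,\id)$ into an $o(1)$ bound for one base. If you wish to keep your route, it can be repaired by fixing a single Egorov set $E$ of co-measure $\delta$ and invoking a strong form of Rokhlin's lemma in which the base is chosen independent of the partition $\{E,E^c\}$, so that $n\mu(B_n\cap E^c)\le\delta$, and then letting $\delta\to 0$ along a subsequence; but as written the ``delicate simultaneous tuning'' you defer cannot be carried out.
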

	\begin{proof}
		Put $V\coloneqq U_A$ and remark that for every measurable $B\subseteq A$, we have that $V_B=U_B$. 
		As an immediate consequence of Alpern's multiple Rokhlin theorem \cite{alpernGenericPropertiesMeasure1979}\footnote{We actually only need Step 1 from the simpler proof given in \cite{eigenMultipleRokhlinTower1997}.}, for every $k\geq 0$, one can find a measurable subset $B_k\subseteq A$ which meets every $V$-orbit in $A$ and such that $n_{V,B_k}(B_k)=\{k,k+1\}$. 
		The latter implies that the $V^i(B_k)$ are disjoint for $i\in\{0,...,k-1\}$. Observe that for all $x\in X$ and $i\in\Z$ we have $n_{V,V^i(B_k)}(x)=n_{V,B_k}(V^{-i}(x))$. This implies that for all $x\in V^i(B_k)$, either $V_{V^i(B_k)}(x)=V^k(x)$ or $V_{V^i(B_k)}(x)=V^{k+1}(x)$. Therefore by integrating over the disjoint union of the $V^i(B_k)$ for $i\in\{0,...,k-1\}$ 
		we get that \[\sum_{i=0}^{k-1}\d_{\varphi,T}(V_{V^i(B_k)},\id)\leq \d_{\varphi,T}(V^{k},\id)+\d_{\varphi,T}(V^{k+1},\id),\]
		whence there exists $0\leq i_k\leq k-1$ such that 
		\[\d_{\varphi,T}(V_{V^{i_k}(B_k)},\id)\leq \frac{\d_{\varphi,T}(V^{k},\id)+\d_{\varphi,T}(V^{k+1},\id)}{k}.\]
		The set $A_k\coloneqq V^{i_k}(B_k)$ has measure less than $1/k$. Corollary \ref{cor.dphip tn} implies that the right hand side in the above formula tends to zero, which implies \[\lim_k\d_{\varphi,T}(U_{A_k},\id)=\lim_k\d_{\ph,T}(V_{A_k},\id)=0.\qedhere\]
	\end{proof}
	
	\subsection{Optimality of Belinskaya's theorem}
	
	We are now ready to prove Theorem \ref{thmIntro:belinskaya opti asym}: for any sublinear function $\ph$, Belinskaya's theorem fails if we replace integrability by $\varphi$-integrability.
	
	\begin{thm}\label{thm:genericcounterexamples}
		Let $\varphi\colon\R_+\to \R_+$ be a sublinear function and let $T_1\in\Aut(X,\mu)$ be ergodic. Then there exists an ergodic transformation $T_2\in [T_1]$ whose cocycle is $\ph$-integrable such that $T_1$ and $T_2$ have the same orbits but are not flip-conjugate.
	\end{thm}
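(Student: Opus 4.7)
The plan is to split into two cases according to whether some power of $T_1$ is ergodic, reducing first to the case where $\varphi$ is additionally metric-compatible.

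First I would apply Lemma \ref{lem.free metric-compatible} to produce a sublinear metric-compatible function $\psi$ with $\varphi(t)\leq \psi(t)$ for all $t$ large enough. By Remark \ref{rmk.restriction to metric-compatible}, any $\psi$-integrable cocycle is automatically $\varphi$-integrable, so it suffices to prove the theorem with $\psi$ in place of $\varphi$. Thus, from now on I may assume $\varphi$ is a sublinear metric-compatible function and it is enough to find $T_2 \in [T_1]_\varphi$ with the stated properties.

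Case 1: suppose $T_1^n$ is ergodic for some $n\geq 2$. Then Corollary \ref{cor.belinskayaoptimal} directly produces $T_2$; inspecting its proof (which invokes Theorem \ref{thm.phifullgroup}), the transformation $T_2 = T_{1,D}P$ obtained there lies in $[T_1]_\varphi$ and has the same orbits as $T_1$ by Lemma \ref{lem.sameorbits}, so it is exactly the $T_2$ we need.

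Case 2: suppose $T_1^n$ is non-ergodic for every $n\geq 2$. Here the construction is no longer explicit and we invoke Theorem \ref{thm.gdelta dense weakly mixing}: the set of weakly mixing measure-preserving transformations in $[T_1]_\varphi$ having the same orbits as $T_1$ is a dense $G_\delta$ in the Polish space of aperiodic elements of $[T_1]_\varphi$. In particular this set is non-empty, and any element $T_2$ of it is automatically ergodic, lies in $[T_1]_\varphi$ (so its $T_1$-cocycle is $\varphi$-integrable), and has the same orbits as $T_1$. To see that $T_1$ and $T_2$ are not flip-conjugate, observe that $T_2^n$ is ergodic for every $n\in\Z\setminus\{0\}$ because $T_2$ is weakly mixing, whereas by assumption $T_1^n$ (hence also $T_1^{-n}$) is non-ergodic for every $n\geq 2$; a flip-conjugacy would preserve ergodicity of powers, a contradiction.

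The main conceptual obstacle is of course Case 2, which relies on Theorem \ref{thm.gdelta dense weakly mixing} and hence implicitly on the fact that $[T_1]_\varphi$ is a Polish group (Theorem \ref{thm.polishgrouptopology}), on Conze's genericity result for weakly mixing first return maps, and on the sublinear ergodic theorem (Theorem \ref{thm.subadditivephifullgroup}). All the reduction work is done by Lemma \ref{lem.free metric-compatible} and the dichotomy on ergodicity of powers; the heavy lifting has been carried out in the earlier sections, so the proof itself is just a matter of assembling these two cases.
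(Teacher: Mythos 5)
Your proof is correct and follows essentially the same route as the paper: reduce to a metric-compatible $\varphi$ via Lemma \ref{lem.free metric-compatible}, then combine Corollary \ref{cor.belinskayaoptimal} (whose proof indeed yields a $T_2$ with literally the same orbits as $T_1$) with Theorem \ref{thm.gdelta dense weakly mixing}. The only, harmless, difference is the dichotomy: the paper splits on whether $T_1$ is weakly mixing, whereas you split on whether some power of $T_1$ is ergodic; both case splits work, and yours merely routes more transformations (e.g.\ irrational rotations) through the explicit branch instead of the Baire-category one.
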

	
	The proof of the theorem depends on whether $T_1$ is weakly mixing. Indeed if this is the case, then we can use Corollary \ref{cor.belinskayaoptimal}. Otherwise, we have to use the Baire category theorem. Indeed the candidate for $T_2$ is generic for the topology induced by $\d_{\ph,T_1}$.
	
	\begin{thm}\label{thm.gdelta dense weakly mixing}
		Let $\ph$ be a sublinear metric-compatible function and let $T\in\Aut(X,\mu)$ be an aperiodic element. Then the set of all elements of $[T]_\ph$ which are weakly mixing and have the same orbits as $T$ is a dense $G_\delta$ set in the Polish space of aperiodic elements of $[T]_\ph$ with respect to the topology induced by $\d_{\ph,T}$.
	\end{thm}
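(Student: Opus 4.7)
The plan is to apply the Baire category theorem to the Polish space $\APER([T]_\ph)$ of aperiodic elements of $[T]_\ph$, which is a $G_\delta$ subset of the Polish group of Theorem~\ref{thm.polishgrouptopology} and hence itself Polish. Throughout I implicitly assume $T$ is ergodic, since otherwise the target set is empty while $\APER([T]_\ph)$ is not.

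First I would establish that the target set is $G_\delta$. Weak mixing is classically $G_\delta$ in $\Aut(X,\mu)$ and transfers to $[T]_\ph$ through the continuous inclusion $[T]_\ph\hookrightarrow[T]$. The condition ``$U$ has the same orbits as $T$'' is equivalent, for $U\in[T]$, to $T\in[U]$, and rewrites as the $G_\delta$ condition that for every $\eps>0$ there is $N$ with $\mu\bigl(\bigcup_{|n|\leq N}\{U^n=T\}\bigr)>1-\eps$; by inclusion–exclusion the measure on the left is $d_u$-continuous in $U$, hence $\d_{\ph,T}$-continuous.

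The main work is density. Given an aperiodic $U\in[T]_\ph$ and $\eps>0$, I would build a weakly mixing $W\in[T]_\ph$ with $[W]=[T]$ and $\d_{\ph,T}(W,U)<\eps$ by first producing an ergodic approximation and then refining it to a weakly mixing one. Proposition~\ref{prop. cheap return}, a consequence of the sublinear ergodic theorem of Section~\ref{sec.subadditivethm}, provides a small set $A$ meeting every $U$-orbit with $\d_{\ph,T}(U_A,\id)$ small, and Corollary~\ref{cor.distanceinduiteaid} makes $\d_{\ph,T}(T_A,\id)$ small as well. The construction $W:=T_A\cdot(U_A^{-1}U)$, combined with Lemma~\ref{lem.P is Periodic} and Lemma~\ref{lem.sameorbits}, yields an ergodic $W\in[T]_\ph$ with $[W]=[T]$, while right-invariance gives $\d_{\ph,T}(W,U)=\d_{\ph,T}(T_A,U_A)\leq \d_{\ph,T}(T_A,\id)+\d_{\ph,T}(U_A,\id)$. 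A second iteration of the same scheme, now starting from the ergodic $W$ and refining the set $A$ to a subset $B\subseteq A$ via Conze's theorem applied to the induced system $T_A|_A$ together with Proposition~\ref{prop.continuitedroiteinduite} (continuity from below of the first return map), produces $W':=T_B\cdot(W_B^{-1}W)$ with $W'_B=T_B$ weakly mixing on $B$; ergodicity of $W$ makes the orbit-coverage condition for $B$ automatic.

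The key obstacle, which I expect to be the main technical point, is promoting weak mixing of the base $T_B|_B$ to weak mixing of $W'$ itself. Indeed $W'$ is a Kakutani tower over $(B, T_B|_B)$ with height function $n_{W',B}$, and it has a non-trivial eigenvalue $\lambda$ exactly when $\lambda^{n_{W',B}}$ is a $T_B$-coboundary; Conze's genericity must be invoked, possibly strengthened via a further dense $G_\delta$ condition on $B$, to rule this out for all $\lambda\neq 1$. Granting this, the two approximation steps compose to yield density of the target set in $\APER([T]_\ph)$, and the Baire category theorem converts $G_\delta$ plus density into the claimed dense $G_\delta$.
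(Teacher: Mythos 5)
Your $G_\delta$ argument and your density argument for the ``ergodic with the same orbits as $T$'' part are essentially the paper's (Propositions \ref{prop. cheap return}, \ref{prop.memeorbitesGdeltadense} and \ref{prop: ergo is dense in aper}). The problem is the final step, and it is exactly the one you flag and then assume: passing from ``$W'_B=T_B|_B$ is weakly mixing on $B$'' to ``$W'$ is weakly mixing''. This implication is false in general, and no use of Conze's theorem alone can rescue it: if $T$ is any ergodic transformation that is \emph{not} weakly mixing, Conze's theorem produces a dense $G_\delta$ of sets $B$ with $T_B$ weakly mixing on $B$, and $T$ itself is then a Kakutani tower over a weakly mixing base that fails to be weakly mixing. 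So the obstruction you describe (an eigenvalue $\lambda\neq 1$ with $\lambda^{n_{W',B}}$ a coboundary over the base) genuinely occurs, and ruling it out would require controlling the height function jointly with the base, which nothing in your outline provides. ``Granting this'' is granting the whole difficulty.

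The paper circumvents the issue rather than solving it: it never constructs a weakly mixing element. Writing $\WMIX$ as a countable intersection of $d_u$-open sets $O_{\mathcal F,\eps}$ (where $O_{\mathcal F,\eps}$ asks for a \emph{single} time $n$ at which the correlations of the finitely many sets in $\mathcal F$ are $\eps$-small, and $\mathcal F$ ranges over finite subsets of a countable dense subset of $\MAlg(X,\mu)$), it suffices by Baire to prove that each $O_{\mathcal F,\eps}$ is dense in $\APER\cap[T]_\ph$. For that, membership of the approximant $V_k=U_{Y_k}T_{X\setminus Y_k}$ in $O_{\mathcal F,\eps}$ follows merely from the fact that $V_k$ coincides on $Y_k$ with a map that is weakly mixing \emph{on $Y_k$} and $\mu(Y_k)\to 1$; the element $V_k$ itself is typically not weakly mixing (it is not even ergodic, and need not have the same orbits as $T$). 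The Baire category theorem then manufactures the genuinely weakly mixing elements with the same orbits as $T$ by intersecting these dense open sets with the dense $G_\delta$ of Proposition \ref{prop.memeorbitesGdeltadense}. To repair your proof you should replace the attempted promotion of weak mixing from the base to the tower by this finite-resolution argument; as written, the proposal has a genuine gap at its central step.
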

	
	We delay the proof of the above theorem to Section \ref{sub.aper}. Let us first explain how to deduce Theorem \ref{thm:genericcounterexamples} from Theorem \ref{thm.gdelta dense weakly mixing}. 
	
	\begin{proof}[Proof of Theorem \ref{thm:genericcounterexamples}]
		By Lemma \ref{lem.free metric-compatible}, there is a sublinear metric-compatible function $\psi$ such that $\ph(t)\leq \psi(t)$ for all $t$ large enough. 
		In particular, $\psi$-integrability implies $\ph$-integrability for $\Z$-valued functions (cf.\ Remark \ref{rmk.restriction to metric-compatible}). Hence if the theorem holds for $\psi$ then it holds for $\ph$.
		Therefore, by replacing $\ph$ by $\psi$, we may and do assume that $\ph$ is a metric-compatible function. 
		
		If $T_1$ is weakly mixing, then all its nontrivial power are ergodic. Thus Corollary \ref{cor.belinskayaoptimal} implies that there exists $T_2\in [T_1]_\ph$ such that $T_1$ and $T_2$ have the same orbits but are not flip-conjugate.
		
		If $T_1$ is not weakly mixing, then Theorem \ref{thm.gdelta dense weakly mixing} grant us the existence of some weakly mixing $T_2\in [T_1]_\ph$ such that $T_1$ and $T_2$ have the same orbits. Since $T_2$ is weakly mixing and $T_1$ isn't, they cannot be flip-conjugate.
	\end{proof}
	
	\subsection{Weakly mixing elements form a dense \texorpdfstring{$G_\delta$}{G delta} set}\label{sub.aper}
	
	This section is dedicated to the proof of Theorem \ref{thm.gdelta dense weakly mixing}. Before starting the proof, we will need some terminology and preliminary propositions. 
	
	In this section we will consider the $\ph$-integrable full groups both with the topology induced by the uniform metric $d_u$ and the their natural topology induced by $\d_{\ph,T}$. The metric $\d_{\ph,T}$ is complete so we can apply the Baire category theorem in $([T]_\varphi,\d_{\ph,T})$, see Theorem \ref{thm.polishgrouptopology}. Moreover, the topology induced by $\d_{\varphi,T}$ refines the topology induced by $d_u$, see Corollary \ref{cor: inclusions}. 
	Note that $([T]_\varphi,d_u)$ is not complete, indeed one can show that $[T]_\ph$ is dense in the complete metric space $([T],d_u)$.
	
	Denote by $\APER\subseteq\Aut(X,\mu)$ the set of aperiodic transformations.
	
	\begin{lem}
		Let $\ph$ be a metric-compatible function and let $T\in\Aut(X,\mu)$ be an aperiodic element.
		Then the set $\APER\cap [T]_\ph$ is closed in the complete metric space $([T]_\varphi,\d_{\ph,T})$ and hence it is a complete metric space itself.
	\end{lem}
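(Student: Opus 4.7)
The plan is to reduce the statement to the well-known fact that $\APER$ is closed in $(\Aut(X,\mu),d_u)$, then transfer this closure to $([T]_\ph,\d_{\ph,T})$ via the Lipschitz inclusion provided by Corollary \ref{cor: inclusions}.

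First, I would verify (or recall) that $\APER$ is $d_u$-closed in $\Aut(X,\mu)$. Given a sequence $(U_n)_{n\geq 0}$ of aperiodic transformations converging to $U$ in $d_u$, I want to show that $\mu(\{x\in X\colon U^k(x)=x\})=0$ for every $k\geq 1$. Since $d_u$ is bi-invariant, one has $d_u(U_n^k,U^k)\leq k\, d_u(U_n,U)$, and the set inclusion
\[
\{x\in X\colon U^k(x)=x\}\subseteq \{x\in X\colon U_n^k(x)=x\}\cup\{x\in X\colon U_n^k(x)\neq U^k(x)\}
\]
combined with the aperiodicity of $U_n$ yields $\mu(\{x\colon U^k(x)=x\})\leq k\,d_u(U_n,U)$. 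Letting $n\to+\infty$ gives the desired conclusion, so $U\in\APER$.

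Second, Corollary \ref{cor: inclusions} tells us that the inclusion $([T]_\ph,\d_{\ph,T})\into ([T],d_u)$ is Lipschitz, and in particular continuous. Therefore the set
\[
\APER\cap [T]_\ph=\{U\in [T]_\ph\colon U\in \APER\}
\]
is the preimage of the $d_u$-closed set $\APER$ under a continuous map, hence is $\d_{\ph,T}$-closed in $[T]_\ph$.

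Finally, Theorem \ref{thm.polishgrouptopology} asserts that $([T]_\ph,\d_{\ph,T})$ is a complete metric space. Since any closed subspace of a complete metric space is complete for the restricted metric, we conclude that $\APER\cap [T]_\ph$ is complete, as required. There is no real obstacle here; the content of the lemma is essentially the combination of the classical closedness of $\APER$ with the fact that the $\d_{\ph,T}$-topology refines the $d_u$-topology.
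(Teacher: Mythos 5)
Your proof is correct and follows essentially the same route as the paper: establish that $\APER$ is closed in $(\Aut(X,\mu),d_u)$, transfer this to $([T]_\ph,\d_{\ph,T})$ via the continuity of the inclusion from Corollary \ref{cor: inclusions}, and invoke completeness of closed subspaces of complete spaces. The only cosmetic difference is that the paper certifies $d_u$-closedness of $\APER$ via the characterization $d_u(T^n,\id)=1$ for all $n\geq 1$, whereas you give a direct sequential estimate; both are fine.
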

	\begin{proof}
		Note that $T$ is aperiodic if and only if for all $n\geq 1$ we have $d_u(T^n,\id)=1$. So the set $\APER$ is closed in $(\Aut(X,\mu),d_u)$. 
		In particular, $\APER\cap [T]_\varphi$ is closed in $([T]_\varphi,d_u)$,
		so it is also closed in $([T]_\varphi,\d_{\ph,T})$.
	\end{proof}
	
	\begin{prop}\label{prop.memeorbitesGdeltadense} 
		Let $\ph$ be a sublinear metric-compatible function and consider an aperiodic element $T\in\Aut(X,\mu)$. Then the set 
		\[\{U\in\APER\cap [T]_\varphi\colon T\text{ and }U\text{ have the same orbits}\}\]
		is a dense $G_\delta$ set in $(\APER\cap[T]_\varphi,\d_{\ph,T})$.
	\end{prop}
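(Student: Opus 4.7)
The plan has two parts: establishing the $G_\delta$ character via a criterion that is continuous on $([T]_\varphi, \d_{\varphi,T})$, and establishing density by a direct construction based on Proposition \ref{prop. cheap return}, Lemma \ref{lem.P is Periodic} and Lemma \ref{lem.sameorbits}.

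For the $G_\delta$ part, note that any $U \in [T]_\varphi$ already satisfies $U \in [T]$, so $T$ and $U$ have the same orbits if and only if $T \in [U]$. For each $N \geq 1$, set
\[B_N(U) \coloneqq \bigcup_{k=-N}^{N}\{x \in X : U^k(x) = T(x)\},\]
so that $T \in [U]$ iff $\lim_N \mu(B_N(U)) = 1$. The map $U \mapsto \mu(\{x : U^k(x) = T(x)\})$ is $d_u$-continuous, since the symmetric difference of the sets associated with $U$ and $V$ is contained in $\{x : U^k(x) \neq V^k(x)\}$, whose measure is at most $|k|\, d_u(U,V)$. As the topology induced by $\d_{\varphi,T}$ refines that of $d_u$ (Corollary \ref{cor: inclusions}), the map $U \mapsto \mu(B_N(U))$ is continuous on $([T]_\varphi, \d_{\varphi,T})$, and the set of interest equals
\[\bigcap_{m \geq 1}\bigcup_{N \geq 1}\{U \in \APER \cap [T]_\varphi : \mu(B_N(U)) > 1 - 1/m\},\]
which is $G_\delta$.

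For density, fix $U \in \APER \cap [T]_\varphi$ and $\varepsilon > 0$. Apply Proposition \ref{prop. cheap return} with $A = X$ to obtain a measurable set $A \subseteq X$ meeting every $U$-orbit, with $\mu(A)$ and $\d_{\varphi,T}(U_A, \id)$ both as small as we wish. By Lemma \ref{lem.P is Periodic}, $P \coloneqq (U_A)^{-1} U$ is periodic with fundamental domain $A$, and $P \in [T]_\varphi$ since this is a group containing $U$ and $U_A$ (the latter by Lemma \ref{lem.kacphifullgroup}). Put
\[V \coloneqq T_A P = T_A (U_A)^{-1} U.\]
Since $P \in [T]$, Lemma \ref{lem.sameorbits}(iii), applied to $T$, $P$ and $A$, shows that $V$ has the same orbits as $T$; in particular $V$ is aperiodic. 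Using right-invariance and the triangle inequality,
\[\d_{\varphi,T}(V, U) = \d_{\varphi,T}(T_A (U_A)^{-1}, \id) \leq \d_{\varphi,T}(T_A, \id) + \d_{\varphi,T}(U_A, \id),\]
the first summand tending to $0$ as $\mu(A) \to 0$ by Corollary \ref{cor.distanceinduiteaid} and the second being small by our choice of $A$; hence $\d_{\varphi,T}(V, U) < \varepsilon$ when $A$ is taken small enough.

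The main obstacle is the density step: one has to enlarge the orbit equivalence relation of $U$ to that of $T$ without disturbing the $\d_{\varphi,T}$-cocycle. The key insight is that Proposition \ref{prop. cheap return} provides a transversal $A$ of $U$-orbits so small that $U_A$ is itself $\d_{\varphi,T}$-negligible; this writes $U = U_A \cdot P$ as a small perturbation of a periodic factor $P$, and swapping the small piece $U_A$ for the equally small piece $T_A$ replaces the orbit equivalence relation of $U$ by that of $T$ while only perturbing $U$ by $T_A (U_A)^{-1}$.
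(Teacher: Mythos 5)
Your proof is correct and follows essentially the same route as the paper's: the $G_\delta$ part uses the same criterion (your sets $\{U\colon\mu(B_N(U))>1-1/m\}$ are exactly the paper's $O_{\varepsilon,n}$, with the continuity of $U\mapsto\mu(B_N(U))$ spelled out a bit more explicitly), and the density part is the same construction $V=T_A(U_A)^{-1}U$ built from Proposition \ref{prop. cheap return}, Lemma \ref{lem.P is Periodic}, Lemma \ref{lem.sameorbits} and Corollary \ref{cor.distanceinduiteaid}. The only differences are presentational (a fixed $\varepsilon$ instead of a sequence $(A_k)$).
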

	\begin{proof}
		We first prove that this set is $G_\delta$. For all $\varepsilon >0$ and $n\geq 1$, let 
		\[O_{\varepsilon,n}\coloneqq \left\{U\in [T]_\varphi\colon \mu(\{x\in X\colon T(x)\in\{U^{-n}(x),\dots,U^n(x)\}\})>1-\varepsilon\right\}.\]
		Each $O_{\varepsilon,n}$ is open in $([T]_\varphi,d_u)$ and thus also in $([T]_\ph,\d_{\ph,T})$. Moreover, we have
		\[\{U\in\APER\cap [T]_\varphi\colon T\text{ and }U\text{ have the same orbits}\}=\bigcap_{\varepsilon\in\Q_+^*}\bigcup_{n\geq 1}O_{\varepsilon,n},\]
		which is a countable intersection of open sets, thus by definition a $G_\delta$ set. 
		
		We now prove the density. Let $U\in\APER\cap [T]_\ph$. Fix a sequence $(A_k)_{k\geq 0}$ of measurable subsets of $X$ which intersect every $S$-orbit, such that $\lim_k\mu(A_k)=0$ and $\lim_k\d_{\ph,T}(U_{A_k},\id)=0$ as in Proposition \ref{prop. cheap return}. If we set $P_k\coloneqq(U_{A_k})^{-1}U$, then we get that $(P_k)_{k\geq 0}$ tends to $U$. Moreover, Corollary \ref{cor.distanceinduiteaid} implies that $(T_{A_k})_{k\geq 0}$ tends to the identity, which implies that $(T_{A_k}P_k)_{k\geq 0}$ tends to $U$. On the other hand, Lemma \ref{lem.P is Periodic} yields that the transformation $P_k$ is periodic and $A_k$ is a fundamental domain for it. Thus, the transformations $T_{A_k}P_k$ and $T$ have the same orbits by Lemma \ref{lem.sameorbits} and the proof is completed.
	\end{proof}
	
	Let $\ERG$ denote the set of ergodic transformations in $\Aut(X,\mu)$. 
	
	\begin{prop}\label{prop: ergo is dense in aper} 
		Let $\ph$ be a sublinear metric-compatible function and fix an ergodic transformation $T\in \Aut(X,\mu)$. Then $\ERG\cap [T]_\varphi$ is a dense $G_\delta$ set in $(\APER\cap[T]_\varphi,\d_{\ph,T})$. 
	\end{prop}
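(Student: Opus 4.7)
The proposition has two halves, density and $G_\delta$, and neither should present serious difficulty once Proposition \ref{prop.memeorbitesGdeltadense} is in hand.

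The plan for the density part is to observe that any $V\in\Aut(X,\mu)$ sharing almost all of its orbits with $T$ must itself be ergodic. Indeed, if $A$ is $V$-invariant then it is a union of $V$-orbits (up to a null set), hence of $T$-orbits, hence $T$-invariant; since $T$ is ergodic, $\mu(A)\in\{0,1\}$. Combined with Proposition \ref{prop.memeorbitesGdeltadense}, which asserts that the set of $V\in\APER\cap[T]_\varphi$ with the same orbits as $T$ is dense in $(\APER\cap[T]_\varphi,\d_{\varphi,T})$, this immediately yields density of $\ERG\cap[T]_\varphi$.

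For the $G_\delta$ statement, I will invoke the classical fact that $\ERG$ is a $G_\delta$ subset of $(\Aut(X,\mu),d_u)$. A direct proof uses the characterization: $U$ is ergodic if and only if for every $A$ in a fixed countable dense family $\mathcal F\subseteq \MAlg(X,\mu)$ with $\mu(A)>0$, and every rational $\varepsilon>0$, there exists $N$ such that $\mu\bigl(\bigcup_{n=0}^{N} U^{-n}A\bigr)>1-\varepsilon$. The map $U\mapsto \mu\bigl(\bigcup_{n=0}^{N} U^{-n}A\bigr)$ is $d_u$-continuous, so the displayed condition defines an open subset; intersecting over $(A,\varepsilon)$ and taking the union over $N$ expresses $\ERG$ as the countable intersection of countable unions of $d_u$-open sets, i.e.\ as a $G_\delta$.

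To transfer this to our setting, note that by Corollary \ref{cor: inclusions} the $\d_{\varphi,T}$-topology refines the $d_u$-topology on $[T]_\varphi$, so any $d_u$-$G_\delta$ subset of $[T]_\varphi$ is $\d_{\varphi,T}$-$G_\delta$. Since an ergodic transformation of an atomless probability space is automatically aperiodic (otherwise some $T^n=\id$ produces non-trivial invariant sets), we have $\ERG\cap[T]_\varphi\subseteq \APER\cap[T]_\varphi$, and intersecting the global $G_\delta$ with $\APER\cap[T]_\varphi$ yields the desired description. There is no real obstacle here: density is essentially tautological given the earlier proposition, and the $G_\delta$ character of ergodicity is a standard piece of descriptive set theory.
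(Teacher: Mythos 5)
Your density argument is exactly the paper's: combine Proposition \ref{prop.memeorbitesGdeltadense} with the observation that anything sharing its orbits with the ergodic $T$ is ergodic. That half is fine.

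The $G_\delta$ half, however, contains a genuine error: the characterization of ergodicity you propose is false. Requiring, for every $A$ in a countable \emph{dense} family $\mathcal F$ with $\mu(A)>0$ and every rational $\varepsilon>0$, that some $N$ satisfies $\mu\bigl(\bigcup_{n=0}^{N}U^{-n}A\bigr)>1-\varepsilon$ amounts to asking that the $U$-saturation of every such $A$ be conull, and this does \emph{not} characterize ergodicity once $A$ is restricted to a countable family. Concretely, take $C\subseteq[0,1]$ with $0<\mu(C)<1$ such that every nontrivial rational interval meets both $C$ and its complement in positive measure, let $U$ preserve $C$ and act ergodically on both $C$ and $X\setminus C$, and let $\mathcal F$ be the finite unions of rational intervals: every $A\in\mathcal F$ with $\mu(A)>0$ then has conull saturation, so $U$ passes your test while being non-ergodic. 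The underlying problem is that the quantity $\mu\bigl(\bigcup_{n=0}^{N}U^{-n}A\bigr)$ is not stable under small perturbations of $A$, which is precisely why one cannot pass from ``all positive-measure $A$'' (where the criterion is correct but uncountable) to a countable dense family. The fact you want is nevertheless true: the standard route uses the averaged condition $\bigl\lvert \frac1N\sum_{n=0}^{N-1}\mu(U^{-n}A\cap B)-\mu(A)\mu(B)\bigr\rvert<\varepsilon$ over pairs $A,B$ in a countable dense family, which is Lipschitz in $(A,B)$ and open in $U$, and exhibits $\ERG$ as a $G_\delta$ even for the weak topology; alternatively one can simply cite the known result, as the paper does with \cite[Thm.~3.6]{kechrisGlobalaspectsergodic2010}, that $\ERG\cap[T]$ is $G_\delta$ in $(\APER\cap[T],d_u)$. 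Your transfer step (the $\d_{\ph,T}$-topology refines $d_u$ by Corollary \ref{cor: inclusions}, so $d_u$-$G_\delta$ implies $\d_{\ph,T}$-$G_\delta$) is correct and is also how the paper concludes.
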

	\begin{proof}
		By \cite[Thm.~3.6]{kechrisGlobalaspectsergodic2010}, $\ERG\cap [T]$ is a $G_\delta$ set in $(\APER\cap [T],d_u)$.
		Thus $\ERG\cap [T]_\ph$ is a $G_\delta$ set in $(\APER\cap [T]_\ph,\d_{\ph,T})$. Finally, since $T$ is ergodic, 
		\[\ERG\cap [T]_\varphi\supseteq\{S\in\APER\cap [T]_\varphi\colon S \text{ and }T\text{ have the same orbits}\}.\] 
		Thus, Proposition \ref{prop.memeorbitesGdeltadense} yields that $\ERG\cap [T]_\varphi$ is dense in $(\APER\cap [T]_\ph,\d_{\ph,T})$ and the proof is complete.
	\end{proof}
	
	\begin{rmq} 
		The hypothesis that $\varphi$ is sublinear is necessary, as for any ergodic $T\in \Aut(X,\mu)$, we have that $\ERG\cap [T]_1$ is not dense in $\APER\cap [T]_1$. Indeed, one can define a continuous \emph{index map} $I\colon [T]_1\to\Z$ by integrating the cocycle (see \cite[Cor.~4.20]{lemaitremeasurableanaloguesmall2018} for the fact that it takes values in $\Z$). Then note that $I(U)\neq 0$ for every ergodic $U\in [T]_1$: 
		by \cite[Prop.~4.13]{lemaitremeasurableanaloguesmall2018} every ergodic $U\in[T]_1$ is either almost positive or almost negative. Then combining \cite[Prop.~4.17 and Prop.~3.4]{lemaitremeasurableanaloguesmall2018} yields that $U$ has positive or negative index, so $I(U)\neq 0$. Finally, there are aperiodic elements with index $0$: take $A\subseteq X$ measurable with $0<\mu(A)<1$, then the aperiodic transformation $U\coloneqq T_AT_{X\setminus A}\inv$ has index zero (using again \cite[Prop.~3.4]{lemaitremeasurableanaloguesmall2018}).
		By continuity of the discrete-valued index map, we conclude that $\ERG\cap [T]_1$ cannot be dense in $\APER\cap [T]_1$.
	\end{rmq}
	
	\begin{df}
		A transformation $S\in\Aut(X,\mu)$ is \defin{weakly mixing} if for all finite subsets $\mathcal F\subseteq\MAlg(X,\mu)$ and all $\varepsilon >0$, there exists $n\in\Z$ such that for all $A,B\in\mathcal{F}$,
		\[\lvert \mu(V^n(A)\cap B)-\mu(A)\mu(B)\rvert <\varepsilon.\]
	\end{df}
	
	Given a measurable subset of positive measure $A\subseteq X$ we will denote by $\mu_A$ the probability measure on $A$ defined by $\mu_A(B)\coloneqq \mu(A\cap B)/\mu(A)$. We say that a transformation $T\in\Aut(X,\mu)$ is \textit{weakly mixing on} $A$ if $T(A)=A$ and the restriction of $T$ to $A$ is weakly mixing as an element of $\Aut(A,\mu_A)$.
	The following result will be crucial in the proof of Theorem \ref{thm.gdelta dense weakly mixing}.
	
	\begin{thm}[Conze \cite{conzeEquationsFonctionnellesSystemes1972}]
		Let $T\in\Aut(X,\mu)$ be an ergodic transformation. Then the set \[\{A\in\MAlg(X,\mu)\colon T_A\text{ is weakly mixing on }A\}\] is a dense $G_\delta$ set in $(\MAlg(X,\mu),d_\mu)$ where $d_\mu(A,B)\coloneqq\mu(A\bigtriangleup B)$.
	\end{thm}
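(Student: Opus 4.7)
The plan is to treat the two conclusions, being $G_\delta$ and being dense, separately; the first is routine while the second carries the substantive content.

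For the $G_\delta$ part I would use the Cesàro characterization of weak mixing: $T_A$ is weakly mixing on $(A,\mu_A)$ if and only if for every measurable $B,C\subseteq A$,
\[
\lim_N \frac{1}{N}\sum_{n=0}^{N-1}\abs{\mu_A(T_A^{-n}B\cap C)-\mu_A(B)\mu_A(C)}=0.
\]
Fix a countable $d_\mu$-dense family $\{B_i\}_{i\in\N}$ in $\MAlg(X,\mu)$; by standard approximation it suffices to test the condition with $B=B_i\cap A$ and $C=B_j\cap A$. For each triple $(i,j,\eps)\in\N^2\times\Q_+^*$, the set of $A$ such that for some $N$ the corresponding Cesàro average is less than $\eps$ is open in $(\MAlg(X,\mu),d_\mu)$, thanks to the continuity of $A\mapsto T_A$ in the uniform topology provided by Lemma \ref{lem: keane continuity} (which makes all relevant correlations continuous functions of $A$). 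Intersecting over the countable collection of triples yields the desired $G_\delta$ description.

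For density, given $A_0\in\MAlg(X,\mu)$ and $\eps>0$, the strategy is to use Rokhlin's lemma to arrange a tall tower for $T$ and then to realize, by a judicious choice of levels, an induced transformation that is weakly mixing. Concretely, apply Rokhlin's lemma to obtain a tower $(B,TB,\ldots,T^{h-1}B)$ with $h$ very large and $h\mu(B)>1-\eps/4$. Approximate $A_0$ within $\eps/2$ in $d_\mu$ by a set $A'$ which is a union of entire levels $T^i(B)$ for $i$ in a suitable subset of $\{0,\ldots,h-1\}$. The induced map $T_{A'}$ then admits an explicit combinatorial description: it cycles among the chosen levels along the $T$-orbits, with the transition on each level essentially given by translation on $B$. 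By refining the choice of levels and invoking the classical Kakutani skyscraper correspondence (any ergodic transformation on a set of appropriate measure is realizable as an induced map), combined with the Halmos genericity of weak mixing in $(\Aut,d_u)$, we can arrange $T_{A'}$ to be weakly mixing on $A'$ while maintaining $d_\mu(A',A_0)<\eps$.

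The main obstacle is this last step: one must check that the combinatorial flexibility in choosing which tower levels to include is rich enough to produce a weakly mixing induced transformation, and not merely ergodic ones, while simultaneously respecting the $d_\mu$-approximation of $A_0$. The key input is that for $T$ ergodic and very tall Rokhlin towers, the induced transformations on subsets built from tower levels are essentially arbitrary measure-preserving transformations on $(A',\mu_{A'})$ up to small perturbation; combined with the Baire-category fact that weakly mixing transformations are dense in $(\Aut,d_u)$, this delivers density. Careful bookkeeping of measure within the residual set $X\setminus\bigsqcup_i T^i(B)$ is the technical point that makes the construction run.
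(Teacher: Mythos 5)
The paper offers no proof of this statement: it is quoted as an external result from Conze's 1972 article, so there is no internal argument to compare yours against. Evaluated on its own terms, your proposal settles the routine half but not the substantive one. The $G_\delta$ part is essentially fine, modulo two points you should make explicit: you need $\mu(A)$ bounded away from $0$ for $\mu_A$ and the correlations to depend continuously on $A$ (so work on the open sets $\{\mu(A)>1/k\}$ and treat the null class separately), and your characterization only asserts $\liminf_N$ of the Ces\`aro averages is $0$, which must be upgraded to $\lim_N=0$; this does hold (Wiener's lemma shows the averages of the squared correlations always converge), but it is simpler to use the ``for every finite family and every $\eps>0$ there exists $n$'' formulation of weak mixing that the paper itself uses for $\WMIX$, which yields open conditions directly.

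The density argument contains a genuine gap, and you have in fact pointed at it yourself. The key claim that ``induced transformations on subsets built from tower levels are essentially arbitrary measure-preserving transformations on $(A',\mu_{A'})$ up to small perturbation'' is false. If $A'$ is a union of levels of a Rokhlin tower with base $B$, then $T_{A'}$ sends a point of a chosen level to the next chosen level above it by the corresponding power of $T$, and only at the top of a column does it pass through $T_B$; the induced map is completely determined by $T$, the tower, and the set of chosen levels, and is (up to the residual set) a Kakutani tower over $T_B$ with a height function you control --- nowhere near an arbitrary element of $\Aut(A',\mu_{A'})$. The companion appeal to ``any ergodic transformation on a set of appropriate measure is realizable as an induced map'' is also false in general: by Abramov's formula $h(T_A)=h(T)/\mu(A)$, a zero-entropy $T$ only induces zero-entropy maps, and more finely all induced maps of $T$ lie in its Kakutani equivalence class, a proper subclass of the ergodic transformations. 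So the Halmos genericity of weak mixing in $(\Aut(X,\mu),d_u)$ cannot be transported to the family of induced maps in the way you suggest. Whether the highly constrained family $\{T_{A'}\}$ contains weakly mixing members arbitrarily close to a given $A_0$ is precisely the content of Conze's theorem; his proof goes through the cohomological equation $f\circ T=\lambda^{\mathds{1}_A}f$ that characterizes the eigenvalues of $T_A$, showing that for a residual set of $A$ it admits no nontrivial solution. Your sketch does not supply a substitute for that step.
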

	
	Denote by $\WMIX$ the set of weakly mixing transformations of $\Aut(X,\mu)$.
	We are finally ready to prove Theorem \ref{thm.gdelta dense weakly mixing} which can be reformulated as follows. 
	
	\begin{thm} 
		Let $\ph$ be a sublinear metric-compatible function and let $T\in\Aut(X,\mu)$ be an ergodic transformation. Then the set
		\[\{U\in \WMIX\cap[T]_\varphi\colon T\text{ and }U\text{ have the same orbits}\}\] is a dense $G_\delta$ set in $(\APER\cap [T]_\ph,\d_{\ph,T})$.
	\end{thm}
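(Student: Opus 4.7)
The plan is to express the target set as the intersection of two dense $G_\delta$ subsets of the Polish space $(\APER \cap [T]_\ph, \d_{\ph,T})$ (which is Polish by Theorem \ref{thm.polishgrouptopology}, since $\APER$ is closed in $\Aut(X,\mu)$), and then invoke Baire's theorem. The ``same orbits as $T$'' locus is dense $G_\delta$ by Proposition \ref{prop.memeorbitesGdeltadense}. Classically, $\WMIX$ is $G_\delta$ in $(\Aut(X,\mu), d_u)$; combined with the continuous inclusion $(\APER \cap [T]_\ph, \d_{\ph,T}) \hookrightarrow (\Aut(X,\mu), d_u)$ from Corollary \ref{cor: inclusions}, this shows that $\WMIX \cap [T]_\ph$ is $G_\delta$ in our Polish space. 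It therefore remains to prove that $\WMIX$ is dense there.

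Fix $U_0 \in \APER \cap [T]_\ph$ and $\eps > 0$. By Proposition \ref{prop.memeorbitesGdeltadense} I may assume $U_0$ has the same orbits as $T$, so $U_0$ is ergodic. Following the construction used in the proof of Proposition \ref{prop. cheap return}, whose engine is the sublinear ergodic theorem \ref{thm.subadditivephifullgroup} combined with Alpern's multiple Rokhlin tower, I choose a small-measure section $A$ of $U_0$'s orbits such that $P \coloneqq ((U_0)_A)^{-1} U_0$ is periodic with fundamental domain $A$ (Lemma \ref{lem.P is Periodic}), the orbit-size function of $P$ takes the two consecutive values $k$ and $k+1$ on sets of positive measure, and the transformation $W \coloneqq T_A P$ satisfies $\d_{\ph,T}(W, U_0) < \eps$ and has the same orbits as $T$ (Lemma \ref{lem.sameorbits}). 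By Conze's theorem, the sections $A$ for which $T_A$ is weakly mixing on $A$ form a dense $G_\delta$ subset of $(\MAlg(X,\mu),d_\mu)$; density allows one to perturb $A$ so that it simultaneously lies in Conze's set, which by Lemma \ref{lem.sameorbits} yields $W_A = T_A$ weakly mixing on $A$.

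The main obstacle is then to upgrade the weak mixing of $W_A$ on $A$ to weak mixing of $W$ on the whole of $X$. Given an eigenfunction $f \circ W = \lambda f$ with $|\lambda|=1$, restriction to $A$ and the identities $W_A = T_A$ and (by Lemma \ref{lem.sameorbits}) $n_{W,A} = $ orbit-size of $P$ yield the twisted eigenvalue equation $g \circ T_A = \lambda^{n_{W,A}} g$ for $g \coloneqq f|_A$. Weak mixing of $T_A$ forces $|g|$ to be constant, so $g$ is $S^1$-valued (up to a scalar), and the equation asserts that $\lambda^{n_{W,A}}$ is an $S^1$-valued coboundary over $T_A$. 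Using the non-constancy of $n_{W,A}$ (values $k$ and $k+1$ on disjoint sets of positive measure) together with the total ergodicity of $T_A$ (a consequence of weak mixing), one should rule out such non-trivial coboundaries and conclude $\lambda = 1$. This cocycle-cohomological step is the delicate technical heart of the proof: it requires genuine care in the joint choice of $A$ (to meet both Conze's condition and the Alpern structure) and $P$ to ensure the return-time cocycle is cohomologically rigid enough to destroy all non-trivial eigenvalues.
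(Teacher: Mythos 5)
Your reduction (intersect the ``same orbits'' locus from Proposition \ref{prop.memeorbitesGdeltadense} with $\WMIX\cap[T]_\ph$ and apply Baire) and your proof that $\WMIX\cap[T]_\ph$ is $G_\delta$ match the paper. The density argument, however, takes a different and much harder route and has two genuine gaps. First, the joint choice of $A$: the sections supplied by Proposition \ref{prop. cheap return} are specific sets $V^{i_k}(B_k)$ coming from Alpern's theorem, and you then propose to ``perturb $A$'' into Conze's dense $G_\delta$. But $\d_{\ph,T}((U_0)_A,\id)$ is not known to depend continuously on $A$ --- whether $A\mapsto U_A$ is continuous from $\MAlg(X,\mu)$ to $([T]_\ph,\d_{\ph,T})$ is explicitly left open in the paper, which only proves continuity ``from below'' (Proposition \ref{prop.continuitedroiteinduite}) and the existence of \emph{some} good sections (Proposition \ref{prop. cheap return}). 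After perturbing $A$ you therefore lose control of $\d_{\ph,T}(T_AP,U_0)$ and of the two-valued return-time structure simultaneously. Second, and more seriously, the ``upgrade'' step is not a verification you can defer: $W=T_AP$ is exactly the Kakutani tower over the base $T_A$ with height function $n_{W,A}\in\{k,k+1\}$, and a tower over a weakly mixing base need not be weakly mixing. The set of $\lambda$ for which $g\circ T_A=\lambda^{n_{W,A}}g$ admits a solution is governed by the cohomology class of $\lambda^{\mathbf 1_B}$ over $T_A$, where $B=\{n_{W,A}=k+1\}$, and for an arbitrary such $B$ this can be nontrivial for $\lambda\neq 1$; coprimality of $k$ and $k+1$ alone does not rule it out. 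You acknowledge this is the ``delicate technical heart'' but do not carry it out, so as written the proof is incomplete.

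The paper sidesteps both issues by never producing a genuinely weakly mixing approximant. Writing $\WMIX$ as a countable intersection of $d_u$-open sets $O_{\mathcal F,\eps}$ (over a countable dense family of finite subsets $\mathcal F$ of $\MAlg(X,\mu)$), Baire reduces density of $\WMIX$ to density of each single $O_{\mathcal F,\eps}$, and for that it suffices to exhibit, near a given ergodic $U$, an element that is merely ``$\eps$-weakly mixing along $\mathcal F$''. The element $V_k=U_{Y_k}T_{X\setminus Y_k}$, with $Y_k$ given by Conze's theorem applied to the induced map $U_{X_k}$ and $\mu(Y_k)\to 1$, does the job: it is weakly mixing \emph{on} $Y_k$, which already places it in $O_{\mathcal F,\eps}$ once $\mu(Y_k)$ is close enough to $1$, and it converges to $U$ by Propositions \ref{prop.continuitedroiteinduite} and Corollary \ref{cor.distanceinduiteaid}. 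No global weak mixing of any single approximant is needed; to salvage your approach you would have to prove the cocycle-rigidity statement for a carefully engineered $B$, which the paper's argument shows is unnecessary.
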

	\begin{proof}
		By the Baire category theorem, the intersection of two dense $G_\delta$ subsets is a dense $G_\delta$ subset.
		Hence by Proposition \ref{prop.memeorbitesGdeltadense}, it suffices to show that
		$\WMIX\cap [T]_\varphi$
		is a dense $G_\delta$ set in $(\APER\cap[T]_\varphi,\d_{\ph,T})$, which will occupy the remainder of the proof.
		
		By definition, a transformation $U$ is weakly mixing if and only if
		for all finite subsets $\mathcal F\subseteq\MAlg(X,\mu)$ and all $\varepsilon >0$,
		it belongs to the set $O_{\mathcal F,\varepsilon}$ defined by:
		\[O_{\mathcal F,\varepsilon}\coloneqq \left\{V\in\Aut(X,\mu)\colon \exists n\in\Z,\ \forall A,B\in \mathcal F,\ \lvert \mu(V^n(A)\cap B)-\mu(A)\mu(B)\rvert <\varepsilon\right\}.\]
		Observe that each $O_{\mathcal F,\varepsilon}$ is open in $(\Aut(X,\mu),d_u)$. As before, denote by $d_\mu$ the metric on $\MAlg(X,\mu)$ defined by $d_\mu(A,B)=\mu(A\bigtriangleup B)$. 
		
		\begin{claim}\label{claim.toomany lines of inequalities}
			Let $\mathcal F=\{A_1,...,A_n\}$ and $\mathcal F'=\{A'_1,...,A'_n\}$ be subsets of $\MAlg(X,\mu)$. Fix $\eps>0$. If for every $i\in\{1,\ldots,n\}$ one has  $\mu(A_i\bigtriangleup A'_i)<\eps$, then
			$$O_{\mathcal F,\eps}\subseteq O_{\mathcal F',5\eps}.$$
		\end{claim}
		\begin{cproof}
			Let $V\in O_{\mathcal F,\eps}$. Fix $n\in\Z$ such that for all $i,j\in\{1,\dots,n\}$ we have $\lvert \mu(V^n(A_i)\cap A_j)-\mu(A_i)\mu(A_j)\rvert<\eps$. We first remark that for every measurable $B\subset X$ and $i\in\{1,\ldots,n\}$, we have $\lvert\mu(B)\mu(A_i)-\mu(B)\mu(A_i')\rvert<\eps$ and $\lvert \mu(B\cap A_i')-\mu(B\cap A_i)\rvert<\eps$. The result now follows from the triangle inequality and the fact $V$ preserves $\mu$:
			\begin{align*}
				\lvert \mu(V^n(A_i')\cap A_j')-\mu(A_i')\mu(A_j')\rvert 
				&< \lvert \mu(V^n(A_i')\cap A_j')-\mu(A'_i)\mu(A_j)\rvert +\eps\\
				&< \lvert \mu(V^n(A_i')\cap A_j')-\mu(A_i)\mu(A_j)\rvert +2\eps\\
				&< \lvert \mu(V^n(A_i')\cap A_j)-\mu(A_i)\mu(A_j)\rvert +3\eps\\
				&=\lvert \mu(A_i'\cap V^{-n}(A_j))-\mu(A_i)\mu(A_j)\rvert +3\eps\\
				&<\lvert \mu(A_i\cap V^{-n}(A_j))-\mu(A_i)\mu(A_j)\rvert +4\eps\\
				&= \lvert \mu(V^n(A_i)\cap A_j)-\mu(A_i)\mu(A_j)\rvert +4\eps\\
				&< 5\eps.\qedhere
			\end{align*}
		\end{cproof}
		
		Since $(X,\mu)$ is standard, we can fix a countable dense subset $\mathcal M$ of $(\MAlg(X,\mu),d_\mu)$. 
		It follows from the Claim \ref{claim.toomany lines of inequalities} that 
		\begin{equation}\label{eq.WMIXGdelta}\WMIX = \bigcap_{\varepsilon \in\Q_+^*}\bigcap_{\mathcal F\subseteq \mathcal M \text{ finite}}O_{\mathcal F,\varepsilon}.\end{equation}
		In particular $\WMIX$ is a $G_\delta$ set in $(\Aut(X,\mu),d_u)$ and hence $\WMIX\cap [T]_\ph$ is a $G_\delta$ set in $([T]_\ph,\d_{\ph,T})$.
		
		We now prove that $\WMIX$ is dense. By the Baire category theorem, it is enough to show that each $O_{\mathcal{F},\eps}$ is dense in $(\APER\cap [T]_\ph,\d_{\ph,T})$. By Proposition \ref{prop: ergo is dense in aper} the set $\ERG\cap [T]_\varphi$ is dense in $\APER\cap [T]_\ph$, so it is enough to prove that 
		\begin{equation}\label{eq.aim of end of proof}
			\ERG\cap [T]_\ph\subseteq \overline{O_{\mathcal F,\eps}\cap \APER\cap [T]_\ph}.
		\end{equation}
		
		So let us fix a finite subset $\mathcal F\subseteq \MAlg(X,\mu)$, a positive real $\eps>0$ and an ergodic transformation $U\in \ERG\cap [T]_\ph$. 
		
		We let $(X_k)_{k\geq 0}$ be a sequence of measurable subsets such that $ \mu(X_k)=1- 2^{-k}$. For all $k\geq 0$, we apply Conze's Theorem to the transformation $U_{X_k}$, which is ergodic on $X_k$, to find a measurable subset $Y_k\subseteq X_k$ such that $\mu(Y_k)>1-2^{-k+1}$ and $U_{Y_k}$ is weakly mixing on $Y_k$.
		Set $V_k\coloneqq U_{Y_k}T_{X\setminus Y_k}$. We claim that $(V_k)_{k\geq 0}$ tends to $U$. Indeed since $\lim_k\mu(Y_k)=1$, Proposition \ref{prop.continuitedroiteinduite} yields that $U_{Y_k}$ tends to $U$ while 
		Corollary \ref{cor.distanceinduiteaid} gives us that $T_{X\setminus Y_k}$ tends to the identity. 
		
		\begin{claim}\label{claim. victory}
			For $k$ large enough, we have that $V_k\in O_{\mathcal F,\eps}$.
		\end{claim}
		\begin{cproof}
			For all $k\geq 0$, put $\mathcal{F}_k\coloneqq\{A\cap Y_k\colon A\in\mathcal{F}\}$. 
			Since $U_{Y_k}$ is weakly mixing on $Y_k$, we have $U_{Y_k}\in O_{\mathcal{F}_k,\eps/5}$. 
			By construction, the transformations $U_{Y_k}$ and $V_k$ coincide on $Y_k$, so we also have $V_k\in O_{\mathcal{F}_k,\eps/5}$.
			Since $\lim_k\mu(Y_k)=1$, for $k$ large enough and all $A\in\mathcal{F}$, we have $\mu(A\bigtriangleup (A\cap Y_k))<\eps/5$. 
			We thus get that $V_k\in O_{\mathcal{F},\eps}$ for $k$ large enough by Claim \ref{claim.toomany lines of inequalities}.
		\end{cproof}
		
		It follows immediately from Claim \ref{claim. victory} that any ergodic element in $[T]_\ph$ is a limit of aperiodic elements in $O_{\mathcal F,\eps}\cap [T]_\ph$. This shows the inclusion \eqref{eq.aim of end of proof}, ending the proof of the theorem.
	\end{proof}

	\appendix
	\section{Proof of Belinskaya's theorem}
	
	In this appendix, we present a short proof of Belinskaya's theorem due to Katznelson which is not publicly available to our knowledge \cite{katznelsonLecturesOrbitEquivalence1980}. 
	As in Belinskaya's original proof, a key step is the following theorem, of independent interest. 
    To lighten notation, given a point $x\in X$, a map $T\colon X\to X$, and a subset $I\subseteq \Z$, we will write
	\[ T^I(x)\coloneqq \{T^i(x)\colon i\in I\}.\]
	
	\begin{thm}[Katznelson]\label{thm: belinskaya commensurates}
		Let $T$ be an aperiodic measure-preserving transformation, suppose $U\in \Aut(X,\mu)$ has the same orbits as $T$ and that for almost every $x\in X$, the symmetric difference of the respective positive $T$ and $U$ orbits \(T^{\N}(x)\bigtriangleup  U^\N(x)\) is finite. 
		Then $T$ and $U$ are conjugate.
	\end{thm}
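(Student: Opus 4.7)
The strategy is to construct the conjugating transformation $S \in \Aut(X,\mu)$ explicitly, using the fact that the hypothesis forces $U$ on each $T$-orbit to be a bijection ``asymptotically equal'' to $T$. Identify each $T$-orbit with $\Z$ via the $T$-label $k \leftrightarrow T^k(x)$; then $U$ acts as a bijection $\sigma_x\colon \Z \to \Z$ with a single orbit, and the hypothesis translates to $\sigma_x^{\N}(0) \bigtriangleup \N$ finite. Applying the hypothesis simultaneously at every $T^k(x)$ (which is valid for a.e.\ $x$, since orbits are countable) yields the stronger statement that $\sigma_x^\N(k) \bigtriangleup [k,\infty)$ is finite for every $k \in \Z$.

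From this, a triangle inequality on symmetric differences gives a quantitative control: writing $k = \sigma_x^n(0)$ and comparing $\N$, $\sigma_x^\N(0)$, $\sigma_x^\N(k)$, and $[k,\infty)$, one finds that $|\sigma_x^n(0) - n|$ is bounded in $n$ on each orbit. In particular $\sigma_x^n(0) \to +\infty$ as $n \to +\infty$, and an analogous computation on backward orbits (using $T^{-\N}(x) \bigtriangleup U^{-\N}(x) = T^\N(x)\bigtriangleup U^\N(x)$ up to one point) shows $\sigma_x^{-n}(0)\to -\infty$. So on each orbit, $U$ is abstractly conjugate to $T$ via the orbitwise bijection $\phi_x \colon n \mapsto \sigma_x^n(0)$.

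To globalize these orbitwise conjugacies, I would fix a Borel linear order on $X$ and use it to select, on each orbit, a canonical base point $x_0(x)$: for instance the $T$-minimum of the a.e.\ finite nonempty set $T^\N(x) \bigtriangleup U^\N(x)$ (or some similar canonical choice derived from the exceptional set). One then defines $S$ orbitwise by $S(T^k(x_0)) \coloneqq U^k(x_0)$. The relation $STS^{-1} = U$ is automatic, since $T$ acts as the $+1$ shift on $T$-labels and $U$ as the $+1$ shift on $U$-labels from the same base. Measurability of $S$ follows from the Borel nature of all the operations involved, since at each step we only manipulate the finite sets $T^\N(x)\bigtriangleup U^\N(x)$.

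The main obstacle is to check that $S$ actually preserves $\mu$: a priori, the orbitwise matching could distort the measure. The standard way around this is to express $S$ as a well-controlled limit of measure-preserving elements of $[T]$ — approximating the orbitwise conjugacy $\phi_x$ by restricting to larger and larger finite portions of each orbit where the $T$- and $U$-labellings agree, which gives elements of the full group (hence automatically measure-preserving), and then passing to a limit in the uniform metric on $\Aut(X,\mu)$. Alternatively, one invokes Rokhlin towers for $T$ of increasing height, observes that on the bulk of each tower $U$ acts as $T$ up to the finitely many exceptional points, and pieces together $S$ tower by tower. This step is the technical core of Katznelson's argument and where the finiteness-of-symmetric-difference hypothesis is used in its full strength.
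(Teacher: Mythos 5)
Your orbitwise analysis is sound, but the globalization step contains a genuine gap that cannot be repaired as stated. The set $T^{\N}(x)\bigtriangleup U^{\N}(x)$ depends on the point $x$, not only on its orbit, so taking its $T$-minimum does not produce a single base point per orbit, and the definition $S(T^k(x_0))\coloneqq U^k(x_0)$ is therefore not well posed. Worse, no measurable choice of base point per orbit can exist for an aperiodic $T$: a measurable set $D$ meeting almost every orbit exactly once would give $X=\bigsqcup_{n\in\Z}T^n(D)$ up to null sets, hence $1=\sum_{n\in\Z}\mu(D)$, which is impossible. So any construction of $S$ that begins by selecting a transversal is dead on arrival. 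Relatedly, your last paragraph leaves the measure-preservation of $S$ as a sketch (limits of full-group elements, Rokhlin towers) without specifying the approximants or why they converge; as written this is the difficulty restated rather than resolved, and it is in fact not where the real work lies.

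The paper's (Katznelson's) proof sidesteps both issues with a single pointwise device: for a.e.\ $x$ the quantity $\tau_x(j)=\lvert T^{\N+j}(x)\setminus U^{\N}(x)\rvert-\lvert U^{\N}(x)\setminus T^{\N+j}(x)\rvert$ is finite by hypothesis and increases by exactly $1$ when $j$ does, so there is a unique $j(x)$ with $\tau_x(j(x))=0$; one sets $S(x)\coloneqq T^{j(x)}(x)$. This requires no base point, is manifestly measurable, and places $S$ in $[T]$, so once $S$ is shown to be bijective it preserves $\mu$ automatically (decompose $X=\bigsqcup_n\{x\colon j(x)=n\}$ and use that $T^n$ preserves $\mu$ on each piece). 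The intertwining $SU=TS$ then falls out of the balancing condition by removing one point from each side. If you want to salvage your approach, replace the base-point selection by such a pointwise, orbit-equivariant matching; the asymptotic comparison of $T^{\N}(x)$ and $U^{\N}(x)$ that you derived is exactly what guarantees $\tau_x$ is finite-valued.
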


	\begin{proof}
		We will explicitly define an element $S$ in $[T]$ such that  $U=S^{-1}TS$. This will be done thanks to the following claim.
		\begin{claim*}\label{claim.defS}
			For almost every $x\in X$, there exists a unique $j(x)\in\Z$ such that \[\left\lvert  T^{\N+j(x)}(x)\setminus U^{\N}(x)\right\rvert=\left\lvert  U^\N(x)\setminus T^{\N+j(x)}(x)\right\rvert\]
		\end{claim*}
		\begin{cproof}
			For almost every $x\in X$, consider the function $\tau_x\colon\Z\to\Z$ defined by
			\[ \tau_x(j)\coloneqq \left\lvert  T^{\N+j}(x)\setminus U^{\N}(x)\right\rvert-\left\lvert  U^\N(x)\setminus T^{\N+j}(x)\right\rvert.\]
			Remark that by assumption for almost every $x$, the value $\tau_x(j)$ is finite for all $j\in\mathbb Z$. By considering the two cases $T^j(x)\in U^\N(x)$ and $T^j(x)\not\in U^\N(x)$, we see that $\tau_x(j+1)=\tau_x(j)-1$ for all $j\in\Z$. 
			It follows that $\tau_x(j)=\tau_x(0)-j$ for all $j\in\Z$ so $j(x)\coloneqq\tau_x(0)$ is the unique element we seek.
		\end{cproof}
		
		We set $S(x)\coloneqq T^{j(x)}(x)$. By the above claim, $S(x)$ is the unique element of the $T$-orbit of $x$ satisfying 
		\begin{equation}\label{eq.S(x)definition}
			\abs{T^\N(S(x))\setminus U^\N(x)}=\abs{U^\N(x)\setminus  T^\N(S(x))}.
		\end{equation}
		By considering whether none, only one, or both of the points $x$ and $S(x)$ belong to $T^{\N}(S(x))\cap U^\N(x)$,
		we see that removing the point $S(x)$ from $T^{\N}(S(x))$ and the point $x$ from $U^\N(x)$ does not perturb the above equation, so that  
		\[\abs{T^{\N+1}(S(x))\setminus U^{\N+1}(x)}=
		\abs{U^{\N+1}(x)\setminus  T^{\N+1}(S(x))}.\]
		This can be rewritten as
		\[\left|T^\N(TS(x))\setminus U^\N(U(x))\right|=\left|U^\N(U(x))\setminus T^\N(TS(x))\right|,\]
		which by equation \eqref{eq.S(x)definition} yields the desired equivariance condition
		\begin{equation*}\label{eq:intertwine}	SU(x)=TS(x).
		\end{equation*}
		We now have to check that $S\in [T]$.
		Using that $T$ and $U$ are invertible and a straightforward induction, we obtain that $SU^n(x)=T^nS(x)$ for all $n\in\Z$. In particular $S$ induces a bijection from the $T$-orbit of $x$ to the $U$-orbit of $S(x)$. Since $S(x)=T^{j(x)}(x)$ belongs to the $T$-orbit of $x$, which coincides with the $U$-orbit of $x$, we conclude that $S$ induces a bijection on each $T$-orbit, in particular $S$ is bijective. Finally we check that $S$
		is measure-preserving. The sets $A_n\coloneqq \{ x\in X\colon S(x)=T^n(x)\}$ for $n\in\Z$ form a partition of $X$. If $B\subseteq X$ is measurable, we write $B=\bigsqcup_n A_n\cap B$ so that
		$\mu(S(B))=\sum_n\mu(T^n(A_n\cap B))=\sum_n\mu(A_n\cap B)=\mu(B)$. This ends the proof of Theorem \ref{thm: belinskaya commensurates}.
	\end{proof}
	
	Given $T\in \Aut(X,\mu)$, denote by $\mathcal R_T\subseteq X\times X$ the equivalence relation whose classes are the $T$-orbits.
	Before proceeding with the proof of Belinskaya's theorem, we recall the following well-known lemma. Its usefulness towards proving Belinskaya's theorem was pointed out to us by Todor Tsankov.
	\begin{lem}[Mass-transport principle]\label{lem:MassTransport}  
		Let $T\in \Aut(X,\mu)$ and $f\colon \mathcal R_T\to \N$ be a measurable map. Then
		\[ \int_X\sum_{n\in\Z} f(x,T^n(x))d\mu=\int_X\sum_{n\in\Z} f(T^n(x),x)d\mu.\]   
	\end{lem}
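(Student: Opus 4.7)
The plan is to prove the identity term-by-term in $n$ and then reassemble by Tonelli's theorem, using only that $T$ (and hence each $T^n$) is measure-preserving.

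First I will fix $n \in \mathbb{Z}$ and treat the single summand. Since $T^n \in \Aut(X,\mu)$ is measure-preserving, applying the change of variable $y = T^n(x)$ (equivalently, $x = T^{-n}(y)$) to the integral on the left gives
\[ \int_X f(x, T^n(x)) \, d\mu(x) = \int_X f(T^{-n}(y), y) \, d\mu(y). \]
This is simply the pushforward formula $(T^n)_*\mu = \mu$ applied to the measurable function $y \mapsto f(T^{-n}(y), y)$.

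Next I will sum over $n \in \mathbb{Z}$ on both sides. Since $f$ takes values in $\N$, all terms are non-negative, so Tonelli's theorem permits interchanging the sum and the integral freely on either side. After summing, the right-hand side becomes $\int_X \sum_{n \in \mathbb{Z}} f(T^{-n}(y), y)\, d\mu(y)$, and reindexing with $m = -n$ yields $\int_X \sum_{m \in \mathbb{Z}} f(T^{m}(y), y)\, d\mu(y)$, which is exactly the right-hand side of the stated identity. Combining this with the original left-hand side, also rewritten via Tonelli as $\int_X \sum_{n \in \mathbb{Z}} f(x, T^n(x))\, d\mu(x)$, completes the proof.

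There is essentially no obstacle here: the argument is a two-line change of variable plus Tonelli. The only mild subtlety is that the sums may be infinite pointwise, which is why non-negativity of $f$ matters (so that Tonelli rather than Fubini applies and no integrability hypothesis is needed); both sides of the identity are then allowed to take the value $+\infty$, and the equality still holds.
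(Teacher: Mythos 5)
Your proof is correct and follows exactly the paper's argument: Tonelli to interchange sum and integral (justified by non-negativity of $f$), the change of variable $y=T^n(x)$ using that $T^n$ preserves $\mu$, and the reindexing $n\mapsto -n$. Nothing to add.
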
  
	\begin{proof}    
		Since $f$ is non-negative, Tonelli's theorem tells us that
		\begin{align*}
			\int_X\sum_{n\in\Z} f(x,T^n(x))d\mu&=\sum_{n\in\Z}\int_Xf(x,T^n(x))d\mu\\
			&=\sum_{n\in\Z}\int_Xf(T^{-n}(x),x)d\mu\\
			&=\sum_{n\in\Z}\int_X f(T^n(x),x)d\mu\\
			&=\int_X\sum_{n\in\Z} f(T^n(x),x)d\mu.
			\qedhere\end{align*} 
	\end{proof}

	\begin{thm}[Belinskaya's theorem]
		Let $T\in \Aut(X,\mu)$ be ergodic, let $U\in [T]_1$ have the same orbits as $T$. Then $T$ and $U$ are flip-conjugate.
	\end{thm}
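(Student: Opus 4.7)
The plan is to apply Theorem \ref{thm: belinskaya commensurates}, so my goal is to show that $T^\N(x) \bigtriangleup U^\N(x)$ is finite for almost every $x$, possibly after replacing $U$ with $U^{-1}$. I take as implicit that $U$ and $T$ share orbits (otherwise the statement fails, e.g.\ for $U = \id$); combined with ergodicity of $T$, this forces $U$ to be ergodic, since any $U$-invariant set is a union of common orbits hence $T$-invariant. Identify the $T$-orbit of $x$ with $\Z$ via $n \leftrightarrow T^n(x)$, so that $U$ acts as the bijection $\sigma_x(n) \coloneqq n + c_U(T^n x)$, and the cocycle identity yields $\sigma_x^n(0) = c_{U^n}(x)$. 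By ergodicity of $U$, the $\sigma_x$-orbit of $0$ exhausts $\Z$, so $n \mapsto c_{U^n}(x)$ is a bijection $\Z \to \Z$ for a.e.\ $x$.

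Set $\alpha \coloneqq \int_X c_U\,d\mu$. The first step is to show $\alpha \in \Z$ via Lemma \ref{lem:MassTransport}. Define the \emph{translation number}
\[\tau(\sigma_x) \coloneqq \#\{n \leq -1 : \sigma_x(n) \geq 0\} - \#\{n \geq 0 : \sigma_x(n) \leq -1\},\]
which measures the net rightward flow of $U$ across the gap between positions $-1$ and $0$. Applying mass transport to $f(x,y) = \mathds{1}_{c_U(x) \geq 1}\mathds{1}_{y \in \{Tx, \ldots, T^{c_U(x)}x\}}$ yields $\int c_U^+\,d\mu = \int \#\{n \leq -1 : \sigma_x(n) \geq 0\}\,d\mu$ after the reindexing $m = -n$; a dual computation for $c_U^-$ (using the $T$-path going leftward from $x$ when $c_U(x) \leq -1$) gives $\int c_U^-\,d\mu = \int \#\{n \geq 0 : \sigma_x(n) \leq -1\}\,d\mu$, whence $\alpha = \int \tau(\sigma_x)\,d\mu$. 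A direct calculation using $\sigma_{Tx}(n) = \sigma_x(n+1) - 1$ shows $\tau(\sigma_x)$ is invariant under shift-conjugation, hence $T$-invariant; ergodicity of $T$ then forces $\tau(\sigma_x) \equiv \alpha$ a.e., and since $\tau$ is integer-valued, $\alpha \in \Z$. Birkhoff's ergodic theorem applied to the ergodic $U$ with $c_U \in \LL^1$ simultaneously yields $c_{U^n}(x)/n \to \alpha$ and (by applying it to $U^{-1}$) $c_{U^{-n}}(x)/n \to -\alpha$ a.e.

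The main obstacle is to rule out $\alpha = 0$. Were $\alpha$ zero, one would have $c_{U^n}(x) = o(n)$ a.e., but $n \mapsto c_{U^n}(x)$ is a bijection of $\Z$: fixing any $\varepsilon < 1/2$, the image of $[-N,N]$ would consist of $2N+1$ distinct integers lying in an interval of length at most $2\varepsilon N + O(1)$, which is impossible for $N$ large. Hence $\alpha \neq 0$. Assume $\alpha > 0$ (otherwise apply the argument to $U^{-1}$, whose cocycle has integral $-\alpha$); then $c_{U^n}(x) \to +\infty$ and $c_{U^{-n}}(x) \to -\infty$ for a.e.\ $x$. Combined with the bijectivity of $n \mapsto c_{U^n}(x)$, this forces the symmetric difference $\N \bigtriangleup \{c_{U^n}(x) : n \geq 0\}$ to be finite, i.e.\ $T^\N(x) \bigtriangleup U^\N(x)$ is finite a.e. Theorem \ref{thm: belinskaya commensurates} then supplies a conjugacy between $T$ and $U$ (respectively between $T$ and $U^{-1}$ when $\alpha < 0$), yielding flip-conjugacy.
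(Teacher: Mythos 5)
Your proof is correct, but it takes a genuinely different route from the one in the appendix. Both arguments reduce to Theorem \ref{thm: belinskaya commensurates} and both invoke the mass-transport principle (Lemma \ref{lem:MassTransport}); however, the paper applies mass transport to the indicator of ``$y$ lies $T$-between $x$ and $U(x)$'', deducing that the trajectory $(U^n(x))_{n}$ crosses the point $x$ only finitely often, and then gets eventual one-sidedness from the infiniteness of the $U$-orbits together with ergodicity, without ever identifying a numerical invariant. You instead compute the index $\alpha=\int_X c_U\,d\mu$: mass transport identifies $\alpha$ with a pointwise, integer-valued, $T$-invariant flux, so $\alpha\in\Z$ by ergodicity of $T$; a pigeonhole argument on the bijection $n\mapsto c_{U^n}(x)$ excludes $\alpha=0$; and Birkhoff's theorem (which the paper's proof avoids entirely) converts $\alpha\neq 0$ into linear drift $c_{U^n}(x)\sim\alpha n$, giving the finiteness of $T^\N(x)\bigtriangleup U^\N(x)$. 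Your route essentially rediscovers the index map $I\colon[T]_1\to\Z$ and the fact that ergodic elements have nonzero index, which the paper only cites from \cite{lemaitremeasurableanaloguesmall2018} in the remark following Proposition \ref{prop: ergo is dense in aper}; it yields the stronger quantitative conclusion of linear drift of the cocycle sums, at the price of invoking the pointwise ergodic theorem, whereas the crossing-count argument is softer. Two small remarks: you are right that the statement implicitly requires $U$ and $T$ to have the same orbits (the paper's proof also uses this tacitly in its last step), and the surjectivity of $n\mapsto c_{U^n}(x)$ onto $\Z$ is a consequence of that same-orbit hypothesis rather than of the ergodicity of $U$ alone --- as phrased, your justification would not survive the example $U=T^2$ for an irrational rotation $T$ --- but since you have already assumed equality of orbits this is only a misattribution, not a gap.
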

	\begin{proof}
		Define a $T$-invariant total order $\leq_T$ on each $T$-orbit by setting $x\leq_Ty$ if there is $n\geq 0$ such that $y=T^n(x)$. We will write $x<_Ty$ whenever $x\neq y$ and $x\leq _Ty$. 
		Define
		$f\colon \mathcal R_T\to \N$ by: 
		\[f(x,y)\coloneqq
		\left\{
		\begin{array}{cl}
			1&\text{ if }x\leq_T y<_T U(x)\text{ or }U(x) <_T y \leq_T x,\\
			0&\text{ otherwise.}
		\end{array}
		\right.
		\]
		Let us denote by $c_U$ the $T$-cocycle of $U$. By assumption, $c_U$ is integrable. Remark that $f(x,T^n(x))=1$ if and only if $ 0\leq n < c_U(x)$ or $c_U(x)<n\leq 0$, so $\sum_{n\in\Z} f(x,T^n(x))=\abs{c_U(x)}$.
		We thus have
		\[\int_X\sum_{n\in\Z} f(x,T^n(x))d\mu=\int_X \abs{c_U(x)}d\mu<+\infty.\]
		Using the mass-transport principle (Lemma \ref{lem:MassTransport}), we deduce that $$\int_X\sum_{n\in\Z} f(T^n(x),x)d\mu<+\infty,$$ in particular
		for almost every $x\in X$, the sum $\sum_{n\in\Z} f(T^n(x),x)$ is finite. 
		
		This implies that for almost every $x\in X$, there are only finitely many integers 
		$n\in\mathbb Z$ such that 
		$U^n(x)\leq _T x <_T U^{n+1}(x)$ or $U^{n+1}(x)<_Tx \leq_T U^n(x)$.
        Since the $U$-orbit of almost every point is infinite, for almost every $x$, we must have that either $\lim_{n\to+\infty} c_{U^n}(x)=+\infty$  or $\lim_{n\to+\infty}c_{U^n}(x)=-\infty$. By ergodicity of $U$ and up to replacing $U$ with its inverse, we can assume that for almost all $x\in X$ we have $\lim_{n\to+\infty} c_{U^n}(x)=+\infty$, in particular for all but finitely many $n\geq 0$, we have that $U^n(x)\geq_T x$.
		
		By the symmetric argument and the fact that $T$ and $U$ have the same orbits, for almost all $x\in X$ and for all but finitely many $n\leq 0$, we have that $U^n(x)\leq_T x$ and therefore we must have that  
		\(\{T^n(x)\colon n\geq 0\}\bigtriangleup \{U^n(x)\colon n\geq 0\}\) is finite. The conclusion now follows from Theorem \ref{thm: belinskaya commensurates}.
	\end{proof}
	
	\bibliographystyle{alpha}
	\bibliography{bib}
	
	\Addresses
	
\end{document}